
\documentclass[12pt]{amsart}
\usepackage{amssymb,amscd}
\usepackage{verbatim}

\textwidth 6.5truein
\textheight 8.67truein
\oddsidemargin 0truein
\evensidemargin 0truein
\topmargin 0truein

\let\frak\mathfrak

\def\>{\relax\ifmmode\mskip.666667\thinmuskip\relax\else\kern.111111em\fi}
\def\<{\relax\ifmmode\mskip-.333333\thinmuskip\relax\else\kern-.0555556em\fi}
\def\vsk#1>{\vskip#1\baselineskip}
\def\vv#1>{\vadjust{\vsk#1>}\ignorespaces}
\def\vvn#1>{\vadjust{\nobreak\vsk#1>\nobreak}\ignorespaces}
 \let\alb\allowbreak
\def\fratop{\genfrac{}{}{0pt}1}
\def\satop#1#2{\fratop{\scriptstyle#1}{\scriptstyle#2}}
  \let\ssize\scriptstyle
\let\sssize\scriptscriptstyle

\let\Medskip\medskip
\def\medskip{\par\Medskip}
\let\Bigskip\bigskip
\def\bigskip{\par\Bigskip}

\let\Maketitle\maketitle
\def\maketitle{\Maketitle\thispagestyle{empty}\let\maketitle\empty}

\newtheorem{thm}{Theorem}[section]
\newtheorem{cor}[thm]{Corollary}
\newtheorem{lem}[thm]{Lemma}
\newtheorem{prop}[thm]{Proposition}

\numberwithin{equation}{section}

\theoremstyle{definition}
\newtheorem*{rem}{Remark}
\newtheorem*{example}{Example}

\let\mc\mathcal
\let\nc\newcommand

\let\al\alpha

\let\dl\delta
\let\Dl\Delta
\let\eps\varepsilon

\let\la\lambda

\let\pho\phi
\let\phi\varphi
\let\si\sigma

\let\der\partial
\let\Hat\widehat

\let\ox\otimes
\let\Tilde\widetilde
\let\bra\langle
\let\ket\rangle

\let\ge\geqslant
\let\geq\geqslant
\let\le\leqslant
\let\leq\leqslant

\let\on\operatorname
\let\bi\bibitem
\let\bs\boldsymbol

\def\C{{\mathbb C}}
\def\Z{{\mathbb Z}}
\def\R{{\mathbb R}}
\def\PP{{\mathbb P}}

\def\F{{\mc F}}

\def\+#1{^{\{#1\}}}
\def\lsym#1{#1\alb\dots\relax#1\alb}
\def\lc{\lsym,}

\def\End{\on{End}}

\def\Hom{\on{Hom}}
\def\id{\on{id}}

\def\ii{i,\<\>i}
\def\ij{i,\<\>j}
\def\ik{i,\<\>k}
\def\il{i,\<\>l}
\def\ji{j,\<\>i}
\def\jj{j,\<\>j}
\def\jk{j,\<\>k}
\def\kj{k,\<\>j}
\def\kl{k,\<\>l}

\def\II{I\<\<,\<\>I}

\def\ioi{i+1,\<\>i}

\def\ppo{p,\<\>p+1}
\def\pop{p+1,\<\>p}
\def\pci{p,\<\>i}
\def\pcj{p,\<\>j}
\def\poi{p+1,\<\>i}
\def\poj{p+1,\<\>j}

\def\gl{\mathfrak{gl}}
\def\gln{\mathfrak{gl}_N}

\def\Ugln{U(\gln)}

\def\Yn{Y\<(\gln)}

\def\beq{\begin{equation}}
\def\eeq{\end{equation}}
\def\be{\begin{equation*}}
\def\ee{\end{equation*}}

\nc{\bea}{\begin{eqnarray*}}
\nc{\eea}{\end{eqnarray*}}
\nc{\bean}{\begin{eqnarray}}
\nc{\eean}{\end{eqnarray}}
\nc{\Ref}[1]{{\rm(\ref{#1})}}

\let\ga\gamma
\let\Ga\Gamma

\nc{\Il}{{\mc I_{\bs\la}}}
\nc{\bla}{{\bs\la}}
\nc{\Fla}{\F_\bla}
\nc{\tfl}{{T^*\Fla}}
\nc{\GL}{{GL_n(\C)}}
\nc{\GLC}{{GL_n(\C)\times\C^*}}

\let\aal\al 

\def\zzz{z_1\lc z_n}

\def\Czh{\C[\zzz,h]}

\def\ty{\Tilde Y\<(\gln)}

\def\IMI{{I^{\<\>\min}}}

\let\sd s 

\def\st{\tilde s}

\def\ib{\bs i}
\def\jb{\bs j}

\def\iib{\ib,\<\>\ib}
\def\ijb{\ib,\<\>\jb}

\def\jib{\jb,\<\>\ib}

\def\zb{\bs z}

\def\zzii{z_1\lc z_{i+1},z_i\lc z_n}

\def\ip{\<\>i\>\prime}
\def\ipi{\>\prime\<\>i}

\def\Xin{X^\infty}

\def\Xk{X^\kk}

\def\ddk_#1{\kk_{#1}\<\>\frac\der{\der\<\>\kk_{#1}}}

\def\bul{\mathbin{\raise.2ex\hbox{$\sssize\bullet$}}}
\def\intt{\mathchoice
{\mathop{\raise.2ex\rlap{$\,\,\ssize\backslash$}{\intop}}\nolimits}
{\mathop{\raise.3ex\rlap{$\,\sssize\backslash$}{\intop}}\nolimits}
{\mathop{\raise.1ex\rlap{$\sssize\>\backslash$}{\intop}}\nolimits}
{\mathop{\rlap{$\sssize\<\>\backslash$}{\intop}}\nolimits}}

\let\kk q 
\let\kp\kappa 
\let\cc c
\def\kkk{\kk_1\lc\kk_N}

\let\Ko K
\def\Kh{\Hat\Ko}
\def\zzip{z_1\lc z_{i-1},z_i\<-\kp,z_{i+1}\lc z_n}

\def\GZ/{Gelfand-Zetlin}
\def\KZ/{{\slshape KZ\/}}
\def\qKZ/{{\slshape qKZ\/}}
\def\XXX/{{\slshape XXX\/}}

\def\zz{{\bs z}}
\def\qq{{\bs q}}
\def\TT{{\bs t}}

\def\Sym{\on{Sym}}

\nc{\A}{{\mc C}}

\def\GG{{\bs \Ga}}
\def\II{{\mc I}}
\def\XX{{\mc X}}
\def\a{{\frak{a}}}
\def\CC{{\frak{C}}}
\def\St{{\on{Stab}}}

\def\Czh{{(\C^N)^{\otimes n}\otimes\C(\zz;h)}}

\begin{document}

\hrule width0pt
\vsk->

\title[Partial flag varieties, stable envelopes and weight functions]
{Partial flag varieties, stable envelopes \\ and weight functions}

\author
[R.\,Rim\'anyi, V.\,Tarasov, A.\,Varchenko]
{ R.\,Rim\'anyi$\>^{\star}$,
V.\,Tarasov$\>^\circ$, A.\,Varchenko$\>^\diamond$}

\maketitle

\begin{center}
{\it $^{\star\,\diamond}\<$Department of Mathematics, University
of North Carolina at Chapel Hill\\ Chapel Hill, NC 27599-3250, USA\/}

\vsk.5>
{\it $\kern-.4em^\circ\<$Department of Mathematical Sciences,
Indiana University\,--\>Purdue University Indianapolis\kern-.4em\\
402 North Blackford St, Indianapolis, IN 46202-3216, USA\/}

\vsk.5>
{\it $^\circ\<$St.\,Petersburg Branch of Steklov Mathematical Institute\\
Fontanka 27, St.\,Petersburg, 191023, Russia\/}
\end{center}

{\let\thefootnote\relax
\footnotetext{\vsk-.8>\noindent
$^\star\<${\sl E\>-mail}:\enspace rimanyi@email.unc.edu\>,
supported in part by NSF grant DMS-1200685\\
$^\circ\<${\sl E\>-mail}:\enspace vt@math.iupui.edu\>, vt@pdmi.ras.ru\>,
supported in part by NSF grant DMS-0901616\\
$^\diamond\<${\sl E\>-mail}:\enspace anv@email.unc.edu\>,
supported in part by NSF grant DMS-1101508}}

\begin{abstract}
We consider the cotangent bundle $T^*\F_\bla$ of a $GL_n$ partial flag variety,
$\bla= (\la_1,\dots,\la_N), |\bla|=\sum_i\la_i=n$,
and the torus $T=(\C^\times)^{n+1}$ equivariant cohomology $H^*_T(T^*\F_\bla)$. In \cite{MO},
a Yangian module structure was introduced on $\oplus_{|\bla|=n}
H^*_T(T^*\F_\bla)$. We identify this Yangian module structure with the Yangian module structure
introduced in \cite{GRTV}. This identifies the operators of quantum multiplication by divisors on
$H^*_T(T^*\F_\bla)$, described in \cite{MO}, with the action of the dynamical Hamiltonians
from \cite{TV2, MTV1, GRTV}. To construct these identifications we provide a formula for the stable envelope maps,
associated with the partial flag varieties and introduced in \cite{MO}. The formula is in terms of the Yangian weight functions
introduced in \cite{TV1}, c.f. \cite{TV3, TV4}, in order to construct q-hypergeometric solutions of \qKZ/ equations.

\end{abstract}

\bigskip

\begin{center}
{\it To the memory of I.M.\,Gelfand (1913-2009)}
\end{center}

\bigskip

{\small \tableofcontents }

\setcounter{footnote}{0}
\renewcommand{\thefootnote}{\arabic{footnote}}

\section{Introduction}
In \cite{MO}, D.\,Maulik and A.\,Okounkov develop a general theory connecting quantum groups and equivariant
quantum cohomology of Nakajima quiver varieties, see \cite{N1, N2}. In this paper, we consider the constructions
and results of that general theory applied to the cotangent bundles of $GL_n$ partial flag varieties. We identify the objects and results
from \cite{MO}
with known objects and results associated with the Yangian $Y(\gln)$, see \cite{TV1, TV2, TV3, TV4, MTV1, GRTV}.

More precisely, we consider the cotangent bundle $T^*\F_\bla$ of a $GL_n$ partial flag variety,
$\bla= (\la_1,\dots,\la_N), |\bla|=\sum_i\la_i=n$,
and the torus $T=(\C^\times)^{n+1}$ equivariant cohomology $H^*_T(T^*\F_\bla)$. In \cite{MO},
a Yangian module structure was introduced on $\oplus_{|\bla|=n}
H^*_T(T^*\F_\bla)$. We identify this Yangian module structure with the Yangian module structure
introduced in \cite{GRTV}. This identifies the operators of quantum multiplication by divisors on
$H^*_T(T^*\F_\bla)$, described in \cite{MO}, with the action of the dynamical Hamiltonians
from \cite{TV2, MTV1, GRTV}. To construct these identifications we provide a formula for the stable envelope maps,
associated with the partial flag varieties and introduced in \cite{MO}. The formula is in terms of the Yangian weight functions
introduced in \cite{TV1}, c.f. \cite{TV3, TV4}, in order to construct q-hypergeometric solutions of \qKZ/ equations.

\smallskip
In Section \ref{sec EQ}, we follow \cite{MO} and define the stable envelope maps associated with partial flag varieties,
$\St_\si : (\C^N)^{\otimes n}\otimes\C[z_1,\dots,z_n;h] \to H^*_T(T^*\F_\bla)$, $\si\in S_n$.
In Section \ref{sec Weight functions}, we introduce weight functions and
the cohomological weight function maps
$[W_\si] : (\C^N)^{\otimes n}\otimes\C[z_1,\dots,z_n;h] \to H^*_T(T^*\F_\bla)$, $\si\in S_n$.
In Section \ref{sec Stable envelope maps and weight function maps} we prove
our main result, Theorem \ref{main thm}, which relates the stable envelope maps and
the cohomological weight function maps,
\bea
[W_\si] = c_\bla \circ\on{Stab}_\sigma,
\eea
where $c_\bla$ is the operator of multiplication by an element $c_\bla(\bs\Theta)\in H^*_T(T^*\F_\bla)$ defined in
\Ref{c bla}. The element $c_\bla(\bs\Theta)$ is not a zero-divisor.
The inverse maps to the stable envelope maps for  $GL_n$ partial flag varieties were considered in \cite[Formulas (5.9), (5.10)]{GRTV}.
One of those maps $\nu$ was reintroduced in Section \ref{pr st yang} where we prove that
\bea
\nu\circ\St_{\on{id}} = \on{Id},
\eea
here $\on{id}\in S_n$ is the identity.

In Section \ref{sec: ortho}, we describe the orthogonality relations for the stable envelope maps
$\St_{\on{id}}$ and $\St_{\si_0}$, where $ \si_0\in S_n$ is the longest permutation.
The orthogonality relations are analogues of the orthogonality relations for Schubert cycles corresponding
to dual Young diagrams.

In Section \ref{sec Yang}, our Theorem \ref{thm stab yang} and Corollary \ref{Y MO} say that the Yangian module structure on
$\oplus_{|\bla|=n} H^*_T(T^*\F_\bla)$, introduced in \cite{MO},
coincides with the Yangian structure introduced in \cite{GRTV} and Section \ref{sec cohom and Yang}.
We introduce the dynamical Hamiltonians and trigonometric dynamical connection in Section \ref{Bethe subalgebras sec}.
In Corollary \ref{cor qm = dh}, we identify the operators of quantum multiplication by divisors $D_i, i=1,\dots,N$, on
$H^*_T(T^*\F_\bla)$ with the action of the dynamical Hamiltonian $X^q_{\bla,i}, i=1,\dots,N$.
This identifies the quantum connection on $H^*_T(T^*\F_\bla)$ with the trigonometric dynamical connection.
In Section \ref{Bethe subalgebras sec}, we discuss also the \qKZ/ difference connection, compatible with the trigonometric dynamical connection, see \cite{TV2}.
The \qKZ/ difference connection on $H^*_T(T^*\F_\bla)$
corresponds to the shift operator difference connection introduced in \cite{MO}.

\smallskip
This paper is motivated by two goals. The first is to relate the stable envelope maps and the cohomological weight function
maps. The second goal is to identifies the quantum connection on $H^*_T(T^*\F_\bla)$ with the trigonometric dynamical connection.
The flat sections of the trigonometric dynamical connection and the associated \qKZ/ difference connection were constructed in
\cite{SV, MV, TV2} in the form of multidimensional hypergeometric integrals. The results of this paper allow us to construct
flat sections of the quantum connection and the shift operator difference connection on $H^*_T(T^*\F_\bla)$ in the form of
multidimensional hypergeometric integrals. Such a presentation of flat sections manifests the Landau-Ginzburg mirror
symmetry for the cotangent bundles of partial flag varieties.

\smallskip
The authors thank D. Maulik and A. Okounkov for answering questions on \cite{MO}.

\section{Stable envelopes}
\label{sec EQ}
In this section we follow \cite{MO} and define stable envelopes associated with partial flag varieties.

\subsection{Partial flag varieties}
\label{sec Partial flag varieties}
Fix natural numbers $N, n$. Let \,$\bla\in\Z^N_{\geq 0}$, \,$|\bla|=\la_1+\dots+\la_N =n$.
Consider the partial flag variety
\;$\Fla$ parametrizing chains of subspaces
\be
0\,=\,F_0\subset F_1\lsym\subset F_N =\,\C^n
\ee
with \;$\dim F_i/F_{i-1}=\la_i$, \;$i=1\lc N$.
Denote by \,$\tfl$ the cotangent bundle of \;$\Fla$, and let $\pi: \tfl \to \Fla$ be the projection of the bundle.
Denote
\bea
\XX_n = \cup_{|\bla|=n} T^*\F_\bla .
\eea
\begin{example}
If $n=1$, then $\bla=(0,\dots,0,1_i,0,\dots,0)$, $T^*\F_\bla$ is a point and $\XX_1$ is the union of $N$ points.

If $n=2$ then $\bla= (0,\dots,0,1_i,0, \dots,0,1_j,0,\dots,0)$ or $\bla= (0,\dots,0,2_i,0,\dots,0)$.
In the first case $T^*\F_\bla$ is the cotangent bundle of projective line, in the second case $T^*\F_\bla$ is a point.
Thus $\XX_2$ is the union of $N$ points and $N(N-1)/2$ copies of the cotangent bundle of projective line.

\end{example}

Let $I=(I_1\lc I_N)$ be a partition of $\{1\lc n\}$ into disjoint subsets
$I_1\lc I_N$. Denote $\Il$ the set of all partitions $I$ with
$|I_j|=\la_j$, \;$j=1,\dots N$.

Let $u_1,\dots,u_n$ be the standard basis of $\C^n$.
For any $I\in\Il$, let $x_I\in \F_\bla$ be the point corresponding to the coordinate flag
$F_1\subset\dots\subset F_N$, where $F_i$
\,is the span of the standard basis vectors \;$u_j\in\C^n$ with
\,$j\in I_1\lsym\cup I_i$. We embed $\F_\bla$ in $T^*\F_\bla$ as the zero section and consider
the points $x_I$ as points of $T^*\F_\bla$.

\subsection{Schubert cells}

For any $\sigma\in S_n$, we consider the
coordinate flag in $\C^n$,
\bea
V^{\sigma}\ : \ 0\,=\,V_0 \subset V_1\subset\dots\subset V_n=\,\C^n
\eea
where $V_i$ is the span of $u_{\sigma(1)},\dots,u_{\sigma(i)}$. For $I\in\Il$ we define the {\it Schubert cell}
\bea
\Omega_{\sigma,I}=\{ F\in \F_\bla\ |
\dim (F_p\cap V^\sigma_q) = \#\{ i \in I_1\cup \ldots \cup I_p \ |\ \sigma^{-1}(i)\leq q\} \ \forall p\leq N, \forall q\leq n \}.
\eea

\begin{lem}
The Schubert cell $\Omega_{\sigma,I}$ is an affine space of dimension
\bea
\ell_{\sigma,I}=\#\{(i,j)\in \{1,\ldots,n\}^2\ |\ \sigma(i)\in I_a, \sigma(j)\in I_b, a<b, i>j\}.
\eea
For a fixed $\sigma$ the flag manifold is the disjoint union of the cells $\Omega_{\sigma,I}$. We have $x_I\in \Omega_{\sigma,I}$.
\end{lem}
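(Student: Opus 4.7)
The plan is to reduce to the case $\sigma=\on{id}$ by a change of basis and then apply the standard column-echelon normal form argument for partial flag Schubert cells. First I would verify $x_I\in\Omega_{\sigma,I}$ directly: for the coordinate flag $x_I$ the subspace $F_p$ is spanned by $\{u_j : j\in I_1\cup\dots\cup I_p\}$, so $F_p\cap V^\sigma_q$ is spanned by exactly those $u_j$ with $j\in I_1\cup\dots\cup I_p$ and $\sigma^{-1}(j)\le q$, which matches the required dimension formula.

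For the cell structure, I would pass to the new basis $\Tilde u_k = u_{\sigma(k)}$, so that $V^\sigma_q$ becomes the span of $\Tilde u_1,\dots,\Tilde u_q$ and the partition $I$ is replaced by $I'_p=\sigma^{-1}(I_p)$. It then suffices to describe $\Omega_{\on{id},I'}\subset\Fla$ and count its dimension. The cell condition forces $\dim(F_p\cap V^{\on{id}}_k)-\dim(F_p\cap V^{\on{id}}_{k-1})=1$ exactly when $k\in I'_1\cup\dots\cup I'_p$, which produces a canonical pivot basis: for each $k\in I'_p$ there is a unique vector $v_k\in F_p$ of the form $v_k=\Tilde u_k+\sum_{j<k}c_{jk}\Tilde u_j$, and the vectors $v_j$ with $j<k$ whose pivot already lies in $F_p$ (those with $j\in I'_1\cup\dots\cup I'_p$) can be used to normalize the corresponding coefficients $c_{jk}$ to zero.

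The coefficients that remain free are then $c_{jk}$ with $j<k$, $k\in I'_a$, $j\in I'_b$, $a<b$. Counting these and translating via $i\mapsto\sigma(i)$ recovers the formula for $\ell_{\sigma,I}$, and the normal form simultaneously exhibits $\Omega_{\sigma,I}$ as an affine space of this dimension. The disjoint union statement follows because, for any $F\in\Fla$, the function $(p,q)\mapsto\dim(F_p\cap V^\sigma_q)$ determines the sets $\sigma^{-1}(I_1\cup\dots\cup I_p)$ from its jump locations in $q$ (the increments are always $0$ or $1$), hence determines $I$ uniquely from $F$.

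The main obstacle I anticipate is not conceptual but combinatorial: carefully justifying which coefficients $c_{jk}$ can be normalized to zero and which remain free, and verifying that the resulting count matches the inversion-type formula $\ell_{\sigma,I}$ after the $\sigma$-reindexing $I'_p=\sigma^{-1}(I_p)$.
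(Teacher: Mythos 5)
Your proposal is correct, and it is in substance the paper's own proof: the paper simply cites the well-known structure theory of Schubert cells (Fulton--Pragacz, Sect.\ 2.2), and your reduction to $\sigma=\on{id}$ followed by the column-echelon normal form with pivots indexed by $I'_p=\sigma^{-1}(I_p)$ is exactly that standard argument. Your free-coefficient count ($j<k$, $k\in I'_a$, $j\in I'_b$, $a<b$) translates correctly under $i\mapsto\sigma(i)$ to the stated inversion formula for $\ell_{\sigma,I}$, and the jump-function argument settles the disjoint-union claim.
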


\begin{proof}
The structure of Schubert cells is well known, see e.g. \cite[Sect.2.2]{fp}.
\end{proof}

\subsection{Equivariant cohomology}
\label{sec:equiv}

Denote $G=GL_n(\C)\times \C^\times$.
Let \,$A\!\subset GL_n(\C)$ \,be the torus of diagonal matrices.
Denote $T=A\times\C^\times$ the subgroup of $G$.

The groups \,$A\!\subset GL_n$ act on \;$\C^n\<$ and hence on \,$\tfl$.
Let the group \;$\C^\times\<$ act on \,$\tfl$ by multiplication in each fiber.
We denote by $-h$ its $\C^\times$-weight.

\vsk.2>
We consider the equivariant cohomology algebras $H^*_{T}(\tfl;\C)$ and
\bea
H_T^*(\XX_n)=
\oplus_{|\bla|=n} H^*_{T}(\tfl;\C).
\eea
Denote by $\Ga_i=\{\ga_{i,1}\lc\ga_{i,\la_i}\}$ the set of
the Chern roots of the bundle over $\Fla$ with fiber $F_i/F_{i-1}$.
Let \;$\GG=(\Ga_1\<\>\lsym;\Ga_N)$. Denote by $\zb=\{\zzz\}$ the Chern roots
corresponding to the factors of the torus $T$.
Then
\vvn-.2>
\bea
\label{Hrel}
H^*_{T}(\tfl)
\, = \,\C[\GG]^{S_{\la_1}\times\dots\times S_{\la_N}}
\otimes \C[\zz]\otimes\C[h]
\>\Big/\Bigl\bra\,
\prod_{i=1}^N\prod_{j=1}^{\la_i}\,(u-\ga_{\ij})\,=\,\prod_{a=1}^n\,(u-z_a)
\Bigr\ket\,.
\eea
The cohomology
$H^*_{T}(\tfl)$ is a module over $H^*_{T}({pt};\C)=\C[\zz]\otimes\C[h]$.

\begin{example}
If $n=1$, then
\bea
H_T^*(\XX_1) = \oplus_{i=1}^N H_T^*(T^*\F_{(0,\dots,0,1_i,0,\dots,0)})
\eea
is naturally isomorphic to $\C^N\otimes\C[z_1;h]$ with basis
$v_i=(0,\dots,0,1_i,0,\dots,0)$, $i=1,\dots,N$.
\end{example}

For $i=1,\dots,N$, denote $\la^{(i)}=\la_1+\dots+\la_i$. Denote $\Theta_i=\{\theta_{i,1}, \dots, \theta_{i,\la^{(i)}}\}$
the Chern roots of the bundle over $\F_\bla$ with fiber $F_i$. Let $\bs \Theta=(\Theta_1,\dots,\Theta_{N})$. The relations
\bea
\prod_{j=1}^{\la^{(i)}}(u-\theta_{i,j}) = \prod_{\ell=1}^i\prod_{j=1}^{\la_i}\,(u-\ga_{\ij}), \qquad i=1,\dots,N,
\eea
define the homomorphism
\bea
\C[\bs \Theta]^{S_{\la^{(1)}}\times\dots\times S_{\la^{(N)}}}
\otimes \C[\zz]\otimes\C[h]\to H^*_{T}(\tfl).
\eea

\subsection{Fixed point sets}

\vsk.2>
The set \,$(\tfl)^{A}\!$ of fixed points of the torus $A$ action is $(x_I)_{I\in\Il}$. We have
\bea
(\XX_n)^A = \XX_1\times\dots\times\XX_1.
\eea
The cohomology algebra $H_T^*((\XX_n)^A)$ is naturally isomorphic to
\bea
(\C^N)^{\otimes n}\otimes \C[\zz;h].
\eea
This isomorphism sends the identity element $1_I\in H^*_T(x_I)$ to the vector
\bean
\label{v_I}
v_I = v_{i_1}\otimes\dots\otimes v_{i_n},
\eean
where $i_j =i$ if $i_j\in I_i$.

\subsection{Chamber decomposition}
Let $\a$ be the Lie algebra of $A$. The cocharacters $\eta :\C^\times \to A$ form a lattice of rank $n$.
We define
\bea
\a_\R = \on{Cochar}(A)\otimes_\Z\R\subset\a .
\eea
Each weight of $A$ defines a hyperplane in $\a_\R$.

Let $z_1,\dots,z_n$ be the standard basis of the dual space $\a^*$, as in Section \ref{sec:equiv}. Then the
{\it torus roots} are the $A$-weights $\al_{i,j}=z_i-z_j$ for all $i\ne j$.
The root hyperplanes partition $\a_\R$ into open chambers
\bea
\a_\R - \cup\al_{i,j}^\perp = \cup_{\sigma\in S_n} \CC_\sigma.
\eea
The chamber $\CC_\sigma$ consists of points $p\in\a_\R$ such that
$z_{\sigma(1)}(p)>\dots>z_{\sigma(n)}(p)$.

\subsection{Stable leaves}
\label{sec: St Leaves}

Let $\CC$ be a chamber. We say that $x\in\XX_n$ is $\CC$-stable
if the limit $\lim_{z\to 0}\eta(z)\cdot x \in (\XX_n)^A$ exists for one (equivalently, all) cocharacters
$\eta\in\CC$. This limit is independent of the choice of $\eta\in\CC$ and will be denoted by $\lim_\CC x$.

Given a point $x_I\in(\XX_n)^A$, we denote by $\on{Leaf}_{\CC,I} = \{x\ |\ \lim_\CC x = x_I\}$ the stable leaf of $x_I$.
For $\sigma\in S_n$, $I\in\Il$, we denote by $C\Omega_{\sigma,I} \subset T^*\F_\bla$ the conormal bundle of the Schubert cell
$\Omega_{\sigma,I}$.

\begin{lem} \label{lem:leaf_conormal}
We have $\on{Leaf}_{\CC_\sigma,I}= C\Omega_{\sigma,I}$.
\end{lem}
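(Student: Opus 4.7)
The plan is to prove the equality in two steps: first, a local calculation in $A$-equivariant weight coordinates around the torus-fixed point $x_I$ that identifies the germs of both sides; then, a globalization using $A$-invariance together with the fact that the $\eta$-flow of any point in either set eventually enters any fixed neighborhood of $x_I$.

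For the local step, I would pick an $A$-equivariant slice around $x_I$ (a Luna / Bialynicki--Birula linearization) to obtain weight-vector coordinates $y_1,\ldots,y_m$ on $\Fla$ near $x_I$ with $A$-weights $\al_1,\ldots,\al_m$, and dual cotangent momentum coordinates $p_1,\ldots,p_m$ of weights $-\al_1,\ldots,-\al_m$. Fix $\eta\in\CC_\sigma$. The dimension count yielding $\ell_{\sigma,I}$ should show that $T_{x_I}\Omega_{\sigma,I}$ is exactly the positive-$\eta$-weight subspace of $T_{x_I}\Fla$, so that locally
\begin{equation*}
\Omega_{\sigma,I}=\{y_i=0:\al_i(\eta)<0\},\qquad C\Omega_{\sigma,I}=\{y_i=0,\;p_j=0:\al_i(\eta)<0,\ \al_j(\eta)>0\}.
\end{equation*}
Since $\eta(s)$ acts by $(y_i,p_j)\mapsto(s^{\al_i}y_i,s^{-\al_j}p_j)$, the set of points with a finite limit as $s\to 0$ is cut out by precisely the same equations, so the local stable leaf of $x_I$ agrees with $C\Omega_{\sigma,I}$ on the slice.

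For the globalization I would argue both inclusions symmetrically. Given $(f,\xi)\in\on{Leaf}_{\CC_\sigma,I}$, the projection $f$ lies in the Bialynicki--Birula cell of $x_I$ in $\Fla$, which is $\Omega_{\sigma,I}$; for $s$ small enough the flowed point $\eta(s)\cdot(f,\xi)$ lies in the slice neighborhood, where the germ equality forces it into $C\Omega_{\sigma,I}$, and $A$-invariance of $C\Omega_{\sigma,I}$ recovers $(f,\xi)\in C\Omega_{\sigma,I}$. Conversely, for $(f,\xi)\in C\Omega_{\sigma,I}$ the projection $f\in\Omega_{\sigma,I}$ already satisfies $\eta(s)\cdot f\to x_I$, so $\eta(s)\cdot(f,\xi)$ enters the slice for small $s$ and therefore lies in the local stable leaf, which gives $(f,\xi)\in\on{Leaf}_{\CC_\sigma,I}$ by $A$-invariance of $\on{Leaf}_{\CC_\sigma,I}$.

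The main technical obstacle I anticipate is not the computation itself but the justification of the $A$-equivariant linearization around $x_I$ in the quasi-projective setting, and checking that the $\eta$-flow really drives an arbitrary point of $C\Omega_{\sigma,I}$ into any preassigned slice neighborhood; both facts are standard once one exploits that $\Omega_{\sigma,I}$ is itself an affine Bialynicki--Birula cell contracting to $x_I$, but they need to be invoked cleanly.
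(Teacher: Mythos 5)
Your overall strategy (weight-coordinate linearization at $x_I$ plus sign bookkeeping relative to $\CC_\sigma$) is the right family of argument, and your forward inclusion $\on{Leaf}_{\CC_\sigma,I}\subset C\Omega_{\sigma,I}$ can be made to work, with one small repair: membership of a single flowed point $\eta(s_0)\cdot x$ in the slice does not by itself force the negative-weight coordinates to vanish; you need that the whole tail of the trajectory stays in the slice (which convergence to $x_I$ does give you), and then linearity of the flow on the slice shows a nonzero negative coordinate would force the trajectory to leave, a contradiction. The genuine gap is in the converse inclusion, at exactly the point you flagged but did not resolve. From $(f,\xi)\in C\Omega_{\sigma,I}$ you get $\eta(s)\cdot f\to x_I$ in the base, but this says nothing about the covector: the fiber component of $\eta(s)\cdot(f,\xi)$ could a priori blow up as $s\to 0$, so the flowed point need not enter any preassigned slice neighborhood. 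This is not a pedantic worry --- it is precisely the dichotomy the lemma turns on: points of $\pi^{-1}(\Omega_{\sigma,I})$ \emph{outside} the conormal also have contracting base projection, yet are unstable exactly because their fiber components sit in $\eta$-negative directions. So \emph{``the base is a contracting affine cell''} cannot suffice to drive the point into the slice; you need to know the $A$-weights of the conormal directions over the whole cell, not merely at $x_I$.

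The paper closes this by replacing your local-plus-globalization scheme with a single global statement: there is a natural $A$-invariant identification of all of $\pi^{-1}(\Omega_{\sigma,I})$ with the linear representation $\C^{\ell_{\sigma,I}}\oplus\C^{\dim\F_\bla}$, sending $x_I$ to the origin, with explicit weights (the cell directions carry $z_{\sigma(j)}-z_{\sigma(i)}$ for $\sigma(i)\in I_a$, $\sigma(j)\in I_b$, $a<b$, $i>j$; the fiber carries $z_{\sigma(i)}-z_{\sigma(j)}$ for $a<b$), and the fiber then splits as $T_1^*\oplus T_2^*$ so that $C\Omega_{\sigma,I}=\C^{\ell_{\sigma,I}}\oplus T_1^*$ carries only $\eta$-positive weights while $T_2^*$ carries only $\eta$-negative ones. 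With global linear coordinates both inclusions are immediate: $C\Omega_{\sigma,I}$ contracts to $x_I$ outright, any point of $\pi^{-1}(\Omega_{\sigma,I})\setminus C\Omega_{\sigma,I}$ has a diverging coordinate and hence no limit, and running the same analysis over every cell $\Omega_{\sigma,J}$ pins down $\on{Leaf}_{\CC_\sigma,I}$ exactly. (As a bonus, the explicit weight lists are reused later in the paper to define the polarization classes $e_{\sigma,I,\pm}$.) If you supply the $A$-equivariant trivialization of $T^*\F_\bla$ restricted to $\Omega_{\sigma,I}$ --- which is the one fact your proposal is missing, and which renders the slice machinery unnecessary --- your proof collapses into the paper's; without it, the converse inclusion does not close.
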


\begin{proof} Consider the natural $A$-invariant identification of
$\pi^{-1}(\Omega_{\sigma,I})$ with $\C^{\ell_{\sigma,I}} \oplus \C^{\dim \F_\bla}$
mapping $x_I$ to the origin. The weights on the first component $\C^{\ell_{\sigma,I}}$ are
\begin{equation} \label{eqn:w_S}
z_{\sigma(j)}-z_{\sigma(i)}\qquad\text{for} \qquad \sigma(i)\in I_a, \sigma(j)\in I_b, a<b, i>j,
\end{equation}
and the weights on the second component $\C^{\dim \F_\bla}$ are
\begin{equation} \label{eqn:w_T}
z_{\sigma(i)}-z_{\sigma(j)}\qquad\text{for} \qquad \sigma(i)\in I_a, \sigma(j)\in I_b, a<b.
\end{equation}
Consider the splitting
$$\C^{\ell_{\sigma,I}} \oplus \C^{\dim \F_\bla} = \underbrace{\C^{\ell_{\sigma,I}} \oplus T^*_1}_{C\Omega_{\sigma,I}} \oplus T^*_2,$$
where $T^*_1$ is the sum of weight subspaces with weights
\begin{equation} \label{eqn:w_T1}
z_{\sigma(i)}-z_{\sigma(j)}\qquad\text{for} \qquad \sigma(i)\in I_a, \sigma(j)\in I_b, a<b, i<j,
\end{equation}
and $T^*_2$ is the sum of weight subspaces with weights
\begin{equation} \label{eqn:w_T2}
z_{\sigma(i)}-z_{\sigma(j)}\qquad\text{for} \qquad \sigma(i)\in I_a, \sigma(j)\in I_b, a<b, i>j.
\end{equation}
For a cocharacter $\eta\in \CC_\sigma$ the weights in (\ref{eqn:w_S}) and (\ref{eqn:w_T1})
are all positive and the weights in (\ref{eqn:w_T2}) are all negative.

Therefore, a point in $\pi^{-1}(\Omega_{\sigma,I})$ has $\lim_{\CC} x=x_I$ if it belongs to $C\Omega_{\sigma,I}$ and
is not $\CC$-stable if it does not belong to $C\Omega_{\sigma,I}$. Applying the same argument for all other
$J\in \II_\lambda$, we see that $\on{Leaf}_{\CC_\sigma,I} \subset \pi^{-1}(\Omega_{\sigma,I})$,
and moreover, $\on{Leaf}_{\CC_\sigma,I}=C\Omega_{\sigma,I}$.
\end{proof}

For $\sigma\in S_n$, we define the {\it geometric} partial ordering
on the set $\Il$.
For $I,J\in\Il$, we say that $J\leq_g I$ if $x_J$ lies in the closure of $\on{Leaf}_{\sigma,I}$.

We also define the {\it combinatorial} partial ordering.
For $I,J\in\Il$, let
\bea
\sigma^{-1}(\cup_{\ell=1}^k I_\ell) =\{a_1^k<\dots< a_{\la^{(k)}}^k\},
\qquad
\sigma^{-1}(\cup_{\ell=1}^k J_\ell) =\{b_1^k<\dots< b_{\la^{(k)}}^k\}
\eea
for $k=1,\dots,N-1$.
We say that $J\leq_c I$ if $b_i^k\leq a_i^k$ for $k=1,\dots,N-1$, $i=1,\dots,\la^{(k)}$.

\begin{lem}
The geometric and combinatorial partial orderings are the same.
\end{lem}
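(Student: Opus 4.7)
The plan is to use Lemma \ref{lem:leaf_conormal} to reduce the question to a statement about the Bruhat-type closure order on Schubert cells inside $\Fla$. Since $\on{Leaf}_{\CC_\sigma,I}=C\Omega_{\sigma,I}$, and the point $x_J$ lies in the zero section of $\tfl$, I would first observe that $x_J\in\overline{C\Omega_{\sigma,I}}$ if and only if $x_J\in\overline{\Omega_{\sigma,I}}$ inside $\Fla$: the projection $\pi$ sends $\overline{C\Omega_{\sigma,I}}$ onto $\overline{\Omega_{\sigma,I}}$ and is the identity on the zero section (giving the forward direction), while on the other hand $\overline{C\Omega_{\sigma,I}}$ contains the zero section over $\overline{\Omega_{\sigma,I}}$ (giving the reverse direction).

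Next I would invoke the standard description of Schubert cell closures (see e.g.\ \cite[Sect.\,2.2]{fp}): a flag $F$ lies in $\overline{\Omega_{\sigma,I}}$ if and only if
\begin{equation*}
\dim\bigl(F_p\cap V^\sigma_q\bigr)\ \geq\ r_{p,q}(I)\ :=\ \#\{\,i\in I_1\cup\dots\cup I_p \mid \sigma^{-1}(i)\leq q\,\}
\end{equation*}
for every $p\leq N$ and $q\leq n$. Evaluating this condition at the torus-fixed flag $x_J$, whose $p$-th subspace is spanned by the basis vectors indexed by $J_1\cup\dots\cup J_p$, one obtains
\begin{equation*}
\dim\bigl((x_J)_p\cap V^\sigma_q\bigr)\ =\ \#\{\,b^p_\ell\leq q\,\},\qquad r_{p,q}(I)\ =\ \#\{\,a^p_\ell\leq q\,\},
\end{equation*}
using the notation of the statement. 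Thus $x_J\in\overline{\Omega_{\sigma,I}}$ if and only if $\#\{\ell:b^p_\ell\leq q\}\geq \#\{\ell:a^p_\ell\leq q\}$ for all $p$ and $q$.

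Finally I would appeal to the elementary combinatorial equivalence: for two increasing sequences $b^p_1<\dots<b^p_{\la^{(p)}}$ and $a^p_1<\dots<a^p_{\la^{(p)}}$ in $\{1,\dots,n\}$, the dominance inequalities $\#\{\ell:b^p_\ell\leq q\}\geq\#\{\ell:a^p_\ell\leq q\}$ for all $q$ are equivalent to the pointwise inequalities $b^p_i\leq a^p_i$ for all $i$. Running this equivalence for each $p=1,\dots,N-1$ identifies $x_J\in\overline{\on{Leaf}_{\CC_\sigma,I}}$ with the combinatorial condition $J\leq_c I$, which is the desired equality of orderings.

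The only mild obstacle is justifying the description of $\overline{\Omega_{\sigma,I}}$ by rank inequalities in the partial flag setting (rather than the complete flag setting), but this is the classical Ehresmann-Chevalley description and may be cited. All remaining steps are bookkeeping with the indexing sets $\{a^k_\ell\},\{b^k_\ell\}$.
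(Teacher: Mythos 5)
Your proof is correct, but it takes a genuinely different route from the paper's. The paper disposes of the lemma in one line by citing the Tableau Criterion for Bruhat order \cite[Thm.~2.6.3]{BB}, i.e.\ it treats as known both the identification of the closure order of Schubert cells with Bruhat order on the cosets $S_n/(S_{\la_1}\!\times\cdots\times S_{\la_N})$ and the combinatorial characterization of that order by the pointwise inequalities $b^k_i\leq a^k_i$. You instead argue geometrically from scratch: (1) the reduction $x_J\in\overline{\on{Leaf}_{\CC_\sigma,I}}=\overline{C\Omega_{\sigma,I}}$ iff $x_J\in\overline{\Omega_{\sigma,I}}$, which is airtight as stated (indeed $\pi(\overline{C\Omega_{\sigma,I}})\subset\overline{\Omega_{\sigma,I}}$ since $\overline{C\Omega_{\sigma,I}}\subset\overline{\pi^{-1}(\Omega_{\sigma,I})}\subset\pi^{-1}(\overline{\Omega_{\sigma,I}})$, and the zero section over $\overline{\Omega_{\sigma,I}}$ lies in $\overline{C\Omega_{\sigma,I}}$, so in fact $\overline{C\Omega_{\sigma,I}}\cap\F_\bla=\overline{\Omega_{\sigma,I}}$); (2) the Ehresmann--Chevalley description of $\overline{\Omega}_{\sigma,I}$ by rank inequalities, correctly cited and correctly evaluated at the coordinate flag $x_J$, where both sides become counting functions of the sorted sequences $\{a^p_\ell\}$, $\{b^p_\ell\}$; and (3) the elementary dominance equivalence, whose converse direction (take $q=a^p_i$ to force $b^p_i\leq a^p_i$) is the standard argument and is exactly the content of the tableau criterion in this setting. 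What your approach buys is self-containedness and transparency: it makes explicit the small but genuinely needed step of passing from closures in $T^*\F_\bla$ to closures in $\F_\bla$, which the paper's citation leaves implicit, at the cost of being longer and of re-proving a special case of a known combinatorial theorem. Both proofs are complete modulo classical facts; yours leans on Ehresmann's closure description where the paper leans on Bj\"orner--Brenti.
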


\begin{proof}
This is the so-called ``Tableau Criterion'' for the Bruhat (i.e. geometric) order, see e.g. \cite[Thm. 2.6.3]{BB}.
\end{proof}

In what follows we will denote both partial orderings by $\leq_\sigma$.

\begin{lem}
\label{lem incomp}
For $I,J\in\Il$, $I\ne J$, there exists $\si\in S_n$ such that $J\not\leq_\si I$.
\end{lem}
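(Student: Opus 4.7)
The plan is to exploit the combinatorial criterion for $\le_\si$ supplied by the preceding lemma, namely that $J\le_\si I$ means $b^k_i\le a^k_i$ for every $k$ and $i$, where $b^k_1<\dots<b^k_{\la^{(k)}}$ and $a^k_1<\dots<a^k_{\la^{(k)}}$ are the elements of $\si^{-1}(\bigcup_{\ell=1}^k J_\ell)$ and $\si^{-1}(\bigcup_{\ell=1}^k I_\ell)$, respectively. The strategy is therefore to produce a single index $k$ and a single permutation $\si$ for which the largest entry on the $J$-side strictly exceeds the largest entry on the $I$-side.

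First I would locate the first place where the partial unions disagree: since $I\ne J$, there exists a smallest $k\in\{1,\dots,N-1\}$ with $\bigcup_{\ell=1}^k I_\ell\ne\bigcup_{\ell=1}^k J_\ell$. Indeed, if these unions coincided for every $k<N$, taking successive differences would force $I_\ell=J_\ell$ for every $\ell<N$, and then $I_N=J_N$ as well, contradicting $I\ne J$. Both unions have cardinality $\la^{(k)}$, so the symmetric difference is nonempty on both sides; in particular there exists an element
\[
c\in\bigcup_{\ell=1}^k J_\ell\,\setminus\,\bigcup_{\ell=1}^k I_\ell.
\]

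Next I would choose any permutation $\si\in S_n$ with $\si(n)=c$. Then $n\in\si^{-1}(\bigcup_{\ell=1}^k J_\ell)$, forcing $b^k_{\la^{(k)}}=n$, while $c\notin\bigcup_{\ell=1}^k I_\ell$ gives $n\notin\si^{-1}(\bigcup_{\ell=1}^k I_\ell)$, so $a^k_{\la^{(k)}}\le n-1$. Thus $b^k_{\la^{(k)}}>a^k_{\la^{(k)}}$, which by the combinatorial criterion means $J\not\le_\si I$.

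There is essentially no substantive obstacle here: the entire content of the proof is the observation that an element of the symmetric difference can always be shuttled to the last position of $\si$, where it witnesses a violation of the Tableau Criterion in the top row. The only minor point to verify is the existence of the desired $c$, which is immediate from the equality of the cardinalities of the two partial unions.
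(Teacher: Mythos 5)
Your proof is correct. It differs from the paper's, which is shorter: the paper observes that the $S_n$-action on $\Il$ intertwines the orderings, so that $J\le_\si I$ is equivalent to $\si^{-1}(J)\le_{\on{id}}\si^{-1}(I)$, and then chooses $\si$ so that $\si^{-1}(I)$ is the unique $\le_{\on{id}}$-minimal element $(\{1,\dots,\la^{(1)}\},\{\la^{(1)}+1,\dots,\la^{(2)}\},\dots)$ of $\Il$; since nothing other than the minimum lies below the minimum, $J\not\le_\si I$ for every $J\ne I$. You instead construct an explicit witness to a failed Tableau inequality: you shuttle an element $c$ of $\bigl(\bigcup_{\ell\le k}J_\ell\bigr)\setminus\bigl(\bigcup_{\ell\le k}I_\ell\bigr)$ to position $n$, so that $b^k_{\la^{(k)}}=n>a^k_{\la^{(k)}}$. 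Both arguments are elementary and rest on the same combinatorial criterion, but they buy slightly different things: the paper's choice of $\si$ depends only on $I$ and kills \emph{all} $J\ne I$ simultaneously (with that $\si$ the point $x_I$ is $\le_\si$-minimal, so $\on{Slope}_{\si,I}=\on{Leaf}_{\si,I}$), whereas your $\si$ depends on the pair $(I,J)$ through the choice of $c$ --- which is all the lemma as stated requires. One small simplification to your writeup: you do not need the \emph{smallest} $k$ with differing partial unions; any such $k$ works, and your own argument (equal cardinalities plus inequality of the partial unions) already produces one.
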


\begin{proof}
The group $S_n$ has an obvious action on $\II_\lambda$ as well. Observe that $J\leq_\sigma I$ is equivalent to
$ \si^{-1}(J) \le_{\id} \si^{-1}(I)$.
Hence the requirement of the Lemma is achieved by choosing $\sigma$ such that $\sigma^{-1}(I)$ is the $\leq_{\on{id}}$-smallest
element of $\II_\lambda$, namely $(\{1,\ldots,\lambda^{(1)}\},\{\lambda^{(1)}+1,\ldots,\lambda^{(2)}\},\ldots)$.
\end{proof}

\medskip
For $\sigma\in S_n$, $I\in\Il$, we define $\on{Slope}_{\sigma,I}=\cup_{J\leq_\sigma I}\on{Leaf}_{\sigma,J}$.
The $\on{Slope}_{\sigma,I}$ is a closed subset of $T^*\F_\bla$ by \cite[Lemma 3.2.7]{MO}.

\subsection{Stable envelopes}
Given a closed $T$-invariant subset $Y\subset T^*\F_\bla$ and a class $E\in H^*_T(T^*\F_\bla)$,
we say that $E$ is supported in $Y$ if $E|_{T^*\F_\bla-Y} =0.$

For given $I$ and $\sigma$, we define the following classes in $H^*_T(pt)$
\bea
e^{hor}_{\sigma,I,+}=\prod_{a<b} \mathop{\prod_{\sigma(i)\in I_a}}_{\sigma(j)\in I_b} \prod_{i>j} (z_{\sigma(j)}-z_{\sigma(i)}), \qquad
e^{hor}_{\sigma,I,-}=\prod_{a<b} \mathop{\prod_{\sigma(i)\in I_a}}_{\sigma(j)\in I_b} \prod_{i<j} (z_{\sigma(j)}-z_{\sigma(i)}),
\eea
\bea
e^{ver}_{\sigma,I,+}=\prod_{a<b} \mathop{\prod_{\sigma(i)\in I_a}}_{\sigma(j)\in I_b} \prod_{i<j} (z_{\sigma(i)}-z_{\sigma(j)}-h), \ \
e^{ver}_{\sigma,I,-}=\prod_{a<b} \mathop{\prod_{\sigma(i)\in I_a}}_{\sigma(j)\in I_b} \prod_{i>j} (z_{\sigma(i)}-z_{\sigma(j)}-h).
\eea
These are the products of the positive (``$+$'') and negative (``$-$'') $T$-weights (w.r.t. to $\CC_{\sigma}$)
at $x_I$ in the tangent to $\F_\bla$ direction (``hor'') and fiber direction (``ver'').

Let $e_{\sigma,I,-}=e^{hor}_{\sigma,I,-} \cdot e^{ver}_{\sigma,I,-}$. Let $\on{sgn}_{\sigma,I}
=
(-1)^{\on{codim}(\Omega_{\sigma,I}\subset \F_\bla)}=
\deg(e^{hor}_{\sigma,I,-})$.

\begin{thm}
\label{thm:MO_stab}
For any $\sigma\in S_n$, there exists a unique map of $H^*_T(pt)$-modules
\bea
\on{Stab}_\sigma : H^*_T((\XX_n)^A) \to H^*_T(\XX_n)
\eea
such that for any $\bla$ with $|\bla|=n$ and any $I\in \Il$, the stable envelope $E_{\sigma,I}=\on{Stab}_\sigma(1_I)$ satisfies:
\begin{enumerate}
\item[(i)] $\on{supp}\,E_{\sigma,I}\subset \on{Slope}_{\sigma,I}$\,,

\item[(ii)] $E_{\sigma,I}|_{x_I} = \on{sgn}_{\sigma,I} \cdot e_{\sigma,I,-}$\,,

\item[(iii)] $\deg_\zz E_{\sigma,I}|_{x_J} < \dim \F_\bla = \sum_{1\leq i<j\leq N}\la_i\la_j$ for any $J\in\Il$ with $J<_\sigma I$.

\end{enumerate}

\end{thm}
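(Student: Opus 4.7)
The plan is to follow the standard Maulik-Okounkov strategy, separating uniqueness, which reduces to a divisibility-versus-$\zb$-degree argument via $A$-equivariant localization, from existence, which is an inductive construction over the partial order $\leq_\sigma$.

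For uniqueness, note first that $H^*_T((\XX_n)^A)$ is a free $H^*_T(pt)$-module with basis $\{1_I\}$, so $\on{Stab}_\sigma$ is determined by the classes $E_{\sigma, I} = \on{Stab}_\sigma(1_I)$ and it suffices to show each $E_{\sigma, I}$ is unique. Suppose $E$ and $E'$ both satisfy (i)--(iii), set $D = E - E'$, and use $A$-equivariant localization so that $D$ is captured by its restrictions $D|_{x_J}$ for $J\in\Il$. Condition (i) together with Lemma \ref{lem:leaf_conormal} forces $D|_{x_J} = 0$ for $J \not\leq_\sigma I$, and (ii) gives $D|_{x_I} = 0$. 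For $J <_\sigma I$, proceed by downward induction on $\leq_\sigma$: assuming $D|_{x_{J'}} = 0$ for every $J' >_\sigma J$ with $J'\leq_\sigma I$, the support condition localized at $x_J$ says that $D$ is supported in a closed set whose only branch through $x_J$ is $\overline{\on{Leaf}_{\sigma, J}}$, forcing $D|_{x_J}$ to be divisible in $H^*_T(pt) = \C[\zz, h]$ by the Euler class $e_J^\perp$ of the $\dim \Fla$ normal directions at $x_J$ (the repelling weights, transverse to the Lagrangian leaf). Each such weight is a linear form of $\zb$-degree $1$, so $\deg_\zb e_J^\perp = \dim \Fla$; condition (iii) bounds $\deg_\zb D|_{x_J} < \dim \Fla$; since $\C[\zz, h]$ is a polynomial domain this forces $D|_{x_J} = 0$, closing the induction.

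For existence, I would build $E_{\sigma,I}$ by downward induction on $\leq_\sigma$. As an initial candidate, take the equivariant fundamental class $[\overline{C\Omega_{\sigma,I}}] \in H^*_T(T^*\Fla)$; by Lemma \ref{lem:leaf_conormal} this has support in $\overline{\on{Leaf}_{\sigma,I}} \subset \on{Slope}_{\sigma,I}$ and its restriction at $x_I$ is the top Chern class of the normal bundle, proportional to $e_{\sigma,I,-}$, so after rescaling (i) and (ii) hold. Then process the points $x_J$ for $J <_\sigma I$ in decreasing $\leq_\sigma$-order. At each $J$, decompose the current restriction at $x_J$ into a part divisible by $e_J^\perp$ (the high-$\zb$-degree part) and a residual of $\zb$-degree $< \dim \Fla$. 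The high-degree part lifts to a class $c_J$ supported in $\on{Slope}_{\sigma,J}$; subtracting $c_J$ restores the degree bound at $x_J$ without disturbing previous steps, since $c_J$ vanishes at $x_{J'}$ for every $J' \not\leq_\sigma J$. Iterating produces $E_{\sigma,I}$ satisfying (i)--(iii).

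The main obstacle will be the construction of the correction classes $c_J$ with precisely the required support and fixed-point restrictions: one needs to lift a multiple of $e_J^\perp$ at $x_J$ to a global equivariant class on $T^*\Fla$ supported in $\on{Slope}_{\sigma,J}$ whose restrictions at other fixed points are controlled. This is the technical heart of the Maulik-Okounkov construction and relies essentially on the Lagrangian/symplectic structure of $T^*\Fla$, which makes the dimensional bookkeeping close. I would plan to invoke the general existence arguments of \cite{MO}, specialized to cotangent bundles of partial flag varieties, rather than re-derive them from scratch here.
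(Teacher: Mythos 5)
The paper offers no independent proof of this theorem: it is quoted as Theorem 3.3.4 of \cite{MO} specialized to $T^*\Fla$, so your proposal is in fact more detailed than the paper's own treatment. Your uniqueness argument is the standard Maulik--Okounkov one and is essentially correct: by Lemma \ref{lem:leaf_conormal} the leaf through $x_J$ is the conormal $C\Omega_{\sigma,J}$, the normal (repelling) directions at $x_J$ number $\dim\Fla$, each weight $z_a-z_b$ or $z_a-z_b-h$ has $\zb$-degree one, and since $\C[\zz,h]$ is a domain, divisibility of $D|_{x_J}$ by their product contradicts (iii) unless $D|_{x_J}=0$. The one point to phrase more carefully is your claim that ``the only branch through $x_J$ is $\overline{\on{Leaf}_{\sigma,J}}$'': the set $\on{Slope}_{\sigma,I}$ may contain closures of leaves of several $J'$ with $J<_\sigma J'\leq_\sigma I$ passing through $x_J$, so the divisibility lemma should be applied at a $\leq_\sigma$-maximal $J$ with $D|_{x_J}\neq0$, after restricting to the open complement of $\bigcup_{J''\,\not\geq_\sigma J}\on{Slope}_{\sigma,J''}$; your inductive hypothesis is what makes this legitimate, but the support bookkeeping is the substance of the step.

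In the existence sketch, however, one step would fail as literally stated: for a general $g\in\C[\zz,h]$ there is \emph{no} decomposition $g=f\cdot e_J^{\perp}+r$ with $\deg_\zb r<\dim\Fla$, because division with remainder does not control multivariate $\zb$-degree. For instance, with $e_J^{\perp}=(z_1-z_2)(z_1-z_3)$ and $g=z_2^{\,2}$, any $r=g-f\,e_J^{\perp}$ with $\deg_\zb r<2$ would force $z_2^{\,2}=f(z_1-z_2)(z_1-z_3)$ modulo $h$, which is impossible. The relevant structural fact, which your sketch does not use, is homogeneity: every restriction $E|_{x_J}$ is homogeneous of total $(\zz,h)$-degree exactly $\dim\Fla$, so condition (iii) is equivalent to $h\mid E|_{x_J}$; even so, not every residue class modulo $e_J^{\perp}$ admits an $h$-divisible representative, which is precisely why the existence argument in \cite{MO} is a subtler inductive/filtration construction rather than naive division. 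Since you explicitly defer this technical heart to \cite{MO} -- exactly as the paper does -- the proposal is acceptable overall, but you should not present the divisible-plus-low-degree decomposition as available for arbitrary restrictions; it is the conclusion of the hard construction, not an input to it.
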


This is Theorem 3.3.4 in \cite{MO} applied to $GL_n$ partial flag varieties. The choice of sign in (ii) is
called a polarization in \cite{MO}. We will fix the polarization $\on{sgn}_{\sigma,I}$ in the whole paper.

\subsection{Geometric $R$-matrices}

The maps $\on{Stab}_\sigma$ become isomorphisms after inverting the
elements $(e_{\sigma,I,-})_{I\in\Il}$.
For $\sigma',\sigma\in S_n$, we define the $R$-matrix
\bean
\label{R}
\phantom{aaaa}
R_{\si',\si} = \St_{\si'}^{-1}\circ \St_{\si} \in \End(H_T((\XX_n)^A))\otimes \C(\zz;h)=\End((\C^N)^{\otimes n})\otimes \C(\zz;h),
\eean
where $\C(\zz;h)$ is the algebra of rational functions in $\zz, h$.

\medskip
\noindent
{\bf Example}\ \cite[Example 4.1.2]{MO}.\enspace
Let $n=2$. The group $S_2$ consists of two elements:\ $\on{id}$ and the transposition $s$.
After the identification
$H_T^*((\XX_n)^A) = (\C^N)^{\otimes 2}\otimes \C[\zz;h]$, the $R$-matrix is given by
\bean
\label{RR}
R_{s,\on{id}} = R(z_1-z_2),
\eean
where we define
\bea
R(u) = \frac {u \on{Id} - h P}{u-h},
\eea
and $P\in\End(\C^N\otimes\C^N)$ is the permutation of tensor factors.

For the convenience of the reader we show the calculation leading to (\ref{RR}). The space $\XX_2$ is the union
of $N(N-1)/2$ copies of $T^*\PP^1$ and $N$ points.
The space $\XX_2^A$ thus has $N(N-1)$ points in the $N(N-1)/2$ copies of $T^*\PP^1$ together
with the $N$ isolated points of $\XX_2$. On $H_T^*$ of the isolated points
	of $\XX_2$ both sides of (\ref{RR}) act as identity.
Let $i<j$. Consider the $T^*\PP^1$ component corresponding to $\lambda=(0,\ldots,0,1_i,0,\ldots,0,1_j,0,\ldots,0)$.
Then
\bea
H^*_T(T^*\PP^1) = \C[\gamma_{i,1},\gamma_{j,1}]\otimes \C[z_1,z_2;h]/ \left< (u-\gamma_{i,1})(u-\gamma_{j,1})=(u-z_1)(u-z_2) \right>.
\eea
The two fixed points $x_I$ and $x_J$ in this $T^*\PP^1$ component are indexed by
$I=(I_1,\dots,I_N)$ such that $I_i=\{1\}$ and $I_j=\{2\}$ and $I_m=\varnothing$ for all other indices, and
$J=(I_1,\dots,I_N)$ such that $I_i=\{2\}$ and $I_j=\{1\}$ and $I_m=\varnothing$ for all other indices.
Let $F_I$ and $F_J$ denote the fibers over $x_I$ and $x_J$ in $T^*\PP^1$. We have
\bea
&
\St_{\on{id}} &\quad :\quad 1_{I}\mapsto -[F_I]= \gamma_{i,1} - z_2, \qquad\ \ \qquad 1_{J}\mapsto [\PP^1]+[F_I]= \gamma_{i,1}-z_1-h,
\\
&
\St_{s} &\quad :\quad 1_{I}\mapsto [\PP^1]+[F_J] = \gamma_{i,1} - z_2-h, \ \ 1_{J}\mapsto -[F_J]= \gamma_{i,1}-z_1.
\eea
Here the geometric statements (e.g. $\St_{\on{id}}(1_I)=-[F_I]$) can be checked by verifying the
conditions of Theorem \ref{thm:MO_stab},
and the calculation (e.g. $-[F_I]=\gamma_{i,1} - z_2$) can be verified by equivariant localization. Therefore we have
\begin{equation}
\begin{split}
\St_s^{-1}\circ \St_{\on{id}}\left( 1_{I} \right) & = \St_s^{-1}\left( \gamma_{i,1}-z_2\right) \\
& = \St_s^{-1}\left( \frac{ z_1 -z_2 }{z_1-z_2-h} (\gamma_{i,1}-z_2-h) + \frac{-h}{z_1-z_2-h}(\gamma_{i,1}-z_1) \right)\\
& = \frac{ z_1 -z_2 }{z_1-z_2-h} \cdot 1_{I} + \frac{-h}{z_1-z_2-h} \cdot 1_{J}.
\end{split}
\end{equation}
This, together with a similar calculation for $1_J$ proves the claim (\ref{RR}) for the fixed points in $T^*\PP^1$.

\medskip

It is enough to consider $R$-matrices corresponding to pairs of
chambers separated by a wall.
Such a pair has the following form. For $i=1,\dots,n-1$, let $s_i\in S_n$ be the transposition $(i,i+1)$.
Any chamber $\CC_\si = \{p\in \a_\R\ | z_{\si(1)}(p)>\dots > z_{\si(n)}(p)\}$
is separated by a wall from exactly $n-1$ chambers. They are
$\CC_{\si s_i} = \{p\in \a_\R\ |\ z_{\si(1)}(p)>\dots > z_{\si(i-1)}>z_{\si(i+1)}>z_{\si(i)}>z_{\si(i+2)}>\dots> z_{\si(n)}(p)\}$,
$i=1,\dots,n-1$.

\begin{thm} [Section 4.1.6 in \cite{MO}]
\label{sec 416}
\bean
R_{\si s_i,\si} = R^{(\si(i),\si(i+1))}(z_{\si(i)}-z_{\si(i+1)}) \in \End((\C^N)^{\otimes n})\otimes \C(\zz;h),
\eean
where the superscript means that the $R$-matrix of formula \Ref{RR} operates in the $\si(i)$-th and $\si(i+1)$-th tensor factors.

\end{thm}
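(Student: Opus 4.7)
The plan is to reduce the general-$n$ statement to the rank-one calculation already carried out in the preceding Example, by factoring each $\St_\si$ through an intermediate fixed-point locus associated with the wall separating $\CC_\si$ and $\CC_{\si s_i}$.

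First I would introduce the codimension-one subtorus $A' \subset A$ whose Lie algebra is exactly the wall between $\CC_\si$ and $\CC_{\si s_i}$, namely
\be
A' \,=\, \{\,\diag(a_1,\ldots,a_n) \in A \,\mid\, a_{\si(i)} = a_{\si(i+1)}\,\}.
\ee
Every other torus root hyperplane is transverse to this wall, so $\CC_\si$ and $\CC_{\si s_i}$ project to the same chamber of $\a_\R/\R\,\al_{\si(i),\si(i+1)}$, and only the single remaining $A/A'$-coordinate distinguishes them.

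Next I would analyze the $A'$-fixed locus $(T^*\Fla)^{A'}$. It decomposes as a disjoint union indexed by choices of how to distribute $\{1,\ldots,n\}\minus\{\si(i),\si(i+1)\}$ among the $N$ blocks (of sizes prescribed by $\bla$ up to the two remaining entries). Two cases arise: if $\si(i)$ and $\si(i+1)$ end up in the same block $I_a$, the component is a single $A$-fixed point; if they end up in distinct blocks $I_a,I_b$ with $a\ne b$, the component is a copy of $T^*\Pone$ containing exactly two $A$-fixed points $x_I$ and $x_{I'}$, where $I'$ is obtained from $I$ by swapping the assignments of $\si(i)$ and $\si(i+1)$.

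Then I would invoke the subtorus factorization property of stable envelopes from \cite[Section 3.5]{MO}: $\St_\si$ equals the composition of an $A'$-stable envelope into $T^*\Fla$ with an $A/A'$-stable envelope on each connected component of $(T^*\Fla)^{A'}$. Since $\CC_\si$ and $\CC_{\si s_i}$ give the same $A'$-chamber, the $A'$-pieces of $\St_\si$ and $\St_{\si s_i}$ agree, and they cancel in the product $R_{\si s_i,\si} = \St_{\si s_i}^{-1}\circ\St_\si$. Thus the problem reduces to comparing the two $A/A'$-stable envelopes on each component of $(T^*\Fla)^{A'}$. On a point component, the $A/A'$-envelope is the identity, so the R-matrix fixes the corresponding basis vectors $v_I$. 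On a $T^*\Pone$-component, the comparison is exactly the $n=2$ computation carried out in the Example, but with $z_1,z_2$ replaced by $z_{\si(i)},z_{\si(i+1)}$; it produces
\be
\frac{(z_{\si(i)}-z_{\si(i+1)})\,\id \,-\, h\,P}{z_{\si(i)}-z_{\si(i+1)}-h}
\ee
acting on the $\si(i)$-th and $\si(i+1)$-th tensor factors of $(\C^N)^{\ox n}$, while the remaining tensor factors (recording the fixed distribution of the other indices) are carried along unchanged. Summing the contributions of all components yields precisely $R^{(\si(i),\si(i+1))}(z_{\si(i)}-z_{\si(i+1)})$.

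The principal obstacle I expect is not the rank-one calculation itself, but the careful bookkeeping of the $A'$-fixed-point decomposition together with the factorization identity for stable envelopes: one must match the geometric splitting of $(T^*\Fla)^{A'}$ into $T^*\Pone$-type and point components with the algebraic tensor decomposition that isolates the positions $\si(i)$ and $\si(i+1)$ inside $(\C^N)^{\ox n}$, so that the Example transports verbatim. Once that identification is made, the slope and normalization axioms (ii) and (iii) of Theorem \ref{thm:MO_stab} pin down the $A/A'$-envelopes uniquely on each $T^*\Pone$-component, and the claimed factorized form of $R_{\si s_i,\si}$ follows.
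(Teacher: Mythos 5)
Your proposal is correct and is essentially the argument that the paper itself relies on: the paper gives no independent proof of this theorem, deferring to \cite[Section 4.1.6]{MO}, whose proof is precisely your reduction — factor $\on{Stab}_\si$ through the fixed locus of the wall subtorus $A'$ using the triangle (factorization) lemma for stable envelopes of \cite[Section 3.6]{MO}, cancel the common $A'$-envelope in $\St_{\si s_i}^{-1}\circ\St_\si$ since $\CC_\si$ and $\CC_{\si s_i}$ induce the same $A'$-chamber, and evaluate the remaining rank-one comparison on the point and $T^*\Pone$ components of $(T^*\Fla)^{A'}$ via the $n=2$ computation reproduced in the paper's Example. The only cosmetic slip is that the $A'$-chamber data live in the wall $\a_\R'=\al_{\si(i),\si(i+1)}^{\perp}$ itself rather than in the quotient of $\a_\R$ you wrote, but your use of the factorization is correct, including the observation that $R^{(\si(i),\si(i+1))}$ acts as the identity on basis vectors coming from the point components (where $P$ acts trivially).
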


\section{Weight functions}
\label{sec Weight functions}

\subsection{Weight functions $W_{I}$}
For $I\in \Il$, we define the weight functions $W_{I}(\TT;\zz;h)$, c.f. \cite{TV1, TV4}.

Recall \,$\bla=(\la_1\lc\la_N)$. Denote  \,$\la^{(i)}\>=\la_1\lsym+\la_i$ and
 \,$\la^{\{1\}}\<=\sum_{i=1}^{N-1}\la^{(i)}=$
 \linebreak $ \sum_{i=1}^{N-1}(N\<\<-i)\>\la_i$\>.
Recall $I=(I_1,\dots,I_N)$. Set
\;$\bigcup_{\>k=1}^{\,j}I_k=\>\{\>i^{(j)}_1\!\lsym<i^{(j)}_{\la^{(j)}}\}$\>. Consider the variables
\,$t^{(j)}_a$, \,$j=1\lc N$, \,$a=1\lc\la^{(j)}$,
where \,$t^{(N)}_a=z_a$, \,$a=1\lc n$\>. Denote $t^{(j)}=(t^{(j)}_k)_{k\leq\la^{(j)}}$ and \,$\TT=(t^{(1)}, \dots, t^{(N-1)})$.

\vsk.2>
The weight functions are
\vvn.4>
\beq
\label{hWI-}
W_I(\TT;\zb;h)\,=\,(-h)^{\>\la^{\{1\}}}\,
\Sym_{\>t^{(1)}_1\!\lc\,t^{(1)}_{\la^{(1)}}}\,\ldots\;
\Sym_{\>t^{(N-1)}_1\!\lc\,t^{(N-1)}_{\la^{(N-1)}}}U_I(\TT;\zb;h)\,,
\vv.3>
\eeq
\be
U_I(\TT;\zb;h)\,=\,\prod_{j=1}^{N-1}\,\prod_{a=1}^{\la^{(j)}}\,\biggl(
\prod_{\satop{c=1}{i^{(j+1)}_c\<<\>i^{(j)}_a}}^{\la^{(j+1)}}
\!\!(t^{(j)}_a\<\<-t^{(j+1)}_c-h)
\prod_{\satop{d=1}{i^{(j+1)}_d>\>i^{(j)}_a}}^{\la^{(j+1)}}
\!\!(t^{(j)}_a\<\<-t^{(j+1)}_d )\,\prod_{b=a+1}^{\la^{(j)}}
\frac{t^{(j)}_a\<\<-t^{(j)}_b\<\<-h}{t^{(j)}_a\<\<-t^{(j)}_b}\,\biggr)\,.
\ee
In these formulas for a function $f(t_1,\dots,t_k)$ of some variables we denote
\be
\Sym_{t_1,\dots,t_k}f(t_1,\dots,t_k) = \sum_{\sigma\in S_k}f(t_{\sigma_1},\dots,t_{\sigma_k}).
\ee

\begin{example}
Let $N=2$, $n=2$, $\bla=(1,1)$, $I=(\{1\},\{2\})$,
$J=(\{2\}, \{1\})$. Then
\bea
W_I(\TT;\zz;h)= -h\, (t^{(1)}_1\<\<-z_2),
\qquad
W_J(\TT;\zz;h)= -h\, (t^{(1)}_1\<\<-z_1-h).
\eea
\end{example}

\begin{example}
Let $N=2$, $n=3$, $\bla=(1,2)$, $I=(\{2\}, \{1,3\})$. Then
\bea
W_I(\TT;\zz;h)= -h\, (t^{(1)}_1\<\<-z_1-h)(t^{(1)}_1\<\<-z_3).
\eea
\end{example}

For a subset $A\subset\{1,\dots,n\}$, denote $\zz_A=(z_a)_{a\in A}$. For $I\in\Il$, denote
$\zz_I=(\zz_{I_1},\dots,\zz_{I_N})$. For
$f(t^{(1)},\dots,t^{(N)})\in\C[t^{(1)},\dots,t^{(N)}]^{S_{\la^{(1)}}\times\dots\times S_{\la^{(N)}}}$,
we define $f(\zz_I)$ by replacing $t^{(j)}$ with $\cup_{k=1}^j\zz_{I_k}$.
Denote
\beq
c_\bla(\zb_I)\,=\,
\prod_{a=1}^{N-1}\>\prod_{\ij\in \cup_{b=1}^a I_b\!}\>(z_i\<-z_j\<-h)\,.
\vv.1>
\eeq

\begin{lem}
\label{lem W_I}

${}$

\begin{enumerate}
\item[(i)]

For $I, J\in\Il$, the polynomial $W_I(\zz_J;\zz;h)$ is divisible by $c_\bla(\zz_J)$.

\item[(ii)]
For $I\in\Il$,
\bea
W_I(\zz_I;\zz;h) =
c_\bla(\zz_I) \ \prod_{a<b} \mathop{\prod_{i\in I_a}}_{j\in I_b}
\Big( \prod_{i<j} (z_i-z_j) \prod_{i>j} (z_i-z_j-h) \Big).
\eea
\item[(iii)]
For $I, J\in\Il$, $J<_{\on{id}}I$
\bea
\deg_\zz W_I(\zz_J;\zz;h)< \deg_\zz W_I(\zz_I;\zz;h;\zz_I).
\eea

\item[(iv)]

For $I, J\in\Il$, we have $W_I(\zz_J;\zz;h)=0$ unless $J\leq_{\on{id}}I$.

\end{enumerate}

\end{lem}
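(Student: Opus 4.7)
The plan is to evaluate $W_I(\zz_J;\zz;h)$ by expanding the symmetrization term by term and identifying which permutations survive. Writing $\{k^{(j)}_1<\dots<k^{(j)}_{\la^{(j)}}\}=J_1\cup\dots\cup J_j$, one has
\be
W_I(\zz_J;\zz;h)\,=\,(-h)^{\la^{\{1\}}}\!\!\sum_{\bs\si=(\si^{(1)}\lc\si^{(N-1)})}\!\!
U_I\big|_{t^{(j)}_a\>=\>z_{k^{(j)}_{\si^{(j)}(a)}}}.
\ee
The key structural observation is that, for each $j,a$, the element $k^{(j)}_{\si^{(j)}(a)}$ also lies in $J_1\cup\dots\cup J_{j+1}$ and hence equals $k^{(j+1)}_e$ for a unique $e$; after substitution, the factor $(t^{(j)}_a-t^{(j+1)}_d)$ in $U_I$ with $d=(\si^{(j+1)})^{-1}(e)$ specializes to $0$ precisely when $i^{(j+1)}_d>i^{(j)}_a$. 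Apparent poles from $(t^{(j)}_a-t^{(j)}_b)^{-1}$ do no harm since distinct slots carry distinct $z$'s. Reading this criterion at every level, a surviving $\bs\si$ is equivalent to a compatible system of bijections $L_j\colon J_1\cup\dots\cup J_j\to I_1\cup\dots\cup I_j$ (defined by $L_j(k^{(j)}_{\si^{(j)}(a)})=i^{(j)}_a$) with $L_j(k)\ge L_{j+1}(k)$ at every level and $L_N(k)=k$.

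For part~(ii), the diagonal case $J=I$ forces every such $L_j$ to be the identity by induction on the smallest unassigned element (using $L_j(k)\ge k$ and bijectivity), so $\bs\si=(\on{id},\dots,\on{id})$ is the unique contributing term. Evaluating $U_I|_{t^{(j)}_a=z_{i^{(j)}_a}}$ and regrouping, the $(z_{i^{(j)}_a}-z_{i^{(j+1)}_c}-h)$-factors with $i^{(j+1)}_c\in I_1\cup\dots\cup I_j$ combine with the diagonal rational factors $(z_{i^{(j)}_a}-z_{i^{(j)}_b}-h)/(z_{i^{(j)}_a}-z_{i^{(j)}_b})$ and telescope across $j=1,\dots,N-1$ into $c_\bla(\zz_I)$; the remaining $(z_{i^{(j)}_a}-z_{i^{(j+1)}_c}-h)$-factors with $i^{(j+1)}_c\notin I_1\cup\dots\cup I_j$ and $(z_{i^{(j)}_a}-z_{i^{(j+1)}_d})$-factors with $i^{(j+1)}_d>i^{(j)}_a$ assemble into the product over $a<b$, $i\in I_a$, $j\in I_b$ stated in~(ii).

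The other parts use the same machinery. For~(iv), if $J\not\le_{\on{id}}I$ pick a minimal $j$ and $a$ with $k^{(j)}_a>i^{(j)}_a$: any candidate $L_j$ must send the $\la^{(j)}-a+1$ elements $k^{(j)}_a\lc k^{(j)}_{\la^{(j)}}$ (all $>i^{(j)}_a$) to $I$-labels $\ge k^{(j)}_a>i^{(j)}_a$ in $I_1\cup\dots\cup I_j$, but only $\la^{(j)}-a$ such labels exist, violating Hall's condition; hence every term vanishes. For~(i), each surviving $\bs\si$-term for $J\le_{\on{id}}I$ contains $c_\bla(\zz_J)=\prod_{a=1}^{N-1}\prod_{i,j\in J_1\cup\dots\cup J_a}(z_i-z_j-h)$ as a divisor, produced by the same telescoping as in~(ii) applied to $J$-indices via the bijections $L_j$. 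For~(iii), each non-vanishing $\bs\si$-term has $\zz$-degree $D-\#\{(j,k):L_{j+1}(k)<L_j(k)\}$ with $D=\sum_j\la^{(j)}(\la^{(j+1)}-1)=\deg_\zz W_I(\zz_I)$ (a decrement arising whenever an (A)-factor collapses to $-h$); when $J<_{\on{id}}I$, every non-vanishing $\bs\si$ has at least one strict decrease (else $L_j=\on{id}$ at every level gives $J_1\cup\dots\cup J_j=I_1\cup\dots\cup I_j$ and $J=I$), so each surviving term has degree $\le D-1$ and $\deg_\zz W_I(\zz_J)<\deg_\zz W_I(\zz_I)$. The principal technical obstacle is the telescoping identification in~(ii): verifying that the rational factors together with the selected shifted-difference factors recombine, across all $N-1$ flag steps, into exactly $c_\bla(\zz_I)$ without residue.
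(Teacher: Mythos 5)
Your proposal is correct and is essentially the paper's own argument: the paper proves part (i) by exactly this term-by-term analysis of the symmetrization (its three cases for each factor $z_{j^{(N-1)}_c}-z_{j^{(N-1)}_d}-h$ are precisely your trichotomy on $L_{j+1}$ versus $L_j$, with the third case being your vanishing condition), and it disposes of parts (ii)--(iv) ``by inspection of the definition'', which your $L_j$-bijection, Hall-condition, and degree-count arguments simply carry out in detail. The telescoping you flag in (ii) does close, with one bookkeeping correction: at each level $j$ the factors $(z_x-z_y)$ with $x<y$ both in $I_1\cup\dots\cup I_j$ must be used to cancel the denominators of the rational factors (rather than being sent to the cross product), after which their numerators together with the factors $(z_x-z_y-h)$, $x>y$ both in $I_1\cup\dots\cup I_j$, produce $\prod_{x\ne y}(z_x-z_y-h)$ per level, and the prefactor $(-h)^{\la^{\{1\}}}$ accounts for the diagonal entries of \Ref{c bla}, so the product over $j=1,\dots,N-1$ is exactly $c_\bla(\zz_I)$ with no residue.
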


Lemma \ref{lem W_I} is proved in Section \ref{proofs}.

\subsection{Weight functions $W_{\si,I}$}

For $\si\in S_n$ and $I\in\Il$, we define
\bea
W_{\si,I}(\TT;\zz;h) = W_{\si^{-1}(I)}(\TT;z_{\si(1)},\dots,z_{\si(n)};h),
\eea
where $\si^{-1}(I)=(\si^{-1}(I_1),\dots,\si^{-1}(I_N))$.

\begin{example}
Let $N=2$, $n=2$, $\bla=(1,1)$, $I=(\{1\}, \{2\})$, $J=(\{2\},\{1\})$. Then
\bea
& W_{\on{id},I}(\TT;\zz;h)= -h\, (t^{(1)}_1\<\<-z_2),&
\qquad
W_{\on{id},J}(\TT;\zz;h)= -h\, (t^{(1)}_1\<\<-z_1-h),
\\
&W_{s,I}(\TT;\zz;h)= -h\, (t^{(1)}_1\<\<-z_2-h),&
\qquad
W_{s,J}(\TT;\zz;h)= -h\, (t^{(1)}_1\<\<-z_1).
\eea
\end{example}

\begin{lem}
\label{lem W I sigma}
For any $\si\in S_n$, we have the following statements:
\begin{enumerate}
\item[(i)]

For $I, J\in\Il$, the polynomial $W_{\si,I}(\zz_J;\zz;h)$ is divisible by $c_\bla(\zz_J)$.

\item[(ii)]
For $I\in\Il$,
\bea
W_{\si,I}(\zz_I;\zz;h) =
c_\bla(\zz_I) \ \prod_{a<b} \mathop{\prod_{\sigma(i)\in I_a}}_{\sigma(j)\in I_b}
\Big( \prod_{i<j} (z_{\sigma(i)}-z_{\sigma(j)}) \prod_{i>j} (z_{\sigma(i)}-z_{\sigma(j)}-h) \Big).
\eea
\item[(iii)]
For $I, J\in\Il$, $J<_{\si}I$,
\bea
\deg_\zz W_{\si,I}(\zz_J;\zz;h)< \deg_\zz W_{\si,I}(\zz_I;\zz;h).
\eea

\item[(iv)]

For $I, J\in\Il$, we have $W_{\si,I}(\zz_J;\zz;h)=0$ unless $J\leq_{\si}I$.

\end{enumerate}

\end{lem}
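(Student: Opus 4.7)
The plan is to reduce Lemma \ref{lem W I sigma} to Lemma \ref{lem W_I} by a change of variables induced by $\sigma$. Set $\tilde z_a = z_{\sigma(a)}$ for $a=1,\dots,n$, and let $\tilde I = \sigma^{-1}(I)$, $\tilde J = \sigma^{-1}(J)$. By definition,
\[
W_{\sigma,I}(\TT;\zz;h) \,=\, W_{\tilde I}(\TT;\tilde z;h).
\]
My first step is to check that evaluation at $\zz_J$ on the left matches evaluation at $\tilde z_{\tilde J}$ on the right. Indeed, since $\tilde J_k = \sigma^{-1}(J_k)$, the multiset $\cup_{k=1}^{j} \tilde z_{\tilde J_k}$ equals $\{z_{\sigma(a)} : a \in \sigma^{-1}(\cup_{k=1}^j J_k)\} = \cup_{k=1}^{j}\zz_{J_k}$, so the substitutions agree. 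The same multiset identity immediately gives $c_\bla(\zz_J) = c_\bla(\tilde z_{\tilde J})$.

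Next I would record the ordering compatibility, which is the observation already made in the proof of Lemma \ref{lem incomp}: $J \leq_\sigma I$ is equivalent to $\tilde J \leq_{\on{id}} \tilde I$. With these three bookkeeping facts in hand, each of the four items follows at once from the corresponding item of Lemma \ref{lem W_I} applied to $\tilde I, \tilde J$ in the variables $\tilde z$. Concretely: (i) is Lemma \ref{lem W_I}(i) after replacing $\zz \leadsto \tilde z$, $I \leadsto \tilde I$, $J \leadsto \tilde J$; (iii) follows because the total degree in $\tilde z$ equals the total degree in $\zz$ (a permutation of variables) and the ordering is preserved; and (iv) is immediate from Lemma \ref{lem W_I}(iv) together with the ordering equivalence.

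Item (ii) requires one extra rewriting: Lemma \ref{lem W_I}(ii) yields
\[
W_{\tilde I}(\tilde z_{\tilde I};\tilde z;h)
= c_\bla(\tilde z_{\tilde I})\,\prod_{a<b}\mathop{\prod_{i\in \tilde I_a}}_{j\in\tilde I_b}
\Bigl(\prod_{i<j}(\tilde z_i-\tilde z_j)\prod_{i>j}(\tilde z_i-\tilde z_j-h)\Bigr),
\]
and substituting $i' = \sigma(i)$, $j' = \sigma(j)$ (so that $i\in\tilde I_a \Leftrightarrow \sigma(i)\in I_a$ and $\tilde z_i = z_{\sigma(i)}$) transforms the right-hand side into the claimed product in the statement of (ii).

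The main obstacle here is merely keeping the permutation conventions straight between the variable relabeling $\tilde z_a = z_{\sigma(a)}$ and the index relabeling $\tilde I = \sigma^{-1}(I)$; there is no deep analytic or combinatorial content beyond what is already packaged in Lemma \ref{lem W_I} and in the $\sigma$-equivariance of the partial orderings $\leq_\sigma$.
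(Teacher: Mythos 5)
Your proof is correct and is exactly the argument the paper intends: the paper dispatches Lemma \ref{lem W I sigma} with the one line ``follows from Lemma \ref{lem W_I},'' and your reduction via $\tilde z_a = z_{\sigma(a)}$, $\tilde I=\sigma^{-1}(I)$, together with the identities $c_\bla(\zz_J)=c_\bla(\tilde z_{\tilde J})$ and the ordering equivalence $J\leq_\sigma I \Leftrightarrow \sigma^{-1}(J)\leq_{\on{id}}\sigma^{-1}(I)$ (already noted in the proof of Lemma \ref{lem incomp}), is precisely the bookkeeping that line suppresses. No gaps; your verification of the evaluation and of item (ii) under the substitution $i'=\sigma(i)$, $j'=\sigma(j)$ is accurate.
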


Lemma \ref{lem W I sigma} follows from Lemma \ref{lem W_I}.

\begin{lem}
\label{lem W si W}
For any $\si\in S_n$, $I\in \Il$, $i=1,\dots,n-1$, we have
\begin{equation}
W_{\sigma s_{i,i+1},I} =
\frac{z_{\sigma(i)}-z_{\sigma(i+1)}}{z_{\sigma(i)}-z_{\sigma(i+1)}+h}W_{\sigma,I} +
\frac h{z_{\sigma(i)}-z_{\sigma(i+1)}+h} W_{\sigma, s_{\sigma(i),\sigma(i+1)}(I)} ,
\end{equation}
where $s_{i,j}\in S_n$ is the transposition of $i$ and $j$.

\end{lem}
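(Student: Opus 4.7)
The strategy is to reduce to the case $\sigma = \on{id}$ and then verify the identity directly from the explicit formula \Ref{hWI-}. Using $W_{\sigma, I}(\TT;\zz;h) = W_{\sigma^{-1}(I)}(\TT; z_{\sigma(1)},\dots,z_{\sigma(n)};h)$ together with $(\sigma s_{i,i+1})^{-1}(I) = s_{i,i+1}(\sigma^{-1}(I))$ and $\sigma^{-1}(s_{\sigma(i),\sigma(i+1)}(I)) = s_{i,i+1}(\sigma^{-1}(I))$ (both consequences of $\sigma^{-1}s_{\sigma(i),\sigma(i+1)}\sigma = s_{i,i+1}$), and relabeling $\sigma^{-1}(I) \to I$, $z_{\sigma(k)} \to z_k$, the identity becomes the $\sigma = \on{id}$ statement. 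Clearing the denominator, this is equivalent to the polynomial identity
\begin{equation*}
(z_i - z_{i+1}+h)\,W_{s_{i,i+1}(I)}(\TT; s_{i,i+1}\zz;h) - h\,W_{s_{i,i+1}(I)}(\TT;\zz;h) = (z_i - z_{i+1})\,W_I(\TT;\zz;h),
\end{equation*}
where $s_{i,i+1}\zz$ denotes $\zz$ with $z_i$ and $z_{i+1}$ interchanged.

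The key observation, read off from \Ref{hWI-}, is that $z_i$ and $z_{i+1}$ enter $U_I$ only through the factors of the $j = N-1$ block, as $t^{(N)}_i$ and $t^{(N)}_{i+1}$. Hence the identity reduces to a statement about this block after symmetrization over $t^{(N-1)}$, all other blocks being independent of $z_i, z_{i+1}$. Let $a, b$ satisfy $i \in I_a$ and $i+1 \in I_b$. If $a = b$, then $s_{i,i+1}(I) = I$ and the identity reduces to the $z_i \leftrightarrow z_{i+1}$ symmetry of $W_I$: this is immediate for $a = b = N$, and for $a = b < N$ the unique asymmetric contribution $(t^{(N-1)}_c - z_{i+1})(t^{(N-1)}_{c+1} - z_i - h)$ (with $c, c+1$ the positions of $i, i+1$ in $I_1 \cup \dots \cup I_{N-1}$) is symmetrized in $z_i \leftrightarrow z_{i+1}$ by pairing with the $t^{(N-1)}_c \leftrightarrow t^{(N-1)}_{c+1}$ term of the symmetrization, aided by the factor $(t^{(N-1)}_c - t^{(N-1)}_{c+1} - h)/(t^{(N-1)}_c - t^{(N-1)}_{c+1})$ in \Ref{hWI-}. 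If $a \ne b$, the identity is $R$-matrix-like: after extracting the common symmetric pieces, it reduces to a rational identity in $z_i, z_{i+1}, t^{(N-1)}_\bullet, h$, which is checked by direct expansion.

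The main technical difficulty lies in the case $a \ne b$: the structures of $U_I$ and $U_{s_{i,i+1}(I)}$ can differ not only in the $(N-1)$-block (when exactly one of $i, i+1$ lies in $I_N$) but also in lower-level blocks (when both lie in $\bigcup_{k \le N-1} I_k$, since the finer partition of $\bigcup_{k \le j} I_k$ for $j<N-1$ need not agree), so one must carefully track how each symmetrization redistributes the factors. The algebraic content of the identity is the rational $\gl_2$ $R$-matrix relation $R(u) = (u\,\on{Id} - hP)/(u-h)$ of \Ref{RR}, realized at the top level of the weight function formula, parallel to computations in \cite{TV1}.
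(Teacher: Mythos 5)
Your proposal is correct and takes essentially the same route as the paper: the paper likewise first reduces to $\si=\on{id}$ via the definition of $W_{\si,I}$, then verifies the identity by direct computation in the two local cases $n=2$ with $I=(\{1,2\},\varnothing,\dots,\varnothing)$ and $I=(\{1\},\{2\},\varnothing,\dots,\varnothing)$ — precisely your cases $a=b$ and $a\ne b$ — and deduces all remaining cases ``by picking up a suitable subexpression and an appropriate change of notation,'' which is exactly your extraction of the asymmetric block factors with the symmetric pieces as spectators. Your deferred ``checked by direct expansion'' steps are at the same level of detail as the paper's ``straightforward verification.''
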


Lemma \ref{lem W si W} is proved in Section \ref{proofs}.

\smallskip

Let $\si_0\in S_n$ be the longest permutation, that is, $\si_0: i\mapsto n+1-i$, $i=1,\dots,n$.

\begin{lem}
\label{lem:orto}

For $J,K\in \Il$, we have
\bea
\sum_{I\in\Il} \frac{W_{\on{id},J}(\zz_I;\zz;h) W_{\si_0,K}(\zz_I;\zz;h)}
{R(\zb_I)\,Q(\zb_I)\>c_\bla(\zb_I)^2}\,=\,\dl_{J,K}\,,
\eea
where
\bea
\label{RQ}
R(\zb_I)\,=\!\prod_{1\le a<b\le N}\,\prod_{i\in I_a}\,\prod_{j\in I_b}\,
(z_i-z_j)\,,\qquad
Q(\zb_I)\,=\!\prod_{1\le a<b\le N}\,\prod_{i\in I_a}\,\prod_{j\in I_b}\,
(z_i-z_j-h)\,.
\vv.3>
\eea
\end{lem}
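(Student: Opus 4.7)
The proof naturally splits on the position of $J$ and $K$ in the partial order $\leq_{\on{id}}$. First I would identify which $I$'s contribute. By Lemma~\ref{lem W I sigma}(iv), $W_{\on{id},J}(\zz_I;\zz;h)=0$ unless $I\leq_{\on{id}}J$, and $W_{\si_0,K}(\zz_I;\zz;h)=0$ unless $I\leq_{\si_0}K$. A direct combinatorial check, using that the longest permutation $\si_0$ reverses the ordering on $\Il$ (if $\cup_\ell I_\ell=\{i_1<\cdots<i_m\}$ then $\si_0(\cup_\ell I_\ell)=\{n+1-i_m<\cdots<n+1-i_1\}$), gives $I\leq_{\si_0}K\iff K\leq_{\on{id}}I$. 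Thus only $I$ with $K\leq_{\on{id}}I\leq_{\on{id}}J$ contribute, and the sum automatically vanishes whenever $K\not\leq_{\on{id}}J$; this already disposes of all pairs with $J\neq K$ in which $J$ and $K$ are incomparable or $K>_{\on{id}}J$.

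For the diagonal case $J=K$, only $I=J$ survives. Using Lemma~\ref{lem W I sigma}(ii) once with $\si=\on{id}$ and once with $\si=\si_0$, the two evaluations $W_{\on{id},J}(\zz_J;\zz;h)$ and $W_{\si_0,J}(\zz_J;\zz;h)$ carry complementary products over $i<j$ versus $i>j$ for pairs $i\in J_a,\,j\in J_b$, $a<b$ (the complementarity being exactly the effect of $\si_0$ reversing the order between $i$ and $j$). Multiplying them assembles into $c_\bla(\zz_J)^2\cdot R(\zz_J)\cdot Q(\zz_J)$, which exactly cancels the denominator and yields~$1$.

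The main difficulty is the off-diagonal case $K<_{\on{id}}J$, in which several $I$'s strictly between $K$ and $J$ contribute and must cancel. My plan is to realize the sum as an iterated residue in the variables $\TT=(t^{(1)},\dots,t^{(N-1)})$ of an appropriately chosen meromorphic function, built from $W_{\on{id},J}(\TT;\zz;h)\,W_{\si_0,K}(\TT;\zz;h)$ together with an auxiliary denominator whose iterated residues at $\TT=\zz_I$ reproduce exactly the summands of the lemma, with the normalization $R(\zz_I)Q(\zz_I)c_\bla(\zz_I)^2$ arising as the Jacobian of the substitution. Applying the residue theorem level by level from $t^{(N-1)}$ down to $t^{(1)}$ reduces the total sum to a single residue at infinity, which vanishes by a degree count based on the polynomial-degree bounds in Lemma~\ref{lem W I sigma}(iii). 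The technical heart is choosing the auxiliary denominator so that (a)~its pole structure produces the desired summands and (b)~its degree at infinity is high enough to kill the boundary residue. A backup route would be an induction on the length of the interval $[K,J]$ in $\leq_{\on{id}}$, using the exchange relation of Lemma~\ref{lem W si W} to expand $W_{\si_0,K}$ in the basis $\{W_{\on{id},K'}\}$ with R-matrix coefficients, thereby reducing the identity to a Yang-Baxter-type compatibility for products of R-matrices.
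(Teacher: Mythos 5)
Your preliminary reductions are correct and worth keeping: the support argument via part (iv) of Lemma \ref{lem W I sigma} together with the order-reversal $I\leq_{\si_0}K\iff K\leq_{\on{id}}I$ does show that only $K\leq_{\on{id}}I\leq_{\on{id}}J$ contribute, and your diagonal computation correctly assembles $W_{\on{id},J}(\zz_J;\zz;h)\,W_{\si_0,J}(\zz_J;\zz;h)=R(\zz_J)\,Q(\zz_J)\,c_\bla(\zz_J)^2$. But this only establishes that the matrix $M_{J,K}$ of the left-hand sides is triangular with unit diagonal; the entire content of the lemma is the vanishing of the strictly off-diagonal entries $K<_{\on{id}}J$, and there you have a plan, not a proof. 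Concretely, two things are missing from the residue route. First, the auxiliary kernel is never produced, so neither requirement (a) nor (b) is verified; in particular you never check that its residues at $\TT=\zz_I$ carry exactly the normalization $R(\zz_I)Q(\zz_I)c_\bla(\zz_I)^2$. Second, and more seriously, the proposed degree count is off-target: in an iterated residue over the $\TT$-variables, the residue at infinity is controlled by the $t$-degrees of the integrand, whereas part (iii) of Lemma \ref{lem W I sigma} bounds the $\zz$-degrees of the \emph{evaluated} functions $W_{\si,I}(\zz_J;\zz;h)$. Moreover, every summand of the lemma is homogeneous of total degree zero in $(\zz,h)$ (both weight-function evaluations have the same total degree, and their diagonal product equals the denominator), so any naive ``degree at infinity'' count is vacuous; a self-contained argument must combine the strict $\deg_\zz$ inequalities of part (iii) with a proof that the apparent poles of the sum along $z_i=z_j$, $z_i=z_j+h$ (coming from $R$, $Q$ and $c_\bla$) cancel between summands. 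That pole cancellation is the actual technical heart, and it is nowhere addressed; the backup induction via Lemma \ref{lem W si W} and Yang--Baxter compatibility is likewise only gestured at.

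For comparison, the paper does none of this work directly: it introduces, alongside the vectors $\xi_I$ of \Ref{xi}, the companion vectors $\xi_{\si_0,I}$ with the roles of $\on{id}$ and $\si_0$ exchanged, observes that the claimed identity is equivalent to the orthogonality $\mc S(\xi_I,\xi_{\si_0,J})=\delta_{I,J}\,R(\zz_I)/Q(\zz_I)$ for the bilinear form in which $(v_J)$ is orthonormal, and then cites \cite[Theorem 2.18]{GRTV}. In other words, the paper outsources precisely the off-diagonal cancellation you left open. If you want a proof at the level of this paper, the two-line reduction plus citation suffices; if you insist on a self-contained argument, you must close the pole-cancellation gap described above.
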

Lemma \ref{lem:orto} is proved in Section \ref{proofs}.

\subsection{$R$-matrices}

Consider $\C[\TT;\zz;h]\otimes_{\C[\zz;h]}\C(\zz;h)$ as a $\C(\zz;h)$-module.
Denote by $M_\bla$ the $\C(\zz;h)$-submodule generated by
the polynomials $(W_{I}(\zz;h;\TT))_{I\in\Il}$. Denote $M_n=\oplus_{|\bla|=n}M_\bla$.

\begin{lem}

The module $M_\bla$ is free of rank $|\Il|$ with the basis $(W_I)_{I\in\Il}$.
\end{lem}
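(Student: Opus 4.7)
Since $M_\bla$ is defined as the $\C(\zz;h)$-submodule generated by $(W_I)_{I\in\Il}$, it suffices to prove that these generators are linearly independent over $\C(\zz;h)$; freeness and the rank statement then follow automatically.

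The plan is to use the evaluation homomorphism. For each $J\in\Il$ one has a $\C(\zz;h)$-linear map $\mathrm{ev}_J : M_\bla \to \C(\zz;h)$ sending $f(\TT;\zz;h)\mapsto f(\zz_J;\zz;h)$, and combining them yields a map
\[
\Phi : M_\bla \longrightarrow \bigoplus_{J\in\Il} \C(\zz;h),\qquad f\longmapsto \bigl(f(\zz_J;\zz;h)\bigr)_{J\in\Il}.
\]
I will show that the matrix $M_{I,J} := W_I(\zz_J;\zz;h)$ has nonzero determinant, which forces the $W_I$ to be linearly independent.

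Refine the partial order $\leq_{\on{id}}$ on $\Il$ to a total order $\preceq$. By Lemma \ref{lem W_I}(iv), $W_I(\zz_J;\zz;h)=0$ whenever $J\not\leq_{\on{id}} I$; in particular $M_{I,J}=0$ whenever $J\succ I$, so the matrix $(M_{I,J})$ is triangular with respect to $\preceq$. The diagonal entries are given by Lemma \ref{lem W_I}(ii):
\[
W_I(\zz_I;\zz;h)\,=\,c_\bla(\zz_I)\,\prod_{a<b}\;\prod_{\substack{i\in I_a\\ j\in I_b}}\Bigl(\prod_{i<j}(z_i-z_j)\prod_{i>j}(z_i-z_j-h)\Bigr),
\]
which is a nonzero element of $\C(\zz;h)$ for every $I\in\Il$, since $c_\bla(\zz_I)$ is a product of factors of the form $z_i-z_j-h$ with $i\neq j$ and the remaining factors are products of $z_i-z_j$ and $z_i-z_j-h$ with distinct indices. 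Hence $\det(M_{I,J})\neq 0$ in $\C(\zz;h)$.

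Consequently, if $\sum_{I\in\Il} f_I W_I = 0$ in $M_\bla$ with $f_I\in\C(\zz;h)$, then evaluating at each $\zz_J$ yields the linear system $\sum_I f_I M_{I,J}=0$ with invertible coefficient matrix, forcing all $f_I=0$. Therefore $(W_I)_{I\in\Il}$ is a $\C(\zz;h)$-basis of $M_\bla$, which is thus free of rank $|\Il|$. The only nontrivial input is Lemma \ref{lem W_I}, already established; there is no real obstacle here beyond confirming that the triangularity and nonvanishing of the diagonal suffice, which is standard.
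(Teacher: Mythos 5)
Your proof is correct and is precisely the argument the paper has in mind: its one-line proof cites parts (ii) and (iv) of Lemma \ref{lem W_I}, which are exactly the nonvanishing-diagonal and triangularity inputs you use via the evaluation maps $f\mapsto f(\zz_J;\zz;h)$. You have simply written out the standard triangular-determinant details that the paper leaves implicit.
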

\begin{proof}
The lemma follows from parts (ii) and (iv) of Lemma \ref {lem W_I}.
\end{proof}

\begin{lem}
For any $\si\in S_n$ the polynomials $(W_{\si,I})_{I\in\Il}$ form a basis of
$M_\bla$.

\end{lem}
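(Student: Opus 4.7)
The plan is to verify two things: that each $W_{\sigma, I}$ belongs to $M_\bla$, and that the family $(W_{\sigma, I})_{I \in \Il}$ is $\C(\zz; h)$-linearly independent. Since the preceding lemma shows $M_\bla$ is free of rank $|\Il|$, these two facts together force $(W_{\sigma, I})_{I \in \Il}$ to be a basis of $M_\bla$.

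For the membership claim I would proceed by induction on the length $\ell(\sigma)$ of $\sigma$ in $S_n$. The base case $\sigma = \id$ is immediate. For the inductive step I would write $\sigma' = \sigma s_i$ and appeal to Lemma~\ref{lem W si W}, which expresses each $W_{\sigma', I}$ as an explicit $\C(\zz; h)$-linear combination of $W_{\sigma, I}$ and $W_{\sigma, s_{\sigma(i), \sigma(i+1)}(I)}$, both of which lie in $M_\bla$ by the inductive hypothesis. Since the adjacent transpositions generate $S_n$, this establishes membership for all $\sigma$.

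For linear independence I would study the evaluation matrix $M = \bigl(W_{\sigma, I}(\zz_J; \zz; h)\bigr)_{I, J \in \Il}$ with rows and columns indexed by a linear extension of the partial order $\leq_\sigma$ on $\Il$. By part (iv) of Lemma~\ref{lem W I sigma}, the $(I, J)$-entry vanishes unless $J \leq_\sigma I$, so $M$ is triangular in this ordering. By part (ii) of Lemma~\ref{lem W I sigma}, the diagonal entries $W_{\sigma, I}(\zz_I; \zz; h)$ are explicit nonzero elements of $\C[\zz; h]$, hence nonzero in $\C(\zz; h)$. Therefore $\det M \neq 0$, and since the $\C(\zz; h)$-linear map sending $f \mapsto \bigl(f(\zz_J; \zz; h)\bigr)_{J \in \Il}$ carries any nontrivial $\C(\zz; h)$-linear dependence among the $W_{\sigma, I}$ to a dependence among the columns of $M$, this forces the family to be linearly independent.

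I do not anticipate a serious obstacle: both ingredients are packaged in the preceding lemmas, and the assembly is straightforward. The only point of care is orienting the triangularity correctly in the independence argument, so that the vanishing condition of Lemma~\ref{lem W I sigma}(iv) matches the chosen linear extension of $\leq_\sigma$; if one prefers to avoid this setup bookkeeping, an alternative is to extend the inductive scheme used for membership and directly compute the $2\times 2$ blocks of the transition matrix from $(W_{\sigma, I})$ to $(W_{\sigma s_i, I})$ produced by Lemma~\ref{lem W si W}, whose determinants are readily seen to be nonzero rational functions in $\zz$ and $h$.
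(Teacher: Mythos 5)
Your proposal is correct and follows essentially the same route as the paper: membership in $M_\bla$ by induction on the length of $\si$ via Lemma \ref{lem W si W}, and linear independence from the triangularity and nonvanishing diagonal supplied by parts (ii) and (iv) of Lemma \ref{lem W I sigma} (which the paper invokes in exactly this way, leaving the evaluation-matrix argument you spell out implicit). Even your closing alternative---inverting the $2\times2$ transition blocks from Lemma \ref{lem W si W}---is the same alternative the paper's proof mentions.
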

\begin{proof}
The fact that $W_{\si,I}$ belongs to $M_\la$ follows from Lemma \ref{lem W si W} by
induction on the length of $\si$. The independence of $(W_{\si,I})_{I\in\Il}$
follows from parts (ii) and (iv) of Lemma \ref{lem W I sigma}. Alternatively, the independence of
$(W_{\si,I})_{I\in\Il}$ follows by induction from Lemma \ref{lem W si W} with $\si$ replaced
by $\si s_i$ (this is equivalent to inverting the formula in Lemma \ref{lem W si W}).

\end{proof}

For $\si\in S_n$, we define the {\it algebraic weight function map}
\bea
W_\si : (\C^N)^{\otimes n}\otimes \C(\zz;h) \to M_n, \quad
v_I \mapsto W_{\si,I}(\TT;\zz;h).
\eea
For $\si',\si\in S_n$, we define the $R$-matrix
\bea
\tilde R_{\si',\si} = W_{\si'}^{-1} \circ W_\si \in \End((\C^N)^{\otimes n})\otimes \C(\zz;h).
\eea

\begin{thm} [\cite{TV4}]
\label{thm tv3}

For any $\si\in S_n$ and $i=1,\dots,n-1$, we have
\bean
\tilde R_{\si s_i,\si} = R^{(\si(i),\si(i+1))}(z_{\si(i)}-z_{\si(i+1)}) \in \End((\C^N)^{\otimes n})\otimes \C(\zz;h),
\eean
where the superscript means that the $R$-matrix of formula
\Ref{RR} operates in the $\si(i)$-th and $\si(i+1)$-th tensor factors.

\end{thm}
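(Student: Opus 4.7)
\smallskip

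The plan is to read Lemma \ref{lem W si W} as an identity between the maps $W_\sigma$ and $W_{\sigma s_i}$ pulled back through an explicit operator on $(\C^N)^{\otimes n}$, and then recognize the inverse of that operator as the $R$-matrix.

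First I would introduce $u = z_{\sigma(i)} - z_{\sigma(i+1)}$ and define an operator $T \in \End((\C^N)^{\otimes n}) \otimes \C(\zz;h)$ by
\begin{equation*}
T(v_I) \,=\, \frac{u}{u+h}\,v_I \,+\, \frac{h}{u+h}\,v_{s_{\sigma(i),\sigma(i+1)}(I)}\,.
\end{equation*}
By Lemma \ref{lem W si W}, applying $W_\sigma$ to $T(v_I)$ produces exactly $W_{\sigma s_i,I} = W_{\sigma s_i}(v_I)$, so $W_{\sigma s_i} = W_\sigma \circ T$ as maps $(\C^N)^{\otimes n} \otimes \C(\zz;h) \to M_n$. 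Consequently
\begin{equation*}
\tilde R_{\sigma s_i,\sigma} \,=\, W_{\sigma s_i}^{-1} \circ W_\sigma \,=\, T^{-1}\,,
\end{equation*}
and it remains to invert $T$ and check that $T^{-1}$ is the claimed $R$-matrix acting in the $\sigma(i)$-th and $\sigma(i+1)$-th tensor factors.

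The operator $T$ plainly acts as the identity on every tensor factor other than positions $\sigma(i)$ and $\sigma(i+1)$: on $v_I = v_{i_1} \ox \cdots \ox v_{i_n}$ the only coordinates affected by $s_{\sigma(i),\sigma(i+1)}$ are the $\sigma(i)$-th and $\sigma(i+1)$-th. On those two factors, $T$ fixes $v_a \ox v_a$ (since in that case $s_{\sigma(i),\sigma(i+1)}(I) = I$ and the two coefficients sum to $1$) and sends $v_a \ox v_b$ with $a \ne b$ to $\tfrac{u}{u+h} v_a \ox v_b + \tfrac{h}{u+h} v_b \ox v_a$. Using the permutation $P$ of the two factors, this is uniformly
\begin{equation*}
T^{(\sigma(i),\sigma(i+1))} \,=\, \frac{u\,\on{Id} + h\,P}{u+h}\,.
\end{equation*}

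The inversion is then immediate from $P^2 = \on{Id}$: one computes $(u\,\on{Id} + h P)(u\,\on{Id} - h P) = (u^2 - h^2)\,\on{Id}$, hence
\begin{equation*}
T^{-1} \,=\, \frac{(u+h)(u\,\on{Id} - h P)}{u^2 - h^2} \,=\, \frac{u\,\on{Id} - h P}{u - h} \,=\, R(u)
\end{equation*}
on the $\sigma(i)$-th and $\sigma(i+1)$-th factors, and identity elsewhere. This gives $\tilde R_{\sigma s_i,\sigma} = R^{(\sigma(i),\sigma(i+1))}(z_{\sigma(i)} - z_{\sigma(i+1)})$, as stated. There is no real obstacle here beyond bookkeeping: once Lemma \ref{lem W si W} is recast as an intertwining identity $W_{\sigma s_i} = W_\sigma \circ T$, the content of the theorem is the purely algebraic fact that the two-site operator $\tfrac{u\,\on{Id} + h P}{u+h}$ is the inverse of $R(u)$. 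The one point to watch is making sure the permutation $s_{\sigma(i),\sigma(i+1)} \in S_n$ that appears in Lemma \ref{lem W si W} translates, under $v_I \mapsto v_{i_1} \ox \cdots \ox v_{i_n}$, precisely into transposition of the $\sigma(i)$-th and $\sigma(i+1)$-th tensor factors — which is the case because $s_{a,b}(I)$ only swaps the memberships of the labels $a$ and $b$ among the blocks $I_1,\dots,I_N$.
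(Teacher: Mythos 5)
Your proof is correct and takes essentially the same route as the paper, whose entire proof is the single line ``The theorem follows from Lemma \ref{lem W si W}'': you have merely made explicit the intertwining identity $W_{\sigma s_i}=W_\sigma\circ T$ with $T=\bigl(u\,\on{Id}+h\,P^{(\sigma(i),\sigma(i+1))}\bigr)/(u+h)$, the translation of $s_{\sigma(i),\sigma(i+1)}(I)$ into the factor permutation $P$, and the elementary inversion $T^{-1}=R(u)$ that the authors leave to the reader. Nothing further is needed.
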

\begin{proof}
The theorem follows from Lemma \ref{lem W si W}.
\end{proof}

For $\si\in S_n$, we define the {\it cohomological weight function map}
\bea
[W_\si] : H_T^*((\XX_n)^A) = (\C^N)^{\otimes n}\otimes \C[\zz;h] \to H_T^*(\XX_n) , \quad
v_I \mapsto [W_{\si,I}(\bs\Theta;\zz;h)],
\eea
where $W_{\si,I}(\bs\Theta;\zz;h)$ is the polynomial $W_{\si,I}(\TT;\zz;h)$ in which variables
$t^{(j)}_i$ are replaced with $\theta_{j,i}$ and $[W_{\si,I}(\bs\Theta;\zz;h)]$ is the cohomology class represented by
$W_{\si,I}(\bs\Theta;\zz;h)$.

Let $\on{Loc} : H^*_T(\XX_n) \to H^*_T((\XX_n)^A)$ be the localization map
$w \mapsto (w|_{x_I})_{x_I\in (\XX_n)^A}$.
According to Lemma \ref{lem W I sigma}, the composition $\on{Loc} \circ [W_\si]$ is upper triangular with respect
to the order $\leq_\si$. Therefore -- after tensoring with $\C(\zz;h)$ -- the map $\on{Loc} \circ [W_\si]$ is invertible.
We obtain that for $\si',\si\in S_n$, we can define the $R$-matrix
\bea
\bar R_{\si',\si} = [W_{\si'}]^{-1}\circ [W_\si] \in \End((\C^N)^{\otimes n})\otimes \C(\zz;h),
\eea
and that
$$
\bar R_{\si s_i,\si} = \tilde R_{\si s_i,\si}.
$$

\begin{cor}
\label{thm coh weight}
For any $\si\in S_n$ and $i=1,\dots,n-1$, we have
\bean
\bar R_{\si s_i,\si} = R^{(\si(i),\si(i+1))}(z_{\si(i)}-z_{\si(i+1)}) \in \End((\C^N)^{\otimes n})\otimes \C(\zz;h),
\eean
where the superscript means that the $R$-matrix of formula \Ref{RR} operates in the $\si(i)$-th and $\si(i+1)$-th tensor factors.
\end{cor}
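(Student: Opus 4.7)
The plan is to reduce the corollary to Theorem \ref{thm tv3} by showing that the $R$-matrix for the cohomological weight function maps agrees with the $R$-matrix for the algebraic weight function maps. Since the latter is already known to equal $R^{(\si(i),\si(i+1))}(z_{\si(i)}-z_{\si(i+1)})$, once the equality $\bar R_{\si s_i,\si}=\tilde R_{\si s_i,\si}$ is established, the corollary follows immediately. Indeed, the text just before the statement already claims this equality, and the job is to justify it.

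The mechanism is that Lemma \ref{lem W si W} is an identity in the polynomial ring $\C[\TT;\zz;h]$ (or after localization, in the $\C(\zz;h)$-module $M_\bla$) whose coefficients
\[
\frac{z_{\sigma(i)}-z_{\sigma(i+1)}}{z_{\sigma(i)}-z_{\sigma(i+1)}+h},\qquad
\frac{h}{z_{\sigma(i)}-z_{\sigma(i+1)}+h}
\]
depend only on $\zb$ and $h$ and not on $\TT$. Thus the first step is to observe that the substitution $t^{(j)}_k\mapsto\theta_{j,k}$ descends to a well-defined $\C[\zb;h]$-algebra homomorphism from $\C[\TT]^{S_{\la^{(1)}}\!\times\dots\times S_{\la^{(N)}}}\!\otimes\C[\zb;h]$ onto $H^*_T(T^*\F_\bla)$, because the relations in $H^*_T(T^*\F_\bla)$ only affect the $\bs\Theta$ and $\zb$ generators.

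Second, applying this substitution to Lemma \ref{lem W si W} yields the exact same linear relation
\[
[W_{\sigma s_i,I}]\,=\,\frac{z_{\sigma(i)}-z_{\sigma(i+1)}}{z_{\sigma(i)}-z_{\sigma(i+1)}+h}[W_{\sigma,I}]\,+\,\frac{h}{z_{\sigma(i)}-z_{\sigma(i+1)}+h}[W_{\sigma,s_{\sigma(i),\sigma(i+1)}(I)}]
\]
among the cohomology classes. This identifies the change-of-basis matrix between $\{[W_{\sigma,I}]\}_{I\in\Il}$ and $\{[W_{\sigma s_i,I}]\}_{I\in\Il}$ with the corresponding change-of-basis matrix between $\{W_{\sigma,I}\}$ and $\{W_{\sigma s_i,I\,}\}$ in $M_\bla$, since the coefficients are the same rational functions of $\zb,h$. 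Here one uses that $[W_\si]$ is invertible after tensoring with $\C(\zb;h)$, which was observed in the paragraph preceding the statement via the upper triangularity of $\on{Loc}\circ[W_\si]$ and Lemma \ref{lem W I sigma}(ii),(iv).

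The conclusion is then $\bar R_{\si s_i,\si}=\tilde R_{\si s_i,\si}$ in $\End((\C^N)^{\otimes n})\otimes\C(\zb;h)$, and invoking Theorem \ref{thm tv3} finishes the proof. The main (and only) obstacle is the first step: one must verify that the substitution $\TT\mapsto\bs\Theta$ is a well-defined $\C[\zb;h]$-module map that preserves the $S_{\la^{(j)}}$-invariant algebra structure, so that an algebraic identity among the $W_{\si,I}(\TT;\zb;h)$ really does imply the corresponding identity among the cohomology classes $[W_{\si,I}(\bs\Theta;\zb;h)]$. Once this is in place, the rest of the argument is formal.
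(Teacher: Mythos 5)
Your proposal is correct and takes essentially the same route as the paper: there the corollary is precisely the combination of the equality $\bar R_{\si s_i,\si}=\tilde R_{\si s_i,\si}$ (asserted in the paragraph preceding the statement and justified, as you do, by pushing Lemma \ref{lem W si W} through the substitution $\TT\mapsto\bs\Theta$ and using invertibility of $[W_\si]$ after tensoring with $\C(\zz;h)$, via the triangularity of $\on{Loc}\circ[W_\si]$ from Lemma \ref{lem W I sigma}) with Theorem \ref{thm tv3}. Your only addition is to make explicit that the substitution $t^{(j)}_k\mapsto\theta_{j,k}$ is a well-defined $\C[\zz;h]$-algebra homomorphism on the $S_{\la^{(j)}}$-invariants, a point the paper leaves implicit.
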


\begin{rem}
The maps $ W_\si : (\C^N)^{\otimes n}\otimes \C(\zz;h) \to M_n,$\ $\si\in S_n$, form what is called in \cite{V}
local tensor coordinates on $M_n$. The stable envelope maps as well as cohomological weight function maps are basically other examples of that
 structure.
\end{rem}

\section{Stable envelope maps and weight function maps}
\label{sec Stable envelope maps and weight function maps}

\subsection{Main theorem}
Denote
\bean
\label{c bla}
c_\bla(\bs \Theta) = \prod_{a=1}^{N-1}\prod_{i=1}^{\la^{(a)}}\prod_{j=1}^{\la^{(a)}}(\theta_{a,i}-\theta_{a,j} - h) \in H^*_T(T^*\F_\bla).
\eean
Observe that $c_\bla(\bs \Theta)$ is the equivariant Euler class of the bundle $\oplus_{a=1}^{N-1} \Hom(F_a,F_a)$ if we make $\C^\times$ act on it with weight $-h$.
Here,  by a slight abuse of notation, we wrote $F_a$ for the bundle over $\F_\bla$ with fiber $F_a$.

Let $c_n(\bs \Theta) = (c_\bla(\bs \Theta))_{|\bla|=n} \in H^*_T(\XX_n)$. Note that $c_n(\bs \Theta)$ is not a zero-divisor in $H^*_T(\XX_n)$,
because none of its fixed point restrictions is zero.

\begin{thm}
\label{main thm}
For any $\si\in S_n$, the maps
$\on{Stab}_{\si} : H^*_T((\XX_n)^A) \to H^*_T(\XX_n)$ and
$[W_\si] : H_T^*((\XX_n)^A) \to H_T^*(\XX_n)$ are related:
\bean
\label{wfs}
[W_\si] = c_n \circ\on{Stab}_\sigma,
\eean
where
$c_n$ denotes the operator of multiplication by $c_n(\bs\Theta)$.

\end{thm}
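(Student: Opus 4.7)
The plan is to combine an $R$-matrix reduction with a direct fixed-point analysis. I aim to show $[W_{\si,I}(\bs\Theta;\zz;h)] = c_\bla(\bs\Theta) \cdot \St_\si(1_I)$ in $H^*_T(T^*\F_\bla)$ for every $I \in \Il$. The first step is to reduce to the single case $\si = \on{id}$: by Theorem \ref{sec 416} and Corollary \ref{thm coh weight}, the geometric and cohomological $R$-matrices coincide, so $\St_\si$ and $[W_\si]$ satisfy the same intertwining relation under $\si \mapsto \si s_i$. Since $c_n$ is a $\si$-independent multiplication operator, the claimed identity is preserved by this move, and as $S_n$ is generated by the simple transpositions, it suffices to prove the base case.

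For $\si = \on{id}$, I would argue by comparing fixed point restrictions. The localization map $H^*_T(T^*\F_\bla) \to \bigoplus_{J \in \Il} H^*_T(x_J)$ is injective because $H^*_T(T^*\F_\bla)$ is a free $\C[\zz;h]$-module. The restriction $\theta_{a,j}|_{x_J}$ takes values in the multiset $\zz_{J_1}\cup\cdots\cup\zz_{J_a}$, so $[W_{\on{id},I}(\bs\Theta;\zz;h)]|_{x_J} = W_{\on{id},I}(\zz_J;\zz;h)$ and $c_\bla(\bs\Theta)|_{x_J} = c_\bla(\zz_J)$. The theorem thus reduces to checking
\[
W_{\on{id},I}(\zz_J;\zz;h) \;=\; c_\bla(\zz_J)\cdot E_{\on{id},I}|_{x_J}
\quad\text{for every } J\in\Il.
\]

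This identity is verified case by case using Lemma \ref{lem W_I}. For $J\not\le_{\on{id}}I$, part (iv) gives $W_{\on{id},I}(\zz_J;\zz;h)=0$, while property (i) of Theorem \ref{thm:MO_stab} forces $E_{\on{id},I}|_{x_J}=0$ because $x_J\notin\on{Slope}_{\on{id},I}$. For $J=I$, part (ii) of the lemma writes $W_{\on{id},I}(\zz_I;\zz;h)$ as $c_\bla(\zz_I)$ times a product of root factors which, after a short sign bookkeeping, matches $\on{sgn}_{\on{id},I}\cdot e_{\on{id},I,-}$, agreeing with property (ii) of Theorem \ref{thm:MO_stab}. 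The only remaining case is $J<_{\on{id}}I$, where $E_{\on{id},I}|_{x_J}$ is not given directly by the axioms. To handle it, I would introduce the candidate class $\tilde E$ corresponding to $[W_{\on{id},I}]/c_\bla$; the fixed-point quotients $W_{\on{id},I}(\zz_J;\zz;h)/c_\bla(\zz_J)$ are polynomial by Lemma \ref{lem W_I}(i), and conditions (ii) and (iii) of Theorem \ref{thm:MO_stab} for $\tilde E$ follow from parts (ii) and (iii) of the lemma respectively. By the uniqueness in Theorem \ref{thm:MO_stab}, $\tilde E = E_{\on{id},I}$, which then yields the missing restriction identities.

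The hard part will be verifying property (i) for $\tilde E$: Lemma \ref{lem W_I}(iv) supplies only fixed-point vanishing, whereas the geometric containment $\on{supp}\,\tilde E\subset\on{Slope}_{\on{id},I}$ is strictly stronger on a non-point-like variety. To close this gap I would invoke an equivalent fixed-point characterization of the MO stable envelope, in which condition (i) is replaced by a triangularity-plus-degree condition on all off-diagonal restrictions together with the normalization at $x_I$, valid for Lagrangian classes on smooth symplectic quiver varieties. A related subtlety, handled by the same analysis, is the integrality of $[W_{\on{id},I}]/c_\bla$ in non-localized cohomology: since $c_\bla$ is not a zero divisor and the quotient has polynomial fixed-point restrictions satisfying the GKM compatibility for $T^*\F_\bla$, it lifts to a genuine class in $H^*_T(T^*\F_\bla)$.
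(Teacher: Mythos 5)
Your reduction to $\si=\on{id}$ via the coincidence of geometric and cohomological $R$-matrices is sound, and your cases $J\not\leq_{\on{id}}I$ and $J=I$ are handled correctly. The gap is exactly at the step you flag as the hard part, and your proposed patch does not close it: there is no ``equivalent fixed-point characterization'' of the stable envelope consisting of normalization at $x_I$ plus fixed-point triangularity plus the degree bound (iii). Here is a concrete counterexample. Take $N=2$, $n=3$, $\bla=(1,2)$, so $T^*\F_\bla=T^*\PP^{\>2}$, $\dim\F_\bla=2$, and $\leq_{\on{id}}$ is a total order on $\Il$ with maximal element $I=(\{3\},\{1,2\})$. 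Let $\gamma=\gamma_{1,1}$ be the Chern root of the tautological line bundle $F_1$, and set $\xi=h\,(z_3-\gamma)\in H^*_T(T^*\F_\bla)$, a genuine (non-localized) class. Then $\xi|_{x_I}=0$, while for $J$ with $J_1=\{m\}$, $m=1,2$, one has $\xi|_{x_J}=h\,(z_3-z_m)$, of $\deg_\zz=1<2=\dim\F_\bla$. Hence $E_{\on{id},I}$ and $E_{\on{id},I}+\xi$ are distinct honest classes both satisfying your reduced axiom list (triangularity is vacuous here since all $J\leq_{\on{id}}I$); only the support condition (i) distinguishes them. Any correct restriction-only characterization must include a \emph{divisibility} condition at each $J<_\si I$, namely that $E|_{x_J}$ be divisible by $e_{\si,J,-}$, whose $\zz$-degree is exactly $\dim\F_\bla$ (this is what makes the degree bound bite in the uniqueness argument); and neither Lemma \ref{lem W_I} nor anything else in your proposal establishes such divisibility for $W_{\on{id},I}(\zz_J;\zz;h)/c_\bla(\zz_J)$. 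So your base case is genuinely open.

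It is instructive to see how the paper avoids this issue entirely: it never verifies any axiom for the weight-function classes beyond what Lemma \ref{lem W I sigma} provides, and it never computes the off-diagonal restrictions of either side. Instead it keeps \emph{all} chambers in play simultaneously. Both $\St_\si$ and $[W_\si]$ are upper-triangular with respect to $\leq_\si$ (by axiom (i) of Theorem \ref{thm:MO_stab}, respectively part (iv) of Lemma \ref{lem W I sigma}); the coincidence of the geometric and cohomological $R$-matrices --- the same fact underlying your step 1 --- shows that $f_\si=[W_\si]\circ\St_\si^{-1}$ is independent of $\si$, hence upper-triangular with respect to $\leq_\si$ for \emph{every} $\si$ at once, hence diagonal by Lemma \ref{lem incomp}; the diagonal entries are then matched using only the two normalization statements (part (ii) of Theorem \ref{thm:MO_stab} and of Lemma \ref{lem W I sigma}) together with $c_\bla(\bs\Theta)|_{x_I}=c_\bla(\zz_I)$. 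Your reduction to the single chamber $\on{id}$ discards precisely the multi-chamber leverage that substitutes for the missing support/divisibility verification. If you want to retain your interpolation strategy, you must prove the divisibility of the off-diagonal restrictions of the weight functions by $e_{\on{id},J,-}$; this is true but requires substantial additional work not contained in this paper's lemmas.
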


Theorem \ref{main thm} is proved in Section \ref{sec trian}.

\begin{rem}
In \cite{RTV} we defined cohomology classes $\kappa_I$ and $\kappa'_I$ in the cohomology
ring of the cotangent bundle of a Grassmannian. The classes were suggested as candidates for the
equivariant fundamental cohomology class of the cotangent bundle of the Schubert
variety $\bar{\Omega}_{\on{id},I}$, which is singular in general.
Theorem \ref{main thm} implies that the classes $\kappa_I$, $\kappa'_I$ defined in \cite{RTV} are supported on $\on{Slope}_{\on{id},I}$.

\end{rem}

\subsection{Triangularity}
\label{sec trian}

The $\C(\zz;h)$-module $H^*_T(\XX_n)\otimes\C(\zz;h)$ has a basis $(w_I(\GG;\zz;h))_{I\in \Il, |\bla|=n}$,
\bea
w_I(\GG;\zz;h) = \prod_{1\leq i< j\leq N} \prod_{x\in \Gamma_i}
\prod_{k\in I_i} \prod_{\ell\in I_j} \frac{x-z_\ell}{z_k-z_\ell}.
\eea
We have
\bea
w_I(\zz_J;\zz;h) = \delta_{I,J}, \qquad I,J\in \Il.
\eea
Thus we have the distinguished basis $(w_I(\GG;\zz;h))_{I\in \Il, |\bla|=n}$ of the $\C(\zz;h)$-module
$H^*_T(\XX_n)\otimes\C(\zz;h)$ and the distinguished basis
$(1_I)_{I\in \Il, |\bla|=n}$ of the $\C(\zz;h)$-module $H^*_T((\XX_n)^A)$.

We denote the matrices of the $\C(\zz;h)$-module isomorphisms $\St_\si$ and $[W_\si]$ with respect to these bases
by $\mc A_\si$ and $\mc B_\si$ respectively.

\begin{lem}
The matrix $\mc A_\si$ is upper-triangular with respect to the partial ordering $\leq_\si$, that is,
if $(\mc A_\si)_{J,I} \ne 0$, then $J\leq_\si I$.

\end{lem}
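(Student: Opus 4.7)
The plan is to reduce the claim to a localization computation at the $A$-fixed points. First I would observe that the basis $(w_I(\GG;\zz;h))$ chosen on the target is (by the defining relation $w_I(\zz_J;\zz;h) = \delta_{I,J}$) dual to the fixed-point basis under the localization map $\on{Loc}$: namely, $w_I|_{x_J} = \delta_{I,J}$. Writing
\begin{equation*}
E_{\si,I} \,=\, \St_\si(1_I) \,=\, \sum_{J} (\mc A_\si)_{J,I}\,w_J
\end{equation*}
in $H^*_T(\XX_n)\otimes\C(\zz;h)$ and restricting to the fixed point $x_K$ therefore gives
\begin{equation*}
(\mc A_\si)_{K,I} \,=\, E_{\si,I}\big|_{x_K}.
\end{equation*}
So the matrix entries of $\mc A_\si$ are exactly the fixed-point restrictions of the stable envelopes.

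Next I would use condition (i) of Theorem \ref{thm:MO_stab}: $\on{supp}\,E_{\si,I}\subset \on{Slope}_{\si,I}$. The step here is to identify which fixed points lie in this closed set. Since each leaf $\on{Leaf}_{\si,J}=\{x : \lim_{\CC_\si} x = x_J\}$ contains $x_J$ as its unique $A$-fixed point, and $\on{Slope}_{\si,I}=\cup_{J\le_\si I}\on{Leaf}_{\si,J}$ is closed (by \cite[Lemma 3.2.7]{MO}), the fixed points contained in $\on{Slope}_{\si,I}$ are precisely the $x_J$ with $J\le_\si I$. Thus for $J\not\le_\si I$ the point $x_J$ lies in the open complement $\XX_n\setminus\on{Slope}_{\si,I}$ on which $E_{\si,I}$ vanishes, so $E_{\si,I}|_{x_J}=0$.

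Combining these two observations yields $(\mc A_\si)_{J,I}=E_{\si,I}|_{x_J}=0$ whenever $J\not\le_\si I$, which is the upper-triangularity asserted. There is no real obstacle here: the only subtlety is the duality identification of $(\mc A_\si)_{J,I}$ with the localization $E_{\si,I}|_{x_J}$, but this is immediate from the defining property $w_I(\zz_J;\zz;h)=\delta_{I,J}$ of the chosen basis, and the rest is a direct consequence of the support axiom of stable envelopes together with the description of fixed points in $\on{Slope}_{\si,I}$.
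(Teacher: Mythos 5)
Your proof is correct and follows the same route as the paper, whose proof is the one-line observation that the lemma follows from the support condition, part (i) of Theorem \ref{thm:MO_stab}. Your write-up simply makes explicit the two steps the paper leaves implicit: that $(\mc A_\si)_{J,I}=E_{\si,I}|_{x_J}$ via the duality $w_I|_{x_J}=\delta_{I,J}$, and that the only $A$-fixed points in $\on{Slope}_{\si,I}$ are the $x_J$ with $J\le_\si I$.
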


\begin{proof}
The lemma follows from part (i) of Theorem \ref{thm:MO_stab}.
\end{proof}

\begin{lem}
The matrix $\mc B_\si$ is upper-triangular with respect to the partial ordering $\leq_\si$,
that is, if $(\mc B_\si)_{J,I} \ne 0$, then $J\leq_\si I$.

\end{lem}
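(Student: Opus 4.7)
The plan is to identify the matrix entries $(\mc B_\si)_{J,I}$ with the values $W_{\si,I}(\zz_J;\zz;h)$ and then invoke part (iv) of Lemma \ref{lem W I sigma} directly.

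First I would observe that the basis $\{w_I\}$ is \emph{dual to the fixed point basis under localization}. Indeed, the defining property $w_I(\zz_J;\zz;h)=\delta_{I,J}$ together with the fact that the localization $w_I|_{x_J}$ is obtained by substituting the Chern roots $\Gamma_i$ with $\zz_{J_i}$ means that $w_I|_{x_J}=\delta_{I,J}$. Hence, for any class $c=\sum_K c_K w_K$ expressed in this basis, the coefficient $c_J$ equals the fixed-point restriction $c|_{x_J}$.

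Next I would compute the localization of $[W_\si](1_I)=[W_{\si,I}(\bs\Theta;\zz;h)]$ at $x_J$. At the fixed point $x_J$, the bundle $F_a$ is the coordinate subspace spanned by $u_j$ for $j\in J_1\cup\dots\cup J_a$, so the Chern roots $\theta_{a,1},\dots,\theta_{a,\la^{(a)}}$ restrict to the multiset $\zz_{J_1}\cup\dots\cup\zz_{J_a}$. Under the substitution $t^{(a)}\mapsto\cup_{k=1}^a\zz_{J_k}$ used to define $W_{\si,I}(\zz_J;\zz;h)$, we obtain
\begin{equation*}
[W_{\si,I}(\bs\Theta;\zz;h)]\bigr|_{x_J}\,=\,W_{\si,I}(\zz_J;\zz;h).
\end{equation*}
Combining this with the previous paragraph gives the explicit formula $(\mc B_\si)_{J,I}=W_{\si,I}(\zz_J;\zz;h)$.

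Finally, by part (iv) of Lemma \ref{lem W I sigma}, we have $W_{\si,I}(\zz_J;\zz;h)=0$ unless $J\leq_\si I$. Therefore $(\mc B_\si)_{J,I}\ne 0$ implies $J\leq_\si I$, which is exactly the upper triangularity claim. There is no real obstacle here: the only minor subtlety is checking that the substitution defining $W_{\si,I}(\zz_J;\zz;h)$ agrees with the geometric fixed-point restriction of the Chern roots $\bs\Theta$, and this is immediate from the description of $x_J$ as a coordinate flag.
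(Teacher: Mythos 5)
Your proposal is correct and follows the same route as the paper, whose proof is the one-line citation of part (iv) of Lemma \ref{lem W I sigma}; you have simply made explicit the (true and routine) identification $(\mc B_\si)_{J,I}=W_{\si,I}(\zz_J;\zz;h)$ via the duality $w_I(\zz_J;\zz;h)=\delta_{I,J}$ and the fixed-point restriction of the Chern roots $\bs\Theta$. No gaps.
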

\begin{proof}
The lemma follows from part (iv) of Lemma \ref{lem W I sigma}.
\end{proof}

Define the $\C(\zz;h)$-module isomorphism
\bea
f_\si =[W_\si]\circ (\St_\si)^{-1} : H^*_T(\XX_n)\otimes\C(\zz;h)\to H^*_T(\XX_n)\otimes\C(\zz;h).
\eea

\begin{cor}
\label{cor upper}
For any $\si\in S_n$, the matrix of $f_\si$ is upper-triangular with respect to the partial ordering $\leq_\si$.

\end{cor}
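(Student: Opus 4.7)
The plan is to derive the corollary as a purely formal consequence of the two triangularity lemmas just proved. By definition $f_\si = [W_\si]\circ(\St_\si)^{-1}$, so its matrix in the distinguished bases is $\mc B_\si\,\mc A_\si^{-1}$. It therefore suffices to show that matrices which are upper-triangular with respect to $\leq_\si$, and whose diagonal entries are nonzero (hence invertible as elements of $\C(\zz;h)$), form a group under matrix multiplication.

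Closure under multiplication is immediate: if $M_{J,K}=0$ unless $J\leq_\si K$ and $N_{K,I}=0$ unless $K\leq_\si I$, then any nonzero summand of $(MN)_{J,I}=\sum_K M_{J,K}N_{K,I}$ forces $J\leq_\si K\leq_\si I$, whence $J\leq_\si I$ by transitivity. For closure under inversion I would fix a linear extension $T$ of the partial order $\leq_\si$ to a total order on $\{I\in\Il\}$; any matrix upper-triangular in $\leq_\si$ is automatically upper-triangular in $T$, and for honest upper-triangular matrices with respect to a total order the inverse is again upper-triangular as soon as the diagonal is invertible. To transfer this back to $\leq_\si$: if the inverse matrix had a nonzero $(J,I)$-entry with $J\not\leq_\si I$, I would choose a linear extension $T$ of $\leq_\si$ placing $I$ before $J$, which exists precisely because $J\not\leq_\si I$, yielding a contradiction with upper-triangularity in $T$.

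Combining these two closure properties with the two preceding lemmas — which give upper-triangularity of $\mc A_\si$ (with invertible diagonal by Theorem \ref{thm:MO_stab}(ii), since $e_{\si,I,-}$ is a nonzero product of nonzero weights) and of $\mc B_\si$ (with invertible diagonal by Lemma \ref{lem W I sigma}(ii)) — one concludes that $\mc B_\si\,\mc A_\si^{-1}$ is upper-triangular with respect to $\leq_\si$, which is exactly the statement of the corollary. The only step that requires more than one line is the linear-extension argument used for inversion, and this is entirely standard; I do not anticipate any serious obstacle.
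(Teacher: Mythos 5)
Your proposal is correct and matches the paper's (implicit) argument: the corollary is stated as an immediate formal consequence of the two preceding triangularity lemmas, with the matrix of $f_\si$ being $\mc B_\si\,\mc A_\si^{-1}$ and triangularity preserved under products and inverses. Your linear-extension argument for inversion and your verification that the diagonal entries of $\mc A_\si$ and $\mc B_\si$ are nonzero (via Theorem \ref{thm:MO_stab}(ii) and Lemma \ref{lem W I sigma}(ii)) simply make explicit the standard details the paper leaves unstated.
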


\begin{lem}
The operator $f_\si$ does not depend on $\si\in S_n$.

\end{lem}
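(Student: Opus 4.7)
The plan is to show that $f_\sigma = f_{\sigma s_i}$ for every simple transposition $s_i \in S_n$, and then conclude by noting that the simple transpositions generate $S_n$.

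The key input is that the transition $R$-matrices for the two families of maps agree. By Theorem \ref{sec 416},
\[
\St_{\sigma s_i}^{-1}\circ \St_\sigma \;=\; R_{\sigma s_i,\sigma} \;=\; R^{(\sigma(i),\sigma(i+1))}(z_{\sigma(i)}-z_{\sigma(i+1)}),
\]
and by Corollary \ref{thm coh weight},
\[
[W_{\sigma s_i}]^{-1}\circ [W_\sigma] \;=\; \bar R_{\sigma s_i,\sigma} \;=\; R^{(\sigma(i),\sigma(i+1))}(z_{\sigma(i)}-z_{\sigma(i+1)}).
\]
Hence $\St_{\sigma s_i}^{-1}\circ \St_\sigma = [W_{\sigma s_i}]^{-1}\circ [W_\sigma]$, which I would immediately rearrange to
\[
[W_\sigma]\circ \St_\sigma^{-1} \;=\; [W_{\sigma s_i}]\circ \St_{\sigma s_i}^{-1},
\]
i.e.\ $f_\sigma = f_{\sigma s_i}$.

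Since the simple transpositions $s_1,\dots,s_{n-1}$ generate $S_n$, any $\sigma\in S_n$ can be reached from $\on{id}$ by a sequence of right multiplications by simple transpositions, so iterating the previous identity gives $f_\sigma = f_{\on{id}}$ for every $\sigma$.

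I do not expect any real obstacle: the entire argument amounts to observing that Theorem \ref{sec 416} and Corollary \ref{thm coh weight} produce literally the same $R$-matrix (the one from formula \Ref{RR}), so the wall-crossing behaviour of $\St_\sigma$ and $[W_\sigma]$ cancels in the composition $[W_\sigma]\circ \St_\sigma^{-1}$. The only mild subtlety worth flagging is that the equality $f_\sigma=f_{\sigma s_i}$ is an identity of $\C(\zz;h)$-module automorphisms of $H^*_T(\XX_n)\otimes \C(\zz;h)$, where the inverses $\St_\sigma^{-1}$ and $[W_\sigma]^{-1}$ exist (cf.\ the localization argument preceding Corollary \ref{thm coh weight}); no statement is being made at the integral level until one invokes Theorem \ref{main thm}.
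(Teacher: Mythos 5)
Your proof is correct and takes essentially the same route as the paper: the paper's own argument likewise cancels the two equal families of $R$-matrices, writing $R_{\si',\si}=[W_{\si'}]^{-1}\circ[W_\si]=\St_{\si'}^{-1}\circ(f_{\si'}^{-1}\circ f_\si)\circ\St_\si$ together with $R_{\si',\si}=\St_{\si'}^{-1}\circ\St_\si$ to conclude $f_{\si'}^{-1}\circ f_\si=\on{Id}$. Your reduction to adjacent chambers $\si,\si s_i$ followed by generation of $S_n$ by simple transpositions merely makes explicit the wall-crossing chain implicit in the paper's assertion for general pairs $\si',\si$, since Theorem \ref{sec 416} and Corollary \ref{thm coh weight} are stated only for walls.
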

\begin{proof} For $\si',\si\in S_n$ we have
\bea
R_{\si',\si} = [W_{\si'}]^{-1}\circ [W_\si] = \St_{\si'}^{-1}\circ (f_{\si'}^{-1}\circ f_\si )\circ \St_\si,
\qquad
R_{\si',\si} = \St_{\si'}^{-1}\circ\St_\si
\eea
Hence $f_{\si'}^{-1} \circ f_\si = 1$ and $f=f_\si$ does not depend on $\si$.
\end{proof}

\begin{lem}
The matrix of the operator $f$ is diagonal.

\end{lem}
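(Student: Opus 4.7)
The plan is to exploit the fact, just established, that the operator $f = f_\sigma = [W_\sigma]\circ (\St_\sigma)^{-1}$ is the \emph{same} operator for every $\sigma \in S_n$, combined with the triangularity statement in Corollary \ref{cor upper}. First I would record that, by Corollary \ref{cor upper}, the matrix of $f$ is upper-triangular with respect to the partial ordering $\leq_\sigma$, and crucially this holds \emph{simultaneously} for every $\sigma \in S_n$, since $f$ is $\sigma$-independent. Concretely, if $(J,I)$ is an off-diagonal entry (i.e. $J\ne I$) for which the matrix coefficient of $f$ is nonzero, then we must have $J \leq_\sigma I$ for every $\sigma \in S_n$.

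The next step is to invoke Lemma \ref{lem incomp}, which says that for any two distinct $I,J \in \Il$ one can choose $\sigma \in S_n$ so that $J \not\leq_\sigma I$. This immediately contradicts the conclusion of the previous paragraph, forcing the matrix coefficient $(J,I)$ of $f$ to vanish for all $J \ne I$. Hence only the diagonal entries can be nonzero, so the matrix of $f$ is diagonal.

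I do not foresee a real obstacle here: the substance of the argument is entirely contained in the previous lemmas. The only subtlety to spell out is that the partial orderings $\leq_\sigma$ genuinely differ as $\sigma$ ranges over $S_n$, and that their common ``triangularity envelope'' reduces to the diagonal precisely by Lemma \ref{lem incomp}. No calculation with the explicit weight functions or stable envelopes is needed at this point; the proof is a two-line consequence of $\sigma$-independence of $f$, triangularity with respect to each $\leq_\sigma$, and the incomparability statement of Lemma \ref{lem incomp}.
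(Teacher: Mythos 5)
Your proof is correct and is exactly the paper's argument: upper-triangularity of $f$ with respect to every ordering $\leq_\sigma$ (via Corollary \ref{cor upper} and the $\sigma$-independence of $f$), combined with the incomparability statement of Lemma \ref{lem incomp}, forces all off-diagonal entries to vanish. You have merely spelled out the same two-line deduction the paper gives.
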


\begin{proof}
For any $\si\in S_n$ the matrix of $f$ is upper-triangular by Corollary \ref{cor upper}. Lemma \ref{lem incomp}
implies that the matrix is diagonal.
\end{proof}

To prove Theorem \ref{main thm} it is enough to evaluate the diagonal entries of the matrix of $f$ and of the matrices $\mc A_\si, \mc B_\si$
and then check that the diagonal entries satisfy \Ref{wfs}.
The diagonal entries of $\mc A_\si$ are given by part (ii) of Theorem \ref{thm:MO_stab} and the diagonal entries of $\mc B_\si$
are given by part (ii) of Lemma \ref{lem W I sigma}.
Observing that
$c_\bla(\bs \Theta)$ restricted to the fixed point $x_I$ is $c_\bla(\zz_I)$, and that
\bea
\prod_{a<b} \mathop{\prod_{\si(i)\in I_a}}_{\si(j)\in I_b} \Big(
\prod_{i<j} (z_{\si(i)}-z_{\si(j)}) \prod_{i>j} (z_{\si(i)}-z_{\si(j)}-h)\Big)=
\left( \on{sgn}_{\si,I} e^{hor}_{\si,I,-}\right) \cdot e^{ver}_{\si,I,-}
\eea
we obtain that the diagonal entries indeed satisfy \Ref{wfs}. Theorem \ref{main thm} is proved.

\section{Orthogonality}
\label{sec: ortho}

\subsection{Orthogonality on $\F_\bla$}

Consider the bilinear form on $H_{A}^*(\F_\bla)$ defined by
$
(f,g)_{\F_\bla} = \int_{\F_\bla} fg,
$
where the equivariant integral on $\F_\bla$ can be expressed via
localization by
\bea
\int_{\F_\bla} \alpha(\GG;\zz) = \sum_{I\in \II_\lambda} \,\frac{\alpha(\zz_I;\zz)}{\prod_{a<b} \prod_{i\in I_a, j\in I_b} (z_j-z_i)}.
\eea
Note that the denominator is the equivariant Euler class of the tangent space to $\F_\bla$ at the fixed point $x_I$.

Let $\si_0$ be the longest permutation in $S_n$, that is, $\si_0: i\mapsto n+1-i$. It is well known in Schubert calculus that
\bea
( [\bar{\Omega}_{\on{id},J}], [\bar{\Omega}_{\si_0,K}] )_{\F_\bla} = \delta_{J,K},
\eea
where $\bar \Omega_{\si,I}$ is the closure of the Schubert cell $\Omega_{\si,I}$.
In Theorem \ref{thm:Torto} below we will show the $T^*\F_\bla$ version of this orthogonality statement.

\subsection{Orthogonality on $T^*\F_\bla$}

Consider the bilinear form on $H_{T}^*(T^*\F_\bla)$ defined by
\bea
(f,g)_{T^*\F_\bla} = \int_{T^*\F_\bla} fg,
\eea
where the equivariant integral on $T^*\F_\bla$ is {\em defined} via localization by
\bea
\int_{T^*\F_\bla} \alpha(\GG;\zz;h) = \sum_{I\in\II_\lambda}\, \frac{\alpha(\zz_I;\zz;h)}{\prod_{a<b} \prod_{i\in I_a, j\in I_b} (z_j-z_i)(z_i-z_j-h)}.
\eea
Note that the denominator is the equivariant Euler class of the tangent space to $T^*\F_\bla$ at the fixed point $x_I$.
This bilinear form takes values in $\C(\zz;h)$.

\begin{thm} \label{thm:Torto}
We have
\bea
(\St_{\on{id}}(1_J) , \St_{\si_0}(1_K))_{T^*\F_\bla} = \delta_{J,K} \cdot (-1)^{ \dim \F_\bla }.
\eea
\end{thm}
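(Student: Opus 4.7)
The plan is to compute the bilinear form $(\St_{\on{id}}(1_J), \St_{\si_0}(1_K))_{T^*\F_\bla}$ by localization at the $A$-fixed points and reduce to the weight function orthogonality Lemma~\ref{lem:orto}. The key input is Theorem~\ref{main thm}: since $c_\bla(\bs\Theta)|_{x_I} = c_\bla(\zz_I)$ is nonzero for every $I\in\Il$, we may divide and get
\[
\St_\si(1_I)\big|_{x_J} \;=\; \frac{W_{\si,I}(\zz_J;\zz;h)}{c_\bla(\zz_J)}
\]
for all $\si$ and $I,J\in\Il$.

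Plugging these restrictions into the localization formula defining the pairing $(\,,\,)_{T^*\F_\bla}$ gives
\[
(\St_{\on{id}}(1_J),\St_{\si_0}(1_K))_{T^*\F_\bla} \;=\; \sum_{I\in\Il}\frac{W_{\on{id},J}(\zz_I;\zz;h)\,W_{\si_0,K}(\zz_I;\zz;h)}{c_\bla(\zz_I)^2\,\prod_{a<b}\prod_{i\in I_a,\,j\in I_b}(z_j-z_i)(z_i-z_j-h)}.
\]
The next step is a bookkeeping of signs. Using the notation of Lemma~\ref{lem:orto}, one has $\prod_{a<b}\prod_{i\in I_a,\,j\in I_b}(z_i-z_j-h) = Q(\zz_I)$, while reversing signs gives $\prod_{a<b}\prod_{i\in I_a,\,j\in I_b}(z_j-z_i) = (-1)^{\sum_{a<b}\la_a\la_b}R(\zz_I) = (-1)^{\dim\F_\bla}R(\zz_I)$, since $|I_a|=\la_a$ and $\dim\F_\bla=\sum_{a<b}\la_a\la_b$. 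Therefore the denominator equals $(-1)^{\dim\F_\bla}R(\zz_I)Q(\zz_I)$.

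Substituting this back, the sum becomes
\[
(\St_{\on{id}}(1_J),\St_{\si_0}(1_K))_{T^*\F_\bla} \;=\; (-1)^{\dim\F_\bla}\sum_{I\in\Il}\frac{W_{\on{id},J}(\zz_I;\zz;h)\,W_{\si_0,K}(\zz_I;\zz;h)}{R(\zz_I)\,Q(\zz_I)\,c_\bla(\zz_I)^2},
\]
and an application of Lemma~\ref{lem:orto} to the right-hand sum yields $(-1)^{\dim\F_\bla}\dl_{J,K}$, as required. The only delicate point in the plan is the sign count converting the tangent-space Euler factor at $x_I$ into the product $R(\zz_I)Q(\zz_I)$ used in Lemma~\ref{lem:orto}; once this is executed correctly the statement is an immediate corollary of Theorem~\ref{main thm} together with Lemma~\ref{lem:orto}.
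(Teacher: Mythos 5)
Your proposal is correct and is essentially the paper's own proof: both compute the pairing by localization, substitute the fixed-point restrictions $\St_\si(1_I)|_{x_J}=W_{\si,I}(\zz_J;\zz;h)/c_\bla(\zz_J)$ obtained from Theorem~\ref{main thm}, and conclude by Lemma~\ref{lem:orto}. Your explicit sign count $\prod_{a<b}\prod_{i\in I_a,\,j\in I_b}(z_j-z_i)=(-1)^{\dim\F_\bla}R(\zz_I)$ is exactly the step the paper performs (silently) in its first equality, and it is carried out correctly.
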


\begin{proof}

We have
\bea
&&
(\St_{\on{id}}(1_J) , \St_{\si_0}(1_K))_{T^*\F_\bla} =
(-1)^{ \dim \F_\bla }\sum_{I\in \II_\bla} \frac{ ( \St_{\on{id}}(1_J) \St_{\si_0}(1_K))|_{x_I} }{R(z_I)Q(z_I)} =
\\
&&
=(-1)^{ \dim \F_\bla } \sum_{I\in\Il} \frac{W_{\on{id},J}(\zz_I;\zz;h) W_{\si_0,K}(\zz_I;\zz,h)}
{R(\zb_I)\,Q(\zb_I)\>c_\bla(\zb_I)^2} = \delta_{J,K} \cdot (-1)^{ \dim \F_\bla }.
\eea
where the first equality is by definition, the second by Theorem \ref{main thm}
and the third by Lemma~\ref{lem:orto}.
\end{proof}

\section{Yangian actions}
\label{sec Yang}

\subsection{Yangian $\Yn$}
\label{sec yangian}

The Yangian $\Yn$ is the unital associative algebra with generators
\,$T_{\ij}\+s$ \,for \,$i,j=1\lc N$, \;$s\in\Z_{>0}$, \,subject to relations
\vvn.3>
\beq
\label{ijkl}
(u-v)\>\bigl[\<\>T_{\ij}(u)\>,T_{\kl}(v)\<\>\bigr]\>=\,
T_{\kj}(v)\>T_{\il}(u)-T_{\kj}(u)\>T_{\il}(v)\,,\qquad i,j,k,l=1\lc N\,,
\vv.3>
\eeq
where
\vvn-.7>
\be
T_{\ij}(u)=\dl_{\ij}+\sum_{s=1}^\infty\,T_{\ij}\+s\>u^{-s}\>.
\vv.2>
\ee

\vsk.2>
The Yangian $\Yn$ is a Hopf algebra with the coproduct
\;$\Dl:\Yn\to\Yn\ox\Yn$ \,given by
\vvn.16>
\;$\Dl\bigl(T_{\ij}(u)\bigr)=\sum_{k=1}^N\,T_{\kj}(u)\ox T_{\ik}(u)$ \,for
\,$i,j=1\lc N$\>. The Yangian $\Yn$ contains, as a Hopf subalgebra, the universal enveloping algebra \>$\Ugln$ of the Lie algebra $\gln$.
The embedding is given by \,$e_{\ij}\mapsto T_{\ji}\+1$, where $e_{\ij}$ are standard standard generators of \,$\gln$.

\vsk.2>
Notice that \,$\bigl[\<\>T_{\ij}\+1,T_{\kl}\+s\<\>\bigr]\>=\,
\dl_{\il}\>T_{\kj}\+s-\dl_{\jk}\>T_{\il}\+s$ \,for \,$i,j,k,l=1\lc N$,
\;$s\in\Z_{>0}$\,, \,which implies that the Yangian \>$\Yn$ is generated by
the elements \,$T_{\ii+1}\+1\>,\>T_{\ioi}\+1$, \;$i=1\lc N-1$, \,and
\,$T_{1,1}\+s$, \;$s>0$.

\subsection{Algebra \,$\ty\>$}
\label{sec tilde Y}

In this section we follow \cite[Section 3.3]{GRTV}.
In formulas of that Section 3.3 we replace $h$ with $-h$.

Let \,$\ty$ be the subalgebra of \,$\Yn\ox\C[h]$ generated over \,$\C\>$
by \,$\C[h]$ and the elements \,$(-h)^{s-1}\>T_{\ij}\+s$ \,for \,$i,j=1\lc N$,
\;$s>0$. Equivalently, the subalgebra \,$\ty$ \,is generated over \,$\C\>$ by
\,$\C[h]$ and the elements \,$T_{\ii+1}\+1\>,\>T_{\ioi}\+1$,
\;$i=1\lc N-1$, \,and \,$(-h)^{s-1}\>T_{1,1}\+s$, \;$s>0$.


For \;$p=1\lc N$, \;$\ib=\{1\leq i_1<\dots<i_p\leq N\}$,
\;$\jb=\{1\leq j_1<\dots<j_p\leq N\}$, define
\vvn-.3>
\be
M_{\ijb}(u) =\sum_{\si\in S_p}(-1)^\si\,
T_{i_1,j_{\si(1)}}(u)\dots T_{i_p,j_{\si(p)}}(u-p+1)\,.
\ee
Introduce the series \,$A_1(u)\lc A_N(u)$, \,$E_1(u)\lc E_{N-1}(u)$,
\,$F_1(u)\lc F_{N-1}(u)$:
\vvn.1>
\begin{gather}
\label{A}
A_p(u)\,=\,M_{\iib}(-u/h)\,=\,
1+\sum_{s=1}^\infty\,(-h)^s\>A_{p,s}\,u^{-s}\,,
\\[3pt]
E_p(u)\,=\,-h^{-1}M_{\jib}(-u/h)\>\bigl(M_{\iib}(-u/h)\bigr)^{-1}\,=\,
\sum_{s=1}^\infty\,(-h)^{s-1}\>E_{p,s}\,u^{-s}\,,
\label{EF}
\\[3pt]
F_p(u)\,=\,-h^{-1}\bigl(M_{\iib}(-u/h)\bigr)^{-1}M_{\ijb}(-u/h)\,=\,
\sum_{s=1}^\infty\,(-h)^{s-1}\>F_{p,s}\,u^{-s}\,,
\notag
\\[-20pt]
\notag
\end{gather}
where  in formulas \Ref{A} and \Ref{EF} we have \,$\ib=\{1\lc p\>\}$\>, \,$\jb=\{1\lc p-1,p+1\>\}$\>.
Observe that \,$E_{p,1}=T_{\pop}\+1$\>, \,$F_{p,1}=T_{\ppo}\+1$
\,and \,$A_{1,s}=T_{1,1}\+s$, so the coefficients of the series
\,$E_p(u)$, \,$F_p(u)$ and \,$h^{-1}(A_p(u)-1\bigr)$ together with \,$\C[h]$
generate \,$\ty$.
In what follows we will describe actions of the algebra \,$\ty$ by using series
\Ref{A}, \Ref{EF}.

\subsection{ $\ty$-action on $(\C^N)^{\otimes n}\otimes\C[\zz;h]$}
\label{sec yang act C^N}

Let $\C[\zz;h]$ act on $(\C^N)^{\otimes n}\otimes\C[\zz;h]$
\vvn.2>
by multiplication. Set
\begin{align}
\label{Lpm}
L(u)\,&{}=\,(u-z_n-h\<\>P^{(0,n)})\dots(u-z_1-h\<\>P^{(0,1)})\,,
\end{align}
where the factors of \,$\C^N\!\ox (\C^N)^{\otimes n}$ are labeled by \,$0,1\lc n$.
\vvn.1>
$L(u)$ is a polynomial in \,$u,\zz,h$ \,with values
in \,$\End(\C^N\!\ox (\C^N)^{\otimes n})$. We consider $L(u)$ as an $N\!\times\!N$ matrix
with \,$\End(V)\ox\C[u;\zz; h]\>$-valued entries \,$L_{\ij}(u)$.

\begin{prop} [Proposition 4.1 in \cite{GRTV}]

The assignment
\vvn-.2>
\beq
\label{pho}
\pho\bigl(T_{\ij}(-u/h)\bigr)\,=\,
L_{\ij}(u)\,\prod_{a=1}^n\,(u-z_a)^{-1}
\vv-.1>
\eeq
defines the action of the algebra \,$\ty$
\vvn.2>
on \,$(\C^N)^{\otimes n}\otimes\C[\zz;h]$\,. Here the right-hand side of \Ref{pho} is a series
in \,$u^{-1}$ with coefficients in \,$\End((\C^N)^{\otimes n})\otimes\C[\zz;h])$\>.
\end{prop}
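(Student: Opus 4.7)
The plan is to verify the proposition by the standard $RTT$/Lax-operator argument. Reformulating \Ref{ijkl} in matrix form, the Yangian relation is equivalent to the equality
\begin{equation*}
\mc R(u-v)\, T_1(u)\, T_2(v) \,=\, T_2(v)\, T_1(u)\, \mc R(u-v)
\end{equation*}
in $\End(\C^N\<\<\ox\C^N)\ox Y(\gln)[[u^{-1},v^{-1}]]$, where $\mc R(u) = u\cdot\on{Id} - h\<\>P$ is the rational $R$-matrix (the same $R$ as in \Ref{RR} before the normalizing scalar), and the subscripts label two auxiliary copies of $\C^N$. Thus it suffices to show that the operator-valued series $\Tilde L(u) = L(u)\prod_{a=1}^n(u-z_a)^{-1}$, acting on the auxiliary factor $0$ with coefficients in $\End((\C^N)^{\otimes n})\ox\C[\zb;h]$, satisfies the same $RTT$-relation with $\mc R$, after the spectral-parameter substitution $u\mapsto -u/h$; and then to verify that the resulting action restricts to the subalgebra $\ty$.

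For the $RTT$ verification, I would observe that each elementary factor $L^{(a)}(u) := u - z_a - h\<\>P^{(0,a)}$ is precisely $\mc R(u-z_a)$ regarded as an operator on the auxiliary space $0$ and the $a$-th quantum factor. Because $\mc R$ satisfies the Yang-Baxter equation
\begin{equation*}
\mc R_{12}(u-v)\,\mc R_{13}(u-z_a)\,\mc R_{23}(v-z_a)\,=\,\mc R_{23}(v-z_a)\,\mc R_{13}(u-z_a)\,\mc R_{12}(u-v)\,,
\end{equation*}
each $L^{(a)}(u)$ (with two auxiliary copies $0,0'$) satisfies $\mc R_{00'}(u-v)\<\>L^{(a)}_0(u)\<\>L^{(a)}_{0'}(v)=L^{(a)}_{0'}(v)\<\>L^{(a)}_0(u)\<\>\mc R_{00'}(u-v)$. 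The standard fusion principle (if $L$ and $L'$ share the auxiliary space but act on disjoint quantum spaces and both satisfy $RTT$ with $\mc R$, then so does their product $L\<\>L'$) then gives the $RTT$-relation for $L(u)=L^{(n)}(u)\cdots L^{(1)}(u)$, and the scalar $\prod_a(u-z_a)^{-1}$ commutes out of both sides.

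Finally I would check that the action restricts to $\ty$. From the factored expression for $L(u)$ one has
\begin{equation*}
L(u)\,\equiv\,\prod_{a=1}^n(u-z_a)\cdot\on{Id}\pmod h,
\end{equation*}
so that $\Tilde L_{\ij}(u)-\dl_{\ij}=\sum_{s\ge 1}\ell_{\ij,s}\>u^{-s}$ with every $\ell_{\ij,s}$ divisible by $h$ in $\End((\C^N)^{\otimes n})\ox\C[\zb;h]$. Since $T_{\ij}(-u/h)=\dl_{\ij}+\sum_{s\ge 1}(-h)^s\<\>T_{\ij}\+s\>u^{-s}$, this gives $\pho\bigl((-h)^{s-1}T_{\ij}\+s\bigr)=-\>h^{-1}\ell_{\ij,s}\in\End((\C^N)^{\otimes n})\ox\C[\zb;h]$, as required by the definition of $\ty$ in Section \ref{sec tilde Y}. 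The essential content is the Yang-Baxter equation for $\mc R$; the main bookkeeping obstacle is matching the spectral-parameter convention ($u\mapsto-u/h$) so that the resulting commutation relations are exactly \Ref{ijkl} and not a rescaled variant.
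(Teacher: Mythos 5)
Your argument is correct and is essentially the same as the paper's: the paper gives no proof of its own, quoting Proposition 4.1 of \cite{GRTV}, where the statement rests on exactly the mechanism you describe --- $L(u)$ is a product of rational $R$-matrices $\mc R_{0a}(u-z_a)$ acting on the auxiliary space and distinct tensor factors, the Yang--Baxter equation yields the $RTT$-relation for $L(u)$ and hence for $L(u)\prod_{a=1}^n(u-z_a)^{-1}$, the substitution $u\mapsto-u/h$ turns that relation into \Ref{ijkl}, and the congruence $L(u)\equiv\prod_{a=1}^n(u-z_a)\cdot\on{Id}\pmod h$ shows the images of $(-h)^{s-1}\>T_{\ij}\+s$ are polynomial in $h$, so the action restricts to $\ty$. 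The only loose phrase is your opening claim that \Ref{ijkl} is ``equivalent'' to the $RTT$-relation with $\mc R(u)=u\cdot\on{Id}-h\<\>P$ --- literally the equivalence holds only after the rescaling $u\mapsto-u/h$ --- but since you carry out that substitution explicitly afterwards, nothing is missing.
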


Under this action, the subalgebra $U(\gln)\subset\ty$ acts on $(\C^N)^{\otimes n}\otimes\C[\zz;h]$
in the standard way: any element \,$x\in\gln$ \,acts as $x^{(1)}\lsym+x^{(n)}$.

\smallskip
The action $\pho$ was denoted in \cite{GRTV} by $\pho^+$.
\smallskip
After the identification
$H_T^*((\XX_n)^A) = (\C^N)^{\otimes n}\otimes \C[\zz;h]$, the action $\pho$ defines
the $\ty$-module structure on $H_T^*((\XX_n)^A)$. This $\ty$-module structure on $H_T^*((\XX_n)^A)$ coincides with
the Yangian module structure on $H_T^*((\XX_n)^A)$ introduced in \cite[Section 5.2.6]{MO}.

\subsection{$H^*_T(\XX_n)$ as a $\ty$-module according to \cite{GRTV}}
\label{sec cohom and Yang}

We define the \>$\ty$-module structure $\rho$ on \;$H_T^*(\XX_n)$ by formulas \Ref{Arho}, \Ref{rho E}, \Ref{rho F}.
Notice that this \>$\ty$-module structure was denoted in \cite{GRTV} by $\rho^-$ and $h$ in \cite{GRTV} is replaced with $-h$.
We define
\;$\rho\bigl(A_p(u)\bigr): H^*_T(T^*\F_\bla)\to H^*_T(T^*\F_\bla)$ by
\vvn.2>
\beq
\label{Arho}
\rho\bigl(A_p(u)\bigr)\>:\>[f]\;\mapsto\,
\Bigl[\>f(\GG;\zb;h)\;\prod_{a=1}^p\,\prod_{i=1}^{\la_p}
\;\Bigl(1-\frac h{u-\ga_{\pci}}\Bigr)\Bigr]\,,
\vv.3>
\eeq
for \,$p=1\lc N$. In particular,
\vvn.4>
\bean
\label{rhoX inf}
\rho(\Xin_i)\>:\>[f]\;\mapsto\,
[\>(\ga_{i,1}\lsym+\<\ga_{i,\la_i})\>f(\GG;\zb;h)]\,,\qquad i=1\lc N\,.
\kern-4em
\eean
Let \;$\aal_1\lc\aal_{N-1}$ \,be simple roots,
\,$\aal_p=(0\lc0,1,\<-\>1,0\lc0)$, with \,$p-1$ first zeros.
We define
\vvn-.6>
\bea
\rho\bigl(E_p(u)\bigr)\<\>:\>H^*_T(T^*\F_{\bla\<\>-\aal_p})\,\mapsto\,H^*_T(T^*\F_\bla)\,,
\eea
\bean
\label{rho E}
\phantom{aaaaa}
\rho\bigl(E_p(u)\bigr)\<\>:\>[f]\;\mapsto\,\biggl[\;
\sum_{i=1}^{\la_p}\;\frac{f(\GG^{\ipi};\zb;h)}{u-\ga_{\pci}}\,\;
\prod_{\satop{j=1}{j\ne i}}^{\la_p}\,\frac1{\ga_{\pcj}-\ga_{\pci}}\;
\prod_{k=1}^{\la_{p+1}\!}\,(\ga_{\pci}-\ga_{p+1,k}-h)\,\biggr]\,,
\eean
\vv-.3>
\bea
\rho\bigl(F_p(u)\bigr)\<\>:\>H^*_T(T^*\F_{\bla\<\>+\aal_p})\,\mapsto\,H^*_T(T^*\F_{\bla})\,,
\eea
\bean
\label{rho F}
\phantom{aaaaa}
\rho\bigl(F_p(u)\bigr)\<\>:\>[f]\;\mapsto\,\biggl[\;
\sum_{i=1}^{\la_{p+1}\!}\;\frac{f(\GG^{\ip};\zb;h)}{u-\ga_{\poi}}\,\;
\prod_{\satop{j=1}{j\ne i}}^{\la_{p+1}\!}\,\frac1{\ga_{\poi}-\ga_{\poj}}\;
\prod_{k=1}^{\la_p}\,(\ga_{p,k}-\ga_{\poi}-h)\,\biggr]\,,
\eean
where
\vvn-.5>
\begin{gather*}
\GG^{\ipi}=\,(\Ga_1\<\>\lsym;\Ga_{p-1}\<\>;\Ga_p-\{\ga_{\pci}\};
\Ga_{p+1}\cup\{\ga_{\pci}\};\Ga_{p+2}\<\>\lsym;\Ga_N)\,,
\\[8pt]
\GG^{\ip}=\,(\Ga_1\<\>\lsym;\Ga_{p-1}\<\>;\Ga_p\cup\{\ga_{\poi}\};
\Ga_{p+1}-\{\ga_{\poi}\};\Ga_{p+2}\<\>\lsym;\Ga_N)\,.
\\[-14pt]
\end{gather*}

\begin{thm} [Theorem 5.10 in \cite{GRTV}]
These formulas define a \>$\ty$-module structure on \;$H^*_T(\XX_n)$.
\end{thm}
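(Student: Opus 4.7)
\medskip

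\noindent\textbf{Proof sketch.}

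The plan is to verify the defining relations of $\ty$ directly on the explicit operators of \eqref{Arho}, \eqref{rho E}, \eqref{rho F}. The most efficient route uses not the $T_{\ij}$-presentation but the Gauss/Drinfeld presentation of the Yangian, in which the coefficients of $A_p(u), E_p(u), F_p(u)$ are primitive generators subject to a closed list of relations: mutual commutativity of all $A_p$'s, explicit $[A,E]$ and $[A,F]$ commutators, the central identity
\[
[E_p(u), F_q(v)] \,=\, \frac{\dl_{p,q}}{u-v}\,\bigl(A_{p+1}(u)A_p(u)^{-1} - A_{p+1}(v)A_p(v)^{-1}\bigr),
\]
and the quadratic Serre relations among adjacent $E$'s and among adjacent $F$'s. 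It is enough to check each of these after applying $\rho$.

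I would handle the relations in order of increasing complexity. The $A$--$A$ relations are immediate since each $\rho(A_p(u))$ is multiplication by a scalar-valued series in the $\gamma_{\bul,\bul}$. The mixed commutators $[A_p(u), E_q(v)]$ and $[A_p(u), F_q(v)]$ follow by direct substitution: since $\rho(E_q(v))$ relocates a Chern root from $\Ga_q$ to $\Ga_{q+1}$, comparing $\rho(A_p(u))$ before and after this move reproduces exactly the prescribed commutator. The relation $[E_p(u), F_p(v)]$ is then checked by expanding both compositions using \eqref{rho E} and \eqref{rho F}: the ``diagonal'' terms, in which a root is moved out and a matching root is inserted back, combine by a partial-fraction decomposition in $u$ and $v$ into the expected $A$-expression on the right, while the ``off-diagonal'' terms cancel by antisymmetry of the summand in the swapped index pair. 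The case $p\ne q$ is a routine disjointness argument.

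The principal obstacle will be the Serre relations for the adjacent pairs $(E_p, E_{p+1})$ and dually $(F_p, F_{p+1})$. Substituting \eqref{rho E} twice produces a sum over pairs of Chern roots moved between neighboring strands, and the desired identity reduces to a symmetrized rational-function identity in two spectral parameters, of the same flavor as the identities underlying the construction of Yangian weight functions in \cite{TV1}. This can be proved either by clearing denominators and matching residues at the shared poles in $u$, $v$, and the $\ga_{\bul,\bul}$, or alternatively by reducing to the corresponding identity for weight functions already established in \cite{TV1, TV4}. Throughout, signs, the $h\mapsto -h$ rescaling relative to \cite{GRTV}, and the distinction between the Chern roots $\Ga_p$ of $F_p/F_{p-1}$ and the cumulative roots $\Theta_p$ of $F_p$ must be tracked carefully, but no new conceptual ingredients are required.
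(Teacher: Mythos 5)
Your route is genuinely different from the one behind the paper. The paper itself offers no verification: the statement is imported wholesale from \cite{GRTV}, and the proof there does not check Yangian relations on cohomology directly. Instead, \cite{GRTV} builds the map $\nu$ out of the vectors $\xi_I$ of \Ref{xi} (reproduced in Section \ref{pr st yang} of this paper), shows that $\nu$ intertwines the formulas \Ref{Arho}, \Ref{rho E}, \Ref{rho F} with the standard action $\pho$ of \Ref{pho} on $(\C^N)^{\otimes n}\otimes\C(\zz;h)$ --- a tensor product of evaluation modules where the $\ty$-relations are already known --- and concludes the relations for $\rho$ from injectivity of $\nu$ after localization. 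Indeed, the way Theorem 5.10 is invoked at the end of Section \ref{pr st yang} (``$\nu$ is a homomorphism from $\rho$ to $\pho$'') shows that the intertwining property is part of the theorem itself and is what the present paper actually needs for Theorem \ref{thm stab yang}. Your direct check buys a self-contained argument independent of the weight-function machinery; the paper's route buys brevity and delivers the homomorphism property of $\nu$ simultaneously, which your approach would still have to establish separately.

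If you carry out the direct verification, three points in your sketch need repair or strengthening. First, well-definedness is not automatic: the right-hand sides of \Ref{rho E} and \Ref{rho F} are given on representatives $f$ and have apparent poles at $\ga_{\pcj}=\ga_{\pci}$, so you must check that the output is again a symmetric polynomial and that the operators preserve the defining relations ideal of $H^*_T(\tfl)$ displayed in Section \ref{sec:equiv}. Second, your list of relations is incomplete and partly mislabeled: besides the adjacent Serre relations --- which are \emph{cubic} in the currents, not quadratic --- the Drinfeld-type presentation requires the same-index current relations governing $[E_p(u),E_p(v)]$ and $[F_p(u),F_p(v)]$, commutativity for $|p-q|\geq 2$, and the precise spectral-parameter shifts hidden in the normalizations \Ref{A}, \Ref{EF} (the series $M_{\iib}$ involve arguments $u-p+1$, which affect the exact form of $[E_p(u),F_p(v)]$). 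Third, and most importantly, deducing a $\ty$-module structure from these current relations requires the nontrivial completeness theorem that the coefficients of $A_p$, $E_p$, $F_p$ with this closed list of relations \emph{present} $\Yn$ (Drinfeld's new realization via Gauss decomposition, proved in detail for $\gln$ by Brundan and Kleshchev), together with a check that the $\C[h]$-integral form $\ty$ is respected; this must be cited, not assumed. With those additions your plan is viable, but it is substantially longer than the intertwining argument of \cite{GRTV}.
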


The topological interpretation of this action see in \cite[Theorem 5.16]{GRTV}.

The $\ty$-module structure $\rho$ on \;$H^*_T(\XX_n)$ is a Yangian version of representations
of the quantum affine algebra $U_q(\widehat{\gln})$ considered in \cite{Vas1, Vas2}.

\subsection{Stable envelopes and Yangian actions}
As we know, formula \Ref{pho} defines the $\ty$-module structure $\pho$ on
$H^*_T((\XX_n)^A)=(\C^N)^{\otimes n}\otimes \C(\zz;h)$, and formulas \Ref{Arho}, \Ref{rho E}, \Ref{rho F} define
the $\ty$-module structure $\rho$ on $H^*_T(\XX_n)$.

\begin{thm}
\label{thm stab yang}
For the identity element $\on{id}\in S_n$, the map $\St_{\on{id}} : H^*_T((\XX_n)^A) \to H^*_T(\XX_n)$ is a homomorphism
of $\ty$-modules.
\end{thm}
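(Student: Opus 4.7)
\smallskip
The plan is to leverage the inverse map $\nu$ from \cite{GRTV}, together with the relation $\nu\circ\St_{\on{id}}=\on{Id}$ that will be established in Section \ref{pr st yang}. The Yangian-intertwining property for $\nu$ (or rather for the essentially equivalent weight function map) is, in GRTV's setup, the entire reason the $\ty$-action $\rho$ on $H^*_T(\XX_n)$ was defined by formulas \Ref{Arho}, \Ref{rho E}, \Ref{rho F} in the first place, so the idea is to transfer that property to $\St_{\on{id}}$.

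First, I would observe that after tensoring with $\C(\zz;h)$ the stable envelope map becomes an isomorphism of $\C(\zz;h)$-modules: this is already contained in Section \ref{sec trian}, where the matrix $\mc A_{\on{id}}$ is shown to be triangular with nonzero diagonal entries given by $\on{sgn}_{\on{id},I}\cdot e_{\on{id},I,-}$. Combined with $\nu\circ\St_{\on{id}}=\on{Id}$, this means $\nu$ is a two-sided inverse of $\St_{\on{id}}$ after localization.

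Second, I would verify (or cite from \cite{GRTV}) that $\nu$ is a homomorphism of $\ty$-modules intertwining $\rho$ on $H^*_T(\XX_n)$ with $\pho$ on $(\C^N)^{\otimes n}\otimes\C[\zz;h]$; that is, $\pho(T)\circ\nu=\nu\circ\rho(T)$ for every $T\in\ty$. Granting this, for any $T\in\ty$ one computes, after inverting $\St_{\on{id}}$ over $\C(\zz;h)$,
\begin{equation*}
\St_{\on{id}}\circ\pho(T)\;=\;\St_{\on{id}}\circ\nu\circ\rho(T)\circ\St_{\on{id}}\;=\;\rho(T)\circ\St_{\on{id}}.
\end{equation*}
Since $\St_{\on{id}}$, $\pho(T)$, and $\rho(T)$ all have entries that are polynomial in $\zz$ and $h$ (the first by definition, the latter two by inspection of \Ref{pho}, \Ref{Arho}, \Ref{rho E}, \Ref{rho F}), the equality over $\C(\zz;h)$ descends to an equality over $\C[\zz;h]$, yielding the theorem.

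The main obstacle is the intertwining property of $\nu$. This is a translation of a theorem of \cite{GRTV} (bearing in mind the replacement $h\mapsto -h$ flagged in Section \ref{sec tilde Y}) together with the precise identification, via Theorem \ref{main thm}, of $[W_{\on{id}}]$ with $c_n\circ\St_{\on{id}}$; one must check that the normalization $c_n(\bs\Theta)$ is built into $\nu$ in exactly the way that makes $\nu$ intertwine $\rho$ and $\pho$ on the nose. If needed, the intertwining can be verified directly on the Chevalley-type generators $T^{(1)}_{\pp+1}$, $T^{(1)}_{\pop}$ and on the series coefficients of $T_{1,1}(u)$, which is enough since these generate $\ty$, using the explicit formulas \Ref{Arho}--\Ref{rho F} and \Ref{Lpm}--\Ref{pho}.
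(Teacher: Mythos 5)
Your proposal is correct and follows essentially the same route as the paper: the paper proves $\nu\circ\St_{\on{id}}=\on{Id}$ (Lemma \ref{lem nu stab = 1}, via Theorem \ref{main thm} and the orthogonality Lemma \ref{lem:orto}), cites \cite[Theorem 5.10]{GRTV} for the intertwining of $\nu$ between $\rho$ and $\pho$, and concludes by inverting, exactly as you do. The normalization issue you flag as the main obstacle is resolved in the paper by Lemma \ref{lem 2.14}, which characterizes the vectors $\xi_I$ uniquely by $\xi_\IMI=v_\IMI$ and $\xi_{s_i(I)}=\st_i\,\xi_I$, thereby identifying them with the $\xi_I^+$ of \cite{GRTV} (with $h$ replaced by $-h$) so that the cited intertwining theorem applies on the nose.
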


\begin{cor}
\label{Y MO}
The $\ty$-module structure $\rho$ on $H^*_T(\XX_n)$ coincides with the Yangian module structure on
$H^*_T(\XX_n)$ introduced in \cite{MO}.

\end{cor}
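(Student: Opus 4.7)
The plan is to deduce the corollary from Theorem \ref{thm stab yang} together with the compatibility, already recorded at the end of Section \ref{sec yang act C^N}, between $\pho$ and the Maulik-Okounkov Yangian action on the fixed-point cohomology $H^*_T((\XX_n)^A)$.

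Write $\pho^{\mathrm{MO}}$ and $\rho^{\mathrm{MO}}$ for the $\ty$-actions on $H^*_T((\XX_n)^A)$ and $H^*_T(\XX_n)$ coming from \cite{MO}. Two ingredients are already in hand. First, by \cite[Section 5.2.6]{MO} and Section \ref{sec yang act C^N}, one has $\pho^{\mathrm{MO}}=\pho$ as actions on $H^*_T((\XX_n)^A)$. Second, the defining feature of $\rho^{\mathrm{MO}}$ in \cite{MO} is that, after tensoring with $\C(\zz;h)$, each stable envelope map $\St_\si$ intertwines $\pho^{\mathrm{MO}}$ with $\rho^{\mathrm{MO}}$; in particular $\St_{\on{id}}$ does. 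Theorem \ref{thm stab yang} states the analogous intertwining property for the GRTV actions: $\St_{\on{id}}$ sends $\pho$ to $\rho$.

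After tensoring with $\C(\zz;h)$, the map $\St_{\on{id}}$ is an isomorphism of $\C(\zz;h)$-modules: in the bases $(1_I)$ and $(w_I)$ its matrix is upper-triangular with respect to $\leq_{\on{id}}$ (cf.\ Section \ref{sec trian}) with diagonal entries $\on{sgn}_{\on{id},I}\cdot e_{\on{id},I,-}$, which are nonzero in $\C(\zz;h)$. Conjugating the common source action $\pho=\pho^{\mathrm{MO}}$ by this isomorphism therefore produces, on $H^*_T(\XX_n)\otimes_{\C[\zz;h]}\C(\zz;h)$, both $\rho$ (by Theorem \ref{thm stab yang}) and $\rho^{\mathrm{MO}}$ (by the above property of $\St_{\on{id}}$). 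Hence $\rho = \rho^{\mathrm{MO}}$ after this localization.

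Finally, $H^*_T(\XX_n)$ is a free $\C[\zz;h]$-module, so it embeds into its localization $H^*_T(\XX_n)\otimes_{\C[\zz;h]}\C(\zz;h)$. Both $\rho$ and $\rho^{\mathrm{MO}}$ are $\C[\zz;h]$-linear actions defined on the non-localized module $H^*_T(\XX_n)$ itself ($\rho$ by formulas \Ref{Arho}--\Ref{rho F}, $\rho^{\mathrm{MO}}$ by \cite{MO}); since they agree after extending scalars to $\C(\zz;h)$, they agree on $H^*_T(\XX_n)$. The only potential obstacle is a bookkeeping check that the MO generating series really does define an action on the integral equivariant cohomology and not only on its localization; this is an input from \cite{MO} rather than something to reprove here, so once one accepts that input the argument reduces to the purely formal conjugation described above.
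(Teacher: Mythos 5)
Your proposal is correct and follows essentially the same route as the paper: identify the Maulik--Okounkov action on fixed-point cohomology with $\pho$, note that the action on $H^*_T(\XX_n)$ in \cite{MO} is by construction induced through $\St_{\on{id}}$, and conclude via Theorem \ref{thm stab yang}. The only difference is that you spell out the formal steps the paper leaves implicit (invertibility of $\St_{\on{id}}$ after localizing to $\C(\zz;h)$, and descent from the localization back to $H^*_T(\XX_n)$ using freeness over $\C[\zz;h]$), which is a harmless elaboration rather than a different argument.
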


\noindent
{\it Proof of Corollary \ref{Y MO}.}\ As we know, the Yangian module structure on $H^*_T((\XX_n)^A)$, introduced in \cite{MO},
coincides with the $\ty$-module structure $\pho$.
In \cite{MO}, the Yangian module structure on $H^*_T(\XX_n)$ is induced from the Yangian module structure on
$H^*_T((\XX_n)^A)$ by the map $\St_{\on{id}} : H^*_T((\XX_n)^A) \to H^*_T(\XX_n)$, see \cite[Sections 4 and 5]{MO}.
Now Theorem \ref{thm stab yang} implies the corollary.
\qed
\smallskip

\subsection{Proof of Theorem \ref{thm stab yang}}
\label{pr st yang}
Define operators \,$\st_1\lc\st_{n-1}$ acting on \<$(\C^N)^{\otimes n}\<$-valued
functions of $\zz,h$ \,by \vvn.2>
\beq
\label{stilde}
\st_i\>f(\zzz,h)=
\frac{(z_i-z_{i+1})\,P^{(\ii+1)}+ h}{z_i-z_{i+1}+ h}\;f(\zzii,h)\,.
\notag
\eeq
\begin{lem} [Lemma 2.3 in \cite{GRTV}]
\label{st+}
The assignment \,$s_i\mapsto\st_i$, \,$i=1\lc n-1$,
\,defines an action of $S_n$.
\qed
\end{lem}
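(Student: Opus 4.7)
The symmetric group $S_n$ is presented by generators $s_1,\dots,s_{n-1}$ subject to the relations
$s_i^2=1$, $s_is_j=s_js_i$ for $|i-j|\ge 2$, and $s_is_{i+1}s_i=s_{i+1}s_is_{i+1}$. Thus it suffices to verify these three relations for the operators $\st_i$. The strategy is to factor
\[
\st_i \,=\, \check R^{(i,i+1)}(z_i-z_{i+1})\,\sd_i,
\qquad
\check R(u)\,=\,\frac{uP+h}{u+h},
\]
where $\sd_i$ denotes the transposition of $z_i$ and $z_{i+1}$ in the argument of $f$, and then to use the elementary commutation $\sd_i\,g(z_i-z_{i+1})=g(z_{i+1}-z_i)\,\sd_i$ to move all $\sd_i$'s to the right. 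Each relation then reduces to an identity among products of the $\check R$'s.

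For $\st_i^2=1$, write $\st_i^2 = \check R^{(i,i+1)}(z_i-z_{i+1})\sd_i\check R^{(i,i+1)}(z_i-z_{i+1})\sd_i = \check R^{(i,i+1)}(u)\check R^{(i,i+1)}(-u)$ with $u=z_i-z_{i+1}$, since $\sd_i^2=1$. The product is $\frac{(uP+h)(-uP+h)}{(u+h)(-u+h)}=\frac{h^2-u^2P^2}{h^2-u^2}$, which equals $1$ because $P^2=\on{Id}$. For $|i-j|\ge 2$, the operators $\check R^{(i,i+1)}$ and $\check R^{(j,j+1)}$ act in disjoint tensor factors, and $\sd_i,\sd_j$ commute and each fixes the argument of the scalar factor of the other; commutation is then automatic.

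The braid relation is the main step. Collecting all $\sd_i$'s on the right and using that $\sd_i\sd_{i+1}\sd_i=\sd_{i+1}\sd_i\sd_{i+1}$ permutes $(z_i,z_{i+1},z_{i+2})$ identically on both sides, the relation $\st_i\st_{i+1}\st_i=\st_{i+1}\st_i\st_{i+1}$ reduces to
\[
\check R^{(i,i+1)}(z_i\<-z_{i+1})\,\check R^{(i+1,i+2)}(z_i\<-z_{i+2})\,\check R^{(i,i+1)}(z_{i+1}\<-z_{i+2})
\]
being equal to
\[
\check R^{(i+1,i+2)}(z_{i+1}\<-z_{i+2})\,\check R^{(i,i+1)}(z_i\<-z_{i+2})\,\check R^{(i+1,i+2)}(z_i\<-z_{i+1}).
\]
This is the braid form of the Yang--Baxter equation for the rational $R$-matrix $\check R(u)$, which follows by a standard direct check on the three-dimensional space of invariants of $P^{(i,i+1)}P^{(i+1,i+2)}$ acting on the triple tensor product (or equivalently by the well-known YBE for $R(u)=(u\on{Id}-hP)/(u-h)$ together with the identity $\check R(u)=PR(-u)$).

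The main obstacle is the bookkeeping in the third step: one must carefully track how the $\sd_i$'s shift the arguments of the $\check R$'s when they are moved to the right, so that both sides produce exactly the triple of arguments appearing in YBE. Once this is done, the computation of $\st_i^2=1$ and the disjoint commutation are immediate, and the braid relation follows from YBE for $\check R$.
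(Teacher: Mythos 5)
Your argument is correct and complete. For comparison: the paper itself contains no proof of this lemma --- it is imported verbatim as Lemma 2.3 of \cite{GRTV}, with the citation standing in for the proof --- so the benchmark is the standard verification, which is precisely what you carry out. The factorization $\st_i=\check R^{(i,i+1)}(z_i-z_{i+1})\,\sd_i$ with $\check R(u)=(uP+h)/(u+h)$, together with the commutation rule $\sd_i\,g=(g\circ s_i)\,\sd_i$ for multiplication operators, is the right bookkeeping device: $\st_i^2=\check R^{(i,i+1)}(u)\,\check R^{(i,i+1)}(-u)=\on{Id}$ since $(uP+h)(-uP+h)=h^2-u^2P^2=h^2-u^2$; the commutation for $|i-j|\geq 2$ is immediate; and pushing the $\sd$'s to the right in $\st_i\st_{i+1}\st_i$ and $\st_{i+1}\st_i\st_{i+1}$ produces the common right factor $\sd_i\sd_{i+1}\sd_i=\sd_{i+1}\sd_i\sd_{i+1}$ and exactly the two triple products you display, with arguments $z_i-z_{i+1}$, $z_i-z_{i+2}$, $z_{i+1}-z_{i+2}$ --- I verified this reduction and it comes out as you state. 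The remaining identity is indeed the braid form of the Yang--Baxter equation for $\check R$, which holds: expanding both numerators, all terms match using $P^{(i,i+1)}P^{(i+1,i+2)}P^{(i,i+1)}=P^{(i+1,i+2)}P^{(i,i+1)}P^{(i+1,i+2)}$, or one can use your (correct) identity $\check R(u)=PR(-u)$ with $R$ as in \Ref{RR}. One small imprecision: your justification via ``the three-dimensional space of invariants of $P^{(i,i+1)}P^{(i+1,i+2)}$'' is garbled --- the clean statement is that all factors lie in the image of the group algebra $\C[S_3]$ acting on the triple tensor product, where the identity is a finite check --- but this does not affect the proof, since your parenthetical alternative suffices.
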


For $I\in\Il$, introduce $\xi_I\in \Czh$ by the formula
\bean
\label{xi}
\xi_I = \sum_{J\in \Il} \frac{W_{\si_0,J}(\zz_I;\zz;h)}{Q(\zz_I)\,c_\bla(\zz_I)} v_J,
\eean
where $\si_0\in S_n$ is the longest permutation and $v_I$ is defined in \Ref{v_I}. Notice that
$\frac{W_{\si_0,J}(\zz_I;\zz;h)}{c_\bla(\zz_I)}$ is a polynomial for every $J$, by Lemma \ref{lem W I sigma}.
Let
\be
D\,=\!\!\prod_{1\leq i<j\leq n}(z_j-z_i-h)\,.
\ee
Define \,$\IMI\in\Il$ \,by $
\IMI\,=\,\bigl(\{1\lc\la_1\}\>,\{\la_1+1\lc\la_1+\la_2\}\>,\;\ldots\;,
\{n-\la_N+1\lc n\}\bigr)\,.
$

\begin{lem} [C.f. Proposition 2.14 in \cite{GRTV}]
\label{lem 2.14}
The elements $\xi_I,I\in \Il$, are unique elements of $(\C^N)^{\otimes n}\otimes\C[\zz;h;D^{-1}]$ such that
$\xi_\IMI=v_\IMI$ \,and $\xi_{s_i(I)}\>=\,\st_i\>\xi_I$
for every \,$I\in\Il$ \,and \,$i=1\lc n-1$\,.
\end{lem}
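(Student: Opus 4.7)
The argument splits into uniqueness, existence, and polynomiality.

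\emph{Uniqueness.} The operators $\st_1,\dots,\st_{n-1}$ satisfy the braid relations by Lemma \ref{st+}, so the assignment $s_i\mapsto\st_i$ defines an $S_n$-action on $(\C^N)^{\otimes n}$-valued functions of $\zz,h$. The group $S_n$ acts transitively on $\Il$ by $\tau\cdot(I_1,\dots,I_N)=(\tau(I_1),\dots,\tau(I_N))$, with the stabilizer of $\IMI$ equal to the Young subgroup $S_{\la_1}\times\cdots\times S_{\la_N}$. Thus every $I\in\Il$ has the form $\tau(\IMI)$, and the recursion together with the initial condition force $\xi_I=\st_\tau v_\IMI$; well-definedness reduces to checking $\st_wv_\IMI=v_\IMI$ for $w$ in the stabilizer, which holds because $P^{(i,i+1)}v_\IMI=v_\IMI$ whenever $i,i+1$ lie in the same block of $\IMI$, giving $\st_iv_\IMI=v_\IMI$ for such $i$, and these $s_i$ generate the stabilizer.

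\emph{Existence.} I verify that the $\xi_I$ defined by \Ref{xi} satisfy the required properties. For the initial condition $\xi_\IMI=v_\IMI$: the element $\IMI$ is $\le_{\sigma_0}$-maximal in $\Il$, since $\sigma_0^{-1}\bigl(\cup_{\ell\le k}\IMI_\ell\bigr)=\{n+1-\la^{(k)},\dots,n\}$ is entrywise the largest possible subset of size $\la^{(k)}$. By Lemma \ref{lem W I sigma}(iv) this kills $W_{\sigma_0,J}(\zz_\IMI;\zz;h)$ for every $J\ne\IMI$. For $J=\IMI$, Lemma \ref{lem W I sigma}(ii), together with the observation that $\sigma_0(i)\in\IMI_a$ and $\sigma_0(j)\in\IMI_b$ with $a<b$ forces $i>j$, shows that the surviving double product collapses exactly to $Q(\zz_\IMI)$, so the coefficient of $v_\IMI$ equals $1$.

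For the recursion $\xi_{s_iI}=\st_i\xi_I$, I expand $\st_i$, use $P^{(i,i+1)}v_J=v_{s_iJ}$, and relabel $J$ in one of the resulting sums. The recursion then reduces to the scalar identity
\begin{equation*}
\frac{W_{\sigma_0,J}(\zz_{s_iI};\zz;h)}{Q(\zz_{s_iI})\,c_\bla(\zz_{s_iI})}\,=\,\frac{(z_i-z_{i+1})\,W_{\sigma_0,s_iJ}(\zz_I;\zzii;h)+h\,W_{\sigma_0,J}(\zz_I;\zzii;h)}{(z_i-z_{i+1}+h)\,Q(\zz_I;\zzii)\,c_\bla(\zz_I;\zzii)}
\end{equation*}
for every $J\in\Il$. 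This is the main computational step. It follows from the $R$-matrix transition formula for weight functions (Lemma \ref{lem W si W}), read through the definition $W_{\sigma_0,J}(\TT;\zz;h)=W_{\sigma_0^{-1}(J)}(\TT;z_n,\dots,z_1;h)$, together with the manifest invariance of $Q(\zz_I)$ and $c_\bla(\zz_I)$ under permutations within the blocks of $I$.

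\emph{Polynomiality and main obstacle.} Choosing a reduced expression for the $\tau\in S_n$ of minimal length with $\tau(\IMI)=I$, induction on its length shows $\xi_I=\st_\tau v_\IMI$ accumulates only denominator factors of the form $(z_a-z_b+h)=-(z_b-z_a-h)$ with $a<b$, each dividing $D$; hence $\xi_I\in(\C^N)^{\otimes n}\otimes\C[\zz;h;D^{-1}]$. The hardest step is the scalar identity above: carrying it out directly requires careful combinatorial bookkeeping to translate the $R$-matrix shift of Lemma \ref{lem W si W} into the $\st_i$-action, through the index reversal induced by $\sigma_0$.
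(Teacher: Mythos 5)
Your argument is correct and is essentially the paper's own proof: like the paper, you get the normalization $\xi_\IMI=v_\IMI$ from Lemma \ref{lem W I sigma} (via the $\leq_{\si_0}$-maximality of $\IMI$, which kills all $J\ne\IMI$ and collapses the surviving product to $Q(\zz_\IMI)\,c_\bla(\zz_\IMI)$) and the recursion $\xi_{s_i(I)}=\st_i\,\xi_I$ from Lemma \ref{lem W si W}, while your uniqueness paragraph (transitivity of the $S_n$-action plus the stabilizer check) and your $D^{-1}$-polynomiality paragraph (reduced-word induction) supply details the paper leaves implicit or delegates to \cite[Proposition 2.14]{GRTV}. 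One caution on your displayed scalar identity: the substitutions you write as $(\zz_I;\zzii)$ must be understood as performed after the swap of $z_i$ and $z_{i+1}$, so that in the original variables they equal $\zz_{s_i(I)}$ (matching the left-hand side); read literally with the unswapped $\zz_I$ the identity fails, whereas with the composite reading it does follow from Lemma \ref{lem W si W} via the conjugation $\si_0 s_{n-i}=s_i\si_0$ and inverting the resulting two-by-two exchange relation, exactly along the route you indicate.
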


\begin{proof}
The fact that $\xi_\IMI=v_\IMI$ follows from Lemma \ref{lem W I sigma}.
The property $\xi_{s_i(I)}\>=\,\st_i\>\xi_I$ follows from Lemma \ref{lem W si W}.
\end{proof}

By comparing Lemma \ref{lem 2.14} and \cite[Proposition 2.14]{GRTV} we conclude
that the elements $\xi_I,I\in\Il$, coincide with the elements $\xi_I^+, I\in\Il$, of
Proposition 2.14 in which $h$ is replaced with $-h$.

Consider the map
\bea
\nu=\oplus_{|\bla|=n}\nu_\bla \
: \ \oplus_{|\bla|=n} H_T^*(\XX_\bla)\otimes\C(\zz;h)\ \to \
\Czh,
\eea
where $\nu_\bla$ is defined by the formula
\bea
[f(\bs\Theta;\zz;h)] \mapsto \sum_{I\in\Il} \frac{f(\zz_I;\zz;h)}{R(\zz_I)}\xi_I\,,
\eea
see \cite[Formula (5.9)]{GRTV}.

\begin{lem}
\label{lem nu stab = 1}
We have $\nu\circ\St_{\on{id}} = \on{Id}$.

\end{lem}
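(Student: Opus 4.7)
The plan is to compute $\nu(\St_{\on{id}}(v_I))$ explicitly on each basis vector $v_I$ and reduce the claim to the orthogonality identity of Lemma~\ref{lem:orto}.

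First, I would use Theorem~\ref{main thm} to read off the fixed-point restrictions of the stable envelope. From $[W_{\on{id}}] = c_n\circ \St_{\on{id}}$, localizing at $x_J$ and using that $c_\bla(\bs\Theta)$ restricts to $c_\bla(\zz_J)$, one obtains
\begin{equation*}
\St_{\on{id}}(v_I)\big|_{x_J} \;=\; \frac{W_{\on{id},I}(\zz_J;\zz;h)}{c_\bla(\zz_J)}.
\end{equation*}
By part (i) of Lemma~\ref{lem W_I} the numerator is divisible by $c_\bla(\zz_J)$, so this is a genuine polynomial and the formula is well defined on all of $H^*_T(\XX_n)$ (not merely after localization).

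Next, I would substitute this expression into the defining formula for $\nu$ and expand $\xi_J$ using \Ref{xi}:
\begin{equation*}
\nu\bigl(\St_{\on{id}}(v_I)\bigr) \;=\; \sum_{J\in\Il} \frac{\St_{\on{id}}(v_I)|_{x_J}}{R(\zz_J)}\,\xi_J \;=\; \sum_{K\in\Il}\Biggl(\sum_{J\in\Il} \frac{W_{\on{id},I}(\zz_J;\zz;h)\,W_{\si_0,K}(\zz_J;\zz;h)}{R(\zz_J)\,Q(\zz_J)\,c_\bla(\zz_J)^2}\Biggr) v_K.
\end{equation*}
Finally, Lemma~\ref{lem:orto} identifies the inner sum with $\delta_{I,K}$, yielding $\nu(\St_{\on{id}}(v_I)) = v_I$ as required.

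I do not anticipate any real obstacle: once Theorem~\ref{main thm} and Lemma~\ref{lem:orto} are available, the argument is essentially a bookkeeping computation. The only mild subtlety to check is that the restriction $\St_{\on{id}}(v_I)|_{x_J}$ really is polynomial rather than rational, which is exactly what the divisibility statement in Lemma~\ref{lem W_I}(i) provides.
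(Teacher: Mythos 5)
Your proposal is correct and follows essentially the same route as the paper: the paper's proof also writes $\St_{\on{id}}(1_I)$ as $[W_{\on{id},I}(\bs\Theta;\zz;h)]/c_\bla(\bs\Theta)$ via Theorem~\ref{main thm}, applies the localization formula defining $\nu$, expands $\xi_J$ by \Ref{xi}, and concludes with Lemma~\ref{lem:orto}. Your extra remark on polynomiality via Lemma~\ref{lem W_I}(i) is a harmless (and accurate) elaboration of what the paper leaves implicit.
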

\begin{proof}We have
\bea
&&
\nu\circ\St_{\on{id}}(1_I) = \nu\Big(\frac{[W_{\on{id},I}(\bs\Theta;\zz;h)]}{c_\bla(\bs\Theta)}\Big)=
\sum_{J\in\Il} \frac{W_{\on{id},I}(\zz_J;\zz;h)}{R(\zz_J)c_\bla(\zz_J)}\,\xi_J=
\\
&&
\phantom{aaa}
=
\sum_{J,K\in\Il} \frac{W_{\on{id},I}(\zz_J;\zz;h)\,W_{\si_0,K}(\zz_J;\zz;h)}{R(\zz_J)Q(\zz_J)c_\bla(\zz_J)^2}\, v_K = v_I = 1_I\,,
\eea
where the next to the last equality follows from Lemma \ref{lem:orto}.
\end{proof}

Theorem 5.10 in \cite{GRTV} says that $\nu$ is a homomorphism of the $\ty$-module structure $\rho$ on
$H^*_T(\XX_n)\otimes\C(\zz;h)$ to the $\ty$-module structure $\pho$ on
$\Czh$. This proves Theorem \ref{thm stab yang}.

\section{Dynamical Hamiltonians and quantum multiplication}
\label{Bethe subalgebras sec}

\subsection{Dynamical Hamiltonians}
\label{sec dyn hams}
Assume that \,$\kkk$ \>are distinct numbers. Define the elements
\,$\Xk_1\lc\Xk_N\in\ty$ by the rule
\vvn.3>
\bea
\label{X}
\Xk_i\, {}=\,-h\>T_{\ii}\+2+\>\frac h2\,T_{\ii}\+1\>\bigl(\<\>T_{\ii}\+1-1\bigr)-\>
h\>\sum_{\satop{j=1}{j\ne i}}^N\,\frac{\kk_j}{\kk_i-\kk_j}\,G_{\ij}\,,
\notag
\eea
where  $G_{\ij}=\,T_{\ij}\+1\>T_{\ji}\+1\<-T_{\jj}\+1=\,
T_{\ji}\+1\>T_{\ij}\+1\<-T_{\ii}\+1\>.$
Notice that
\vvn.1>
\be
T_{\ii}\+1=\>e_{\ii}\,,\qquad
G_{\ij}\>=\,e_{\ij}\>e_{\ji}\<-e_{\ii}\>=\,e_{\ji}\>e_{\ij}\<-e_{\jj}\>.
\ee
By taking the limit $\kk_{i+1}/\kk_i\to0$ \,for all \,$i=1\lc N-1$,
we define the elements \,$\Xin_1\lc\Xin_N\in\ty$,
\vvn-.1>
\bea
\label{X8}
\Xin_i {}=\,-h\>T_{\ii}\+2+\>\frac h2\,e_{\ii}\>\bigl(\<\>e_{\ii}-1\bigr)
+h\>(G_{i,1}\<\lsym+\<\>G_{\ii-1})\,,
\\[-14pt]
\notag
\eea
see \cite{GRTV}. We call the elements
\vvn.1>
\,$\Xk_i\<,\> \Xin_i$, \,$i=1\lc N$, the {\it dynamical Hamiltonians\/}.
Observe that
\vvn-.3>
\bea
\label{XX}
\Xk_i\,=\,\Xin_i-\>h\>\sum_{j=1}^{i-1}\,\frac{\kk_i}{\kk_i-\kk_j}\,G_{\ij}
\>-\>h\!\sum_{j=i+1}^n\frac{\kk_j}{\kk_i-\kk_j}\,G_{\ij}\,.
\eea
Given \,$\bla=(\la_1\lc\la_N)$, \,set
\vvn.1>
\;$G_{\bla\<\>,\<\>\ij}\>=\,e_{\ji}\>e_{\ij}$ \,for \,$\la_i\ge\la_j$\>
\,and
\;$G_{\bla\<\>,\<\>\ij}=\>e_{\ij}\>e_{\ji}$ \,for \,$\la_i<\la_j$\>.
Define the elements
\,$X^q_{\bla\<\>,1}\,\lc X^q_{\bla\<\>,\<\>N}\<\in\ty$,
\bea
\label{Xkm}
X^q_{\bla\<\>,\<\>i}\>=\,\Xin_i-
\>h\>\sum_{j=1}^{i-1}\,\frac{\kk_i}{\kk_i-\kk_j}\,G_{\bla\<\>,\<\>\ij}\>-
\>h\!\sum_{j=i+1}^n\frac{\kk_j}{\kk_i-\kk_j}\,G_{\bla\<\>,\<\>\ij}\,.
\eea

Let $\kappa\in\C^\times$. The formal differential operators
\vvn-.4>
\beq
\label{dyneq}
\nabla_{\qq,\kp,i}\>=\,\kp\,\ddk_i\>-\>\Xk_i,\qquad i=1\lc N,
\eeq
pairwise commute and, hence, define a flat connection for any \,$\ty$-module, see \cite{GRTV}.

\begin{lem}[Lemma 3.5 in \cite{GRTV}]
\label{flat+-}
The connection \;\;$\nabla_{\<\bla,\qq,\kp}\<$ defined by
\vvn.2>
\bea
\label{nablapm}
\nabla_{\<\bla\<\>,\qq,\kp,\<\>i}=\,\kp\,\ddk_i\>-\>X^q_{\bla\<\>,\<\>i}\,,
\vv-.1>
\eea
$i=1\lc N$, is flat for any \,$\kp$.
\end{lem}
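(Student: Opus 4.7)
The plan is to verify $[\nabla_{\bla,\qq,\kp,i},\nabla_{\bla,\qq,\kp,j}]=0$ by direct expansion. Since the Euler operators $\ddk_i$ mutually commute, the commutator separates into a term linear in $\kp$, namely $\kp\bigl(\ddk_j(X^q_{\bla,i})-\ddk_i(X^q_{\bla,j})\bigr)$, and a $\kp$-independent term $[X^q_{\bla,i},X^q_{\bla,j}]$. For the connection to be flat at every $\kp$ both pieces must vanish independently, so the plan splits into (a) commutativity of the Hamiltonians, and (b) symmetry of their cross derivatives in $\kk$.

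For step (a), I would first reduce to the commutativity of the universal $\Xk_i$. From the two factorizations
\[
G_{\ij} \,=\, e_{\ji}e_{\ij}-e_{\jj} \,=\, e_{\ij}e_{\ji}-e_{\ii}
\]
one sees that on the $\gln$-weight subspace of weight $\bla$ the element $G_{\bla,\ij}$ differs from $G_{\ij}$ by the scalar $\min(\la_i,\la_j)$. Hence $X^q_{\bla,i}-\Xk_i$ acts as a central $\kk$-dependent scalar on that weight subspace, which yields $[X^q_{\bla,i},X^q_{\bla,j}]=[\Xk_i,\Xk_j]$ there. The identity $[\Xk_i,\Xk_j]=0$ is then a pure Yangian statement to be derived from \eqref{ijkl}: the quadratic contributions from $-h\<\>T_{\ii}\+2$ and the bilinear terms $G_{\ij}$ come with $\kk$-dependent coefficients whose residues at $\kk_i=\kk_j$ are precisely arranged to absorb the cross-commutators produced by the RTT relations.

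For step (b), I would observe that $\Xin_i$ is independent of $\kk$, so only the rational coefficients contribute. The check reduces to the elementary symmetry
\[
\ddk_j\!\left(\frac{\kk_i}{\kk_i-\kk_j}\right) \,=\, -\,\frac{\kk_i\kk_j}{(\kk_i-\kk_j)^2} \,=\, \ddk_i\!\left(\frac{\kk_j}{\kk_j-\kk_i}\right),
\]
combined with the identity $G_{\bla,\ij}=G_{\bla,\ji}$ in $U(\gln)$, which follows by a case analysis on the sign of $\la_i-\la_j$. These two facts make the two cross-derivatives agree term by term.

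The main obstacle is clearly step (a): the Yangian commutativity $[\Xk_i,\Xk_j]=0$. All other parts are essentially bookkeeping, but this identity draws on the full strength of the RTT relations \eqref{ijkl} and is the substantive content of \cite[Lemma~3.5]{GRTV}. A more conceptual route would be to recognize $\{\Xk_i\}_{i=1}^N$ as a specialization of the trigonometric dynamical Bethe subalgebra of $\ty$ and invoke its commutativity from the literature, after which the scalar reduction $X^q_{\bla,i} \leftrightarrow \Xk_i$ and the cross-derivative calculation together conclude the proof of flatness.
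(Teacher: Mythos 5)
Your proposal is correct in substance, but it takes a genuinely different route from the paper, whose entire proof is a one\/-line gauge transformation: since $G_{\bla\<\>,\<\>\ij}-G_{\ij}=\eps_{\bla\<\>,\ij}$ is a Cartan element ($e_{\jj}$ if $\la_i\ge\la_j$, and $e_{\ii}$ otherwise), the connection \Ref{nablapm} is conjugate to the trigonometric dynamical connection \Ref{dyneq} by $\Upsilon_\bla=\prod_{i<j}(1-\kk_j/\kk_i)^{h\>\eps_{\bla\<\>,\ij}/\kp}$, see \Ref{gauge}, and conjugation preserves flatness; the flatness of \Ref{dyneq} itself is quoted there, not reproved. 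Your direct expansion is essentially the unpacked form of this conjugation: the logarithmic derivative $\kp\,\ddk_i\log\Upsilon_\bla$ produces exactly the Cartan corrections that you describe as scalar shifts by $\min(\la_i,\la_j)$ on the weight\/-$\bla$ subspace (your reduction of $[X^q_{\bla,i},X^q_{\bla,j}]$ to $[\Xk_i,\Xk_j]$), while the $\kp$\/-linear piece of the commutator is your cross\/-derivative check. Crucially, both arguments rest on the same imported input: the commutativity of the operators \Ref{dyneq} for all $\kp$, which yields $[\Xk_i,\Xk_j]=0$ and the symmetry of the $\kk$\/-derivatives separately. So your deferral of step (a) to the literature is legitimate and puts you on exactly the footing of the paper; your sketch of extracting it from the RTT relations \Ref{ijkl} need not be completed. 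The paper's route buys brevity and an explicit gauge $\Upsilon_\bla$ (useful for transporting flat sections); yours buys an elementary verification that makes visible precisely where the weight restriction enters.

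Two slips should be repaired. First, a sign: $\ddk_j\bigl(\frac{\kk_i}{\kk_i-\kk_j}\bigr)=+\frac{\kk_i\kk_j}{(\kk_i-\kk_j)^2}$, not $-\frac{\kk_i\kk_j}{(\kk_i-\kk_j)^2}$; since also $\ddk_i\bigl(\frac{\kk_j}{\kk_j-\kk_i}\bigr)=+\frac{\kk_i\kk_j}{(\kk_i-\kk_j)^2}$, the equality you assert between the two outer expressions is still true (and the actual coefficient of $G_{\bla\<\>,\<\>\jib}$ appearing in $X^q_{\bla,j}$ for $i>j$ by \Ref{Xkm}, namely $\frac{\kk_i}{\kk_j-\kk_i}$, has the same $\ddk_i$). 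Second, and more substantively, the identity $G_{\bla\<\>,\<\>\ij}=G_{\bla\<\>,\<\>\ji}$ is \emph{false} in $U(\gln)$ when $\la_i=\la_j$: the definition then gives $G_{\bla\<\>,\<\>\ij}=e_{\ji}\>e_{\ij}$ but $G_{\bla\<\>,\<\>\ji}=e_{\ij}\>e_{\ji}$, and these differ by $e_{\ii}-e_{\jj}$. That difference acts as $\la_i-\la_j=0$ on the weight\/-$\bla$ subspace, so your conclusion survives, but the identity must be asserted for operators on that subspace (consistently with your step (a)), not in the enveloping algebra. The same caveat is hidden in the paper's formula \Ref{gauge}, where $\eps_{\bla\<\>,\ij}$ and $\eps_{\bla\<\>,\ji}$ likewise differ for $\la_i=\la_j$ yet agree as scalars on weight $\bla$.
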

\begin{proof}
The connection \;\;$\nabla_{\<\bla\<\>,\qq,\kp,\<\>i}\<$ is gauge
equivalent to connection \Ref{dyneq},
\vvn.4>
\bea
\label{gauge}
\nabla_{\<\bla\<\>,\qq,\kp,\<\>i}=\,(\Upsilon_\bla)^{-1}\;\nabla_{\qq,\kp,i}\;\Upsilon_\bla\,,
\qquad \Upsilon_\bla\>=\prod_{1\le i<j\le n}\!(1-\kk_j/\kk_i)
^{\<\>h\>\eps_{\bla\<\>,\ij}/\<\kp}\,,
\vv.2>
\eea
where \;$\eps_{\bla\<\>,\ij}=\>e_{\jj}$ \,for \,$\la_i\ge\la_j$\>,
\,and \;$\eps_{\bla\<\>,\ij}=\>e_{\ii}$ \,for \,$\la_i<\la_j$\>.
\end{proof}

Connection \Ref{dyneq} was introduced in \cite{TV2}, see also Appendix B in \cite{MTV1}, and
is called the {\it trigonometric dynamical connection\/}. Later the definition was extended
from $\frak{sl}_N$ to other simple Lie algebras in \cite{TL} under the name
of the trigonometric Casimir connection.

The trigonometric dynamical connection is
defined over \;$\C^N$ with coordinates \,$\kkk$, it has singularities
at the union of the diagonals \,$\kk_i=\kk_j$. In the case of a tensor product
of evaluation \,$\ty$-modules, the trigonometric dynamical connection commutes
with the associated \qKZ/ difference connection, see \cite{TV2}. Under
the \,$(\gln,\>\gl_{\>n})$ \,duality, the trigonometric dynamical connection
and the associated \qKZ/ difference connection are respectively identified
with the trigonometric \KZ/ connection and the dynamical difference connection,
see \cite{TV2}.

\subsection{\qKZ/ difference connection}
\label{sec qkz}
Recall the $\ty$-action $\pho$ on $(\C^N)^{\otimes n}\otimes\C[\zz;h]$ introduced in
Section \ref{sec yang act C^N}. \,Let
\vvn.1>
\be
R^{(\ij)}(u)\,=\,\frac{u-h\<\>P^{(\ij)}}{u-h}\;,\qquad
i,j=1\lc n\,,\quad i\ne j\,.\kern-3em
\vv.3>
\ee
For $\kp\in\C^\times$, define operators \,$\Ko_1\lc\Ko_n\in\End( (\C^N)^{\otimes n})\ox\C[\zz;h]$\>,
\vvn.4>
\begin{align*}
\Ko_i(\qq;\kp)\>&{}=\,
R^{(\ii-1)}(z_i\<-z_{i-1})\,\dots\,R^{(i,1)}(z_i\<-z_1)\,\times{}
\\[2pt]
& {}\>\times\<\;\kk_1^{e_{1,1}^{(i)}}\!\dots\,\kk_N^{e_{N,N}^{(i)}}\,
R^{(i,n)}(z_i\<-z_n\<-\kp\<\>)\,\dots\,R^{(\ii+1)}(z_i\<-z_{i+1}\<-\kp\<\>)\,.
\end{align*}
Consider the difference operators \,$\Kh_{\qq,\kp,1}\lc\Kh_{\qq,\kp,n}$
acting on \,$ (\C^N)^{\otimes n}\<$-valued
\vvn.4>
functions of \>$\zz,\qq,h$\>,
\bea
\label{K_i}
\Kh_{\qq,\kp,i}\>f(\zzz,h)\,=\,K_i(\qq;\kp)\,f(\zzip)\,.
\vv.3>
\eea

\begin{thm}[\cite{FR}]
\label{thmfr}
The operators \;$\Kh_{\qq,\kp,1}\lc\Kh_{\qq,\kp,n}$ pairwise commute.
\end{thm}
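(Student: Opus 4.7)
\medskip\noindent
\textbf{Proof plan.} Since $\Kh_{\qq,\kp,i} = K_i(\zz;\qq;\kp) \circ T_i$, where $T_i$ is the shift operator $z_i\mapsto z_i-\kp$, and the shifts $T_i, T_j$ commute for $i\ne j$, the theorem reduces to the operator identity
\[
K_i(\zz;\qq;\kp)\,K_j(T_i\zz;\qq;\kp)\;=\;K_j(\zz;\qq;\kp)\,K_i(T_j\zz;\qq;\kp)
\]
in $\End((\C^N)^{\otimes n})\ox\C(\zz;h)$ for all $i<j$. The plan is to rewrite each $K_k$ as
\[
K_k(\zz;\qq;\kp)\;=\;L_k^{<}(\zz)\cdot D_k(\qq)\cdot L_k^{>}(\zz;\kp),
\]
where $L_k^{<}$ collects the R-matrices $R^{(k,a)}(z_k-z_a)$ for $a<k$, $L_k^{>}$ collects the R-matrices $R^{(k,a)}(z_k-z_a-\kp)$ for $a>k$, and $D_k(\qq)=\prod_a\kk_a^{e_{a,a}^{(k)}}$ is the diagonal weight operator, and then to equate the two sides by transporting factors through one another.

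The three tools to use are: (i) the Yang--Baxter equation $R^{(ab)}(u-v)R^{(ac)}(u)R^{(bc)}(v)=R^{(bc)}(v)R^{(ac)}(u)R^{(ab)}(u-v)$; (ii) the unitarity identity $R^{(ab)}(u)R^{(ab)}(-u)=\mathrm{Id}$, which follows from the explicit formula $R(u)=(u\mathrm{Id}-hP)/(u-h)$ together with $P^2=\mathrm{Id}$; and (iii) weight preservation, which states that $D_k(\qq)$ commutes with any $R^{(ab)}(u)$ for which $a,b\ne k$, and that the combined operator $D_i(\qq)D_j(\qq)$ commutes with $R^{(ij)}(u)$. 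The central cancellation is that in $K_i(\zz)K_j(T_i\zz)$ the factor $R^{(j,i)}(z_j-z_i+\kp)$ appears inside $L_j^{<}(T_i\zz)$, whereas in $K_j(\zz)K_i(T_j\zz)$ the corresponding factor $R^{(i,j)}(z_i-z_j)$ appears inside $L_i^{>}(T_j\zz)$; after using Yang--Baxter repeatedly to bring the two products into a common normal form (weight operators rightmost, R-matrices in a fixed lexicographic order), the paired R-matrices match directly or cancel by unitarity.

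The principal obstacle is combinatorial: each side involves roughly $2(n-1)$ R-matrices together with two weight operators, and a rigorous calculation requires specifying an explicit braid-like sequence of Yang--Baxter and weight-commutation moves equating the two normal forms. A conceptually cleaner route, which would be preferable in a final write-up, is to interpret both sides as matrix elements of the monodromy operator $L(u)$ from \Ref{Lpm}: the products of R-matrices in each $K_k$ appear as a twisted transfer-matrix-type expression built from $L$ in a $k$-th auxiliary copy of $\C^N$, and the RTT relations for $L(u)$, together with the observation that $D_k(\qq)$ is merely a diagonal twist, imply commutativity directly. In this reformulation the claim becomes an instance of the general commutativity of twisted transfer matrices in a Yangian, recovering Frenkel and Reshetikhin's original argument while sidestepping the explicit R-matrix gymnastics.
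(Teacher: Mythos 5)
The paper contains no proof of this statement: it is imported verbatim from \cite{FR} (Theorem~5.1 there), so your attempt must be measured against Frenkel--Reshetikhin's original argument. Your reduction of commutativity of the $\Kh_{\qq,\kp,i}$ to the identity $K_i(\zz)\,K_j(T_i\zz)=K_j(\zz)\,K_i(T_j\zz)$ is correct, and your toolkit --- Yang--Baxter, the unitarity $R^{(i,j)}(u)\>R^{(i,j)}(-u)=\on{Id}$ (which does follow from $R(u)=(u\>\on{Id}-hP)/(u-h)$ and $P^2=\on{Id}$), and the twist relations $[D_k,R^{(a,b)}(u)]=0$ for $a,b\ne k$ together with $[D_iD_j,R^{(i,j)}(u)]=0$ --- is exactly the set of ingredients in \cite{FR}. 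But what you have is a plan, not a proof: the braid sequence of Yang--Baxter and twist moves that you defer is the entire mathematical content, and the one concrete cancellation you name is misstated. The factor $R^{(j,i)}(z_j-z_i+\kp)$ from $L_j^{<}(T_i\zz)$ cannot pair with $R^{(i,j)}(z_i-z_j)$ from $L_i^{>}(T_j\zz)$: their arguments are neither equal nor opposite. The unitarity cancellations are internal to each side --- on the left, $R^{(i,j)}(z_i-z_j-\kp)$ from $L_i^{>}(\zz)$ cancels $R^{(j,i)}(z_j-z_i+\kp)$ from $L_j^{<}(T_i\zz)$ after YBE conjugation past the intervening factors, and on the right $R^{(j,i)}(z_j-z_i)$ from $L_j^{<}(\zz)$ cancels $R^{(i,j)}(z_i-z_j)$ from $L_i^{>}(T_j\zz)$ --- after which the remaining factors are matched by further YBE and twist moves.

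The more serious defect is your ``conceptually cleaner route.'' The operators $K_i$ involve no trace over an auxiliary space, so ``commutativity of twisted transfer matrices'' does not apply to them: for untraced monodromies the RTT relation yields braiding, $R\,T_1(u)\,T_2(v)=T_2(v)\,T_1(u)\,R$, not commutation, and the diagonal twist does not commute with the individual factors $R^{(i,a)}(u)$. What is true is the specialization $K_i(\qq;\kp=0)=t(z_i)$, where $t(u)=\tr_0\bigl(D_0\,R^{(0,n)}(u-z_n)\cdots R^{(0,1)}(u-z_1)\bigr)$ is the twisted transfer matrix: since $R(0)=P$, the factor $R^{(0,i)}(0)=P^{(0,i)}$ collapses the trace and reproduces $K_i$ with $\kp=0$. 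Hence the transfer-matrix argument, using $(D\ox D)\,R=R\,(D\ox D)$ and $[t(u),t(v)]=0$, proves the theorem only at $\kp=0$. For $\kp\ne0$ the shift operators enter essentially --- the statement is the flatness of the \qKZ/ difference connection, not the commutativity of a one-parameter family of transfer matrices --- and the unitarity step in the direct computation is precisely what absorbs the shifted arguments. So your proposed shortcut does not recover the Frenkel--Reshetikhin proof; their proof \emph{is} the direct R-matrix computation you hoped to sidestep, and to complete your write-up you would need to carry it out.
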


\begin{thm}[\cite{TV2}]
\label{thm qkz}
The operators \;$\Kh_{\qq,\kp,1}\lc\Kh_{\qq,\kp,n}$,
\,$\pho(\nabla_{\<\bla\<\>,\qq,\kp,\<\>i})\lc\pho(\nabla_{\<\bla\<\>,\qq,\kp,\<\>i})$ \,pairwise commute.
\end{thm}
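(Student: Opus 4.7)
Since the pairwise commutativity among the difference operators $\Kh_{\qq,\kp,i}$ is Theorem \ref{thmfr}, and the pairwise commutativity among the differential operators $\pho(\nabla_{\bla,\qq,\kp,j})$ is Lemma \ref{flat+-} transported through $\pho$, the substantive content of Theorem \ref{thm qkz} is the cross commutativity $[\Kh_{\qq,\kp,i},\pho(\nabla_{\bla,\qq,\kp,j})]=0$ for all $i=1\lc n$ and $j=1\lc N$. My plan is to first reduce to the master connection $\nabla_{\qq,\kp,j}=\kp\ddk_j-\Xk_j$ by means of the gauge equivalence (\ref{gauge}): the factor $\Upsilon_\bla$ is diagonal in the $\gln$-weight decomposition and commutes with the $z$-shifts inside $\Kh_{\qq,\kp,i}$ up to explicit scalar ratios of the form $\prod(1-\kk_l/\kk_k)^{h(\cdot)/\kp}$ that can be absorbed into a conjugation-covariant version of the commutator, so it suffices to verify $[\Kh_{\qq,\kp,i},\kp\ddk_j-\pho(\Xk_j)]=0$.

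To handle the reduced commutator, I would expand $\pho(\Xk_j)$ using definition (\ref{X}) together with the Lax matrix $L(u)$ of (\ref{Lpm}). The generator $T_{jj}\+1$ acts as $\sum_k e_{jj}^{(k)}$, while $T_{jj}\+2$ is the $u^{-2}$-coefficient in the expansion of $\pho(T_{jj}(-u/h))$ and is an explicit quadratic expression in the $e_{ab}^{(k)}$ and the $z_a$. The commutator $[\Kh_{\qq,\kp,i},\pho(\Xk_j)]$ then splits into three pieces. First, commuting $\pho(\Xk_j)$ past the Cartan block $\kk_1^{e_{11}^{(i)}}\cdots\kk_N^{e_{NN}^{(i)}}$ inside $K_i(\qq;\kp)$ produces exactly $\kp\ddk_j K_i(\qq;\kp)$, since the rational coefficients $\kk_l/(\kk_j-\kk_l)$ in $\Xk_j$ are tuned to match the logarithmic derivative of $\kk_j^{e_{jj}^{(i)}}$. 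Second, commuting past the $R$-matrix factors in $K_i(\qq;\kp)$ is controlled by the Yangian RLL relation and by the invariance of each $R^{(ab)}(u)$ under the diagonal $\gln$-action. Third, the shift $z_i\to z_i-\kp$ in the last block of $R$-matrices contributes a $\kp$-dependent piece that cancels the residual $\kp$-dependence accumulated from the first step.

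The hardest part is tracking the quadratic generator $T_{jj}\+2$ through the ordered product of $R$-matrices in $K_i(\qq;\kp)$. Unlike $T_{jj}\+1$, which lies in $U(\gln)$ and commutes with every $R^{(ab)}(u)$ up to a simple reshuffling, the coefficient $T_{jj}\+2$ passes through an $R^{(ab)}(u)$ only modulo correction terms that mix the tensor factors; these corrections must organize themselves precisely into the $\kp$-shift $z_i\to z_i-\kp$ and the derivative $\kp\ddk_j$ predicted by the other two contributions. This reassembly is the algebraic core of the compatibility of the trigonometric dynamical and \qKZ/ connections established in \cite{TV2}, where the computation is carried out for arbitrary tensor products of evaluation Yangian modules; for the present statement I would specialize that argument to the evaluation module $(\C^N)^{\otimes n}$ with parameters $\zz$.
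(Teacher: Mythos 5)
The paper offers no proof of Theorem \ref{thm qkz} beyond the citation to \cite{TV2}, and your proposal ultimately does the same — it correctly isolates the cross commutators as the only substantive content (the two "diagonal" commutativities being Theorem \ref{thmfr} and Lemma \ref{flat+-}) and defers the core computation to \cite{TV2} specialized to the evaluation module $(\C^N)^{\otimes n}\otimes\C[\zz;h]$ — so the approaches coincide. One simplification: your gauge reduction is cleaner than you state, since $\Upsilon_\bla$ in \Ref{gauge} is $z$-independent and preserves total $\gln$-weight, hence commutes exactly with each $\Kh_{\qq,\kp,i}$ (every $R$-matrix factor and the Cartan block in $\Ko_i(\qq;\kp)$ are weight-preserving), so no "scalar ratios" need absorbing into the commutator.
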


The commuting difference operators $\Kh_{\qq,\kp,1}\lc\Kh_{\qq,\kp,n}$ define the {\it rational \qKZ/
difference connection}.
Theorem \ref{thm qkz} says that the rational \qKZ/ difference connection
commutes with the trigonometric dynamical connection.

\subsection{Dynamical Hamiltonians $X^q_{\bla,i}$ on $H^*_T(T^*\F_\bla)$} Recall the $\ty$-module structure
$\rho$ defined on $H^*_T(\XX_n) = \oplus_{|\bla|=n}H^*_T(T^*\F_\bla)$ in Section \ref{sec cohom and Yang}.
For any $\bs\mu=(\mu_1,\dots,\mu_N)\in \Z^N_{\geq 0}$, $|\bs\mu|=n$, the
action of the dynamical Hamiltonians $X^q_{\bs\mu,i}$ preserve each of $H^*_T(T^*\F_\bla)$.

\begin{lem}
For any $\bla$ and $i=1,\dots,n$, the restriction of $X^q_{\bla,i}$ to
$H^*_T(T^*\F_\bla)$ has the form:
\bean
\label{dyn bla}
&&
\phantom{aaa}
\\
\rho(X^{q}_{\bla,i})
&=&
(\gamma_{i,1}+\dots+\ga_{i,\la_i})
-\>h\>\sum_{j=1}^{i-1}\,\frac{\kk_i}{\kk_i-\kk_j}\,\rho(G_{\bla\<\>,\<\>\ij})
\>-\>h\!\sum_{j=i+1}^n\frac{\kk_j}{\kk_i-\kk_j}\,\rho(G_{\bla\<\>,\<\>\ij}) =
\notag
\\
&=& (\gamma_{i,1}+\dots+\ga_{i,\la_i})
-\>h\>\sum_{j=1}^{i-1}\,\frac{\kk_i}{\kk_i-\kk_j}\,\rho(e_{j,i}e_{i,j})
\>-\>h\!\sum_{j=i+1}^n\frac{\kk_j}{\kk_i-\kk_j}\,\rho(e_{i,j}e_{j,i})\, + C,
\notag
\eean
where $(\gamma_{i,1}+\dots+\ga_{i,\la_i})$ denotes the operator of multiplication by the cohomology class
$\gamma_{i,1}+\dots+\ga_{i,\la_i}$, the operator
$C$ is a scalar operator on $H^*_T(T^*\F_\bla)$, and for any $i\ne j$ the element
$\rho(G_{\bla\<\>,\<\>\ij})$ annihilates the identity element $1_\bla\in H^*_T(T^*\F_\bla)$.

\end{lem}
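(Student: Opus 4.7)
My plan is to establish the two displayed equalities and the annihilation claim in sequence. The first equality is immediate: applying the homomorphism $\rho$ to the definition \Ref{Xkm} of $X^q_{\bla,i}$ and invoking formula \Ref{rhoX inf}, which identifies $\rho(\Xin_i)$ on $H^*_T(T^*\F_\bla)$ with multiplication by $\gamma_{i,1}+\cdots+\gamma_{i,\la_i}$, reproduces the claimed expression with the $\rho(G_{\bla,ij})$ terms intact.

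For the second equality, the obstruction is that $G_{\bla,ij}$ is defined with an order depending on the sign of $\la_i-\la_j$, while the target expression uses an order depending only on whether $j<i$ or $j>i$. On pairs where the two prescriptions disagree I would substitute using the $\gl_N$ relation $e_{ij}e_{ji}-e_{ji}e_{ij}=e_{ii}-e_{jj}$. Since $H^*_T(T^*\F_\bla)$ is the $\bla$-weight subspace, $\rho(e_{kk})$ acts there as the scalar $\la_k$, so each such commutator contributes a scalar operator equal to $\la_i-\la_j$; summing these scalar contributions with their respective coefficients $-h\kk_i/(\kk_i-\kk_j)$ or $-h\kk_j/(\kk_i-\kk_j)$ defines the scalar operator $C$ on $H^*_T(T^*\F_\bla)$.

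The third assertion, and the main obstacle, is the annihilation $\rho(G_{\bla,ij})\cdot 1_\bla=0$. The key observation is that $G_{\bla,ij}$ is defined with precisely the ordering that puts the annihilator of $1_\bla$ on the right: when $\la_i\ge\la_j$ it suffices to show $\rho(e_{ij})\cdot 1_\bla=0$, and when $\la_i<\la_j$ it suffices to show $\rho(e_{ji})\cdot 1_\bla=0$. For adjacent indices $j=i\pm 1$ this is a direct computation from \Ref{rho E} or \Ref{rho F} applied to $1$: extracting the coefficient of $u^{-1}$ in the resulting series produces a Lagrange-interpolation sum of the form $\sum_k P(\gamma_{i,k})/\prod_{\ell\ne k}(\gamma_{i,k}-\gamma_{i,\ell})$, which by the classical identity vanishes whenever $\deg P$ is strictly less than the number of nodes minus one---a condition equivalent on source weights to the required dominance inequality. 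For non-adjacent pairs $|i-j|>1$ I would proceed by induction using the identity $e_{ij}=e_{ik}e_{kj}-e_{kj}e_{ik}$ for a suitable intermediate $k$, choosing $k$ so that in the resulting expansion each composition either reduces to an adjacent Lagrange vanishing in an intermediate weight space or collapses because the inner operator takes $1_\bla$ into an empty weight space. A cleaner alternative is to transport the question across the Yangian-module homomorphism $\nu$ of Lemma \ref{lem nu stab = 1} and verify the annihilation of $\nu(1_\bla)$ by $\pho(G_{\bla,ij})$ directly on the tensor-product side via the explicit Lax matrix $L(u)$ of \Ref{Lpm}.
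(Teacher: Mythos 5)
Your first two steps coincide in substance with the paper's proof: the first equality is read off from \Ref{rhoX inf} and the definition \Ref{Xkm}, and the second holds because $e_{j,i}e_{i,j}-G_{\bla,i,j}$ and $e_{i,j}e_{j,i}-G_{\bla,i,j}$ lie in the Cartan subalgebra and hence act on the weight space $H^*_T(T^*\F_\bla)$ by scalars (your explicit value $\pm(\la_i-\la_j)$ is consistent with this). Your adjacent-case computation of the annihilation is also correct: putting $f=1$ in \Ref{rho E} or \Ref{rho F} and extracting the coefficient of $u^{-1}$ yields a Lagrange interpolation sum whose degree count gives vanishing exactly under the dominance inequality built into the definition of $G_{\bla,i,j}$ — a nice direct verification that the paper does not carry out.

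The genuine gap is in your non-adjacent induction, which is the main content of the annihilation claim. Writing $e_{i,j}=[e_{i,k},e_{k,j}]$ and applying to $1_\bla$, at most one of the two resulting terms is killed by an adjacent vanishing on $1_\bla$; the surviving term applies an adjacent operator to a class that is \emph{not} the identity of its weight space. Concretely, take $N=3$, the pair $(1,3)$, and $\la_2>\la_1\ge\la_3$, e.g.\ $\bla=(1,2,1)$: then $\rho(e_{2,3})1_\bla=0$ kills one term, but $g=\rho(e_{1,2})1_\bla\ne0$ in general, and you must show $\rho(e_{2,3})\,g=0$. For $f\ne 1$ the factor $f(\GG^{\ipi})$ in \Ref{rho E} varies with the summation index and raises the numerator degree, so your Lagrange criterion no longer applies; and the target weight $(2,2,0)$ is a nonempty weight space, so your ``collapse into an empty weight space'' escape is unavailable except for degenerate $\bla$. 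Indeed it is not even clear a priori that $\rho(e_{i,j})1_\bla=0$ holds for non-adjacent pairs with $\la_i\ge\la_j$ — the lemma only asserts vanishing of the ordered product $G_{\bla,i,j}\,$. Your fallback — transporting the question through $\nu$ using Theorem \ref{thm stab yang} — is in fact exactly the paper's proof; but there the tensor-side annihilation of $\nu(1_\bla)=\sum_{I\in\Il}\xi_I/R(\zz_I)$ by $\pho(G_{\bla,i,j})$ is not a quick check against $L(u)$ of \Ref{Lpm}: the paper outsources it to Lemma 2.20 of \cite{GRTV} (see also \cite{RTVZ}), a nontrivial result you would have to reprove. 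So as written, the only non-formal part of the lemma is not established by your argument.
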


\begin{proof} The first equality in \Ref{dyn bla} follows from \Ref{rhoX inf}.
The operator $C$ is scalar since
$e_{j,i}e_{i,j}-G_{\bla\<\>,\<\>\ij}$ and
$e_{i,j}e_{j,i}-G_{\bla\<\>,\<\>\ij}$ lie in the Cartan subalgebra and act on
$H^*_T(T^*\F_\bla)$ as scalars. By Theorem \ref{thm stab yang}, in order to show that
$\rho(G_{\bla\<\>,\<\>\ij})$ annihilates the identity element $1_\bla$ it is enough to show that
$\pho(G_{\bla\<\>,\<\>\ij})$ annihilates the element $\nu(1_\bla) = \sum_{I\in\Il}\frac 1{R(\zz_I)}\xi_I$
and that is the statement of \cite[Lemma 2.20]{GRTV}, see also \cite{RTVZ}.
\end{proof}

\subsection{Quantum multiplication by divisors on $H^*_T(T^*\F_\bla)$} In \cite{MO}, the quantum multiplication
by divisors on $H^*_T(T^*\F_\bla)$ is described. The fundamental equivariant cohomology classes of divisors
on $T^*\F_\bla$ are linear combinations of $D_{i}=\gamma_{i,1}+\dots+\ga_{i,\la_i}$, $i=1,\dots,N$.
The quantum multiplication $D_{i}*_{\tilde\qq}$ depends on parameters $\tilde \qq=(\tilde q_1,\dots,\tilde q_N)$.

\begin{thm}[Theorem 10.2.1 in \cite{MO}]
\label{MO main} For $i=1,\dots,N$, the quantum multiplication by $D_i$ is given by the formula:
\bean
\label{q mult}
&&
\phantom{aaa}
\\
D_i*_{\tilde\qq}
&=& (\gamma_{i,1}+\dots+\ga_{i,\la_i})
+\>h\>\sum_{j=1}^{i-1}\,\frac{\tilde q_j/\tilde q_i}{1-\tilde q_j/\tilde q_i}\,\rho(e_{j,i}e_{i,j})
\>-\>h\!\sum_{j=i+1}^n\frac{\tilde q_i/\tilde q_j}{1-\tilde q_i/\tilde q_J}\,\rho(e_{i,j}e_{j,i})\, + C,
\notag
\eean
where $C$ is a scalar operator on $H^*_T(T^*\F_\bla)$ fixed by the requirement that
the purely quantum
part of $D_i*_{\tilde\qq}$ annihilates the identity $1_\bla$.

\end{thm}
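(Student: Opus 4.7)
The statement is \cite[Theorem 10.2.1]{MO} specialized to $T^*\F_\bla$ and can simply be cited; I sketch a self-contained route using the machinery already assembled in this paper.

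The plan is to derive \eqref{q mult} by combining two ingredients. First, the MO principle from \cite[Section 8]{MO}: on the equivariant cohomology of any Nakajima variety, quantum multiplication by a divisor equals the action of a distinguished element of the associated Yangian --- up to a scalar, and under identification of Kaehler parameters $\tilde q$ with dynamical parameters $q$, the corresponding dynamical Hamiltonian $X^q_{\bla,i}$. Second, the explicit action formula \eqref{dyn bla} computing $\rho(X^q_{\bla,i})$ in the GRTV realization. These two ingredients combine because Corollary \ref{Y MO} identifies the MO Yangian action with the GRTV action $\rho$. With both in hand, the derivation is algebra: under the identification $\tilde q_j/\tilde q_i = q_i/q_j$ (i.e.\ $\tilde q_i\propto q_i^{-1}$), the elementary identities
\[
-h\,\frac{q_i}{q_i-q_j}\,=\, h\,\frac{\tilde q_j/\tilde q_i}{1-\tilde q_j/\tilde q_i}\quad (j<i),\qquad
-h\,\frac{q_j}{q_i-q_j}\,=\, -h\,\frac{\tilde q_i/\tilde q_j}{1-\tilde q_i/\tilde q_j}\quad (j>i),
\]
transform \eqref{dyn bla} into the shape of \eqref{q mult}. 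The residual additive scalar is pinned down by matching normalizations: the lemma preceding \eqref{dyn bla} shows $\rho(G_{\bla,\ij})\,1_\bla=0$, so the quantum portion of $\rho(X^q_{\bla,i})$ annihilates $1_\bla$, which is precisely the MO normalization of $C$ in \eqref{q mult}.

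The main obstacle is the first ingredient: the MO identification of quantum multiplication by divisors with a specific Yangian element. In \cite{MO}, this emerges from a joint analysis of the quantum connection and the shift-operator difference connection, from their flatness and compatibility, and from the fact that both are controlled by the same R-matrix; the divisor-to-dynamical-Hamiltonian correspondence is then forced by the commutation structure. An alternative direct route for $T^*\F_\bla$ is $T$-equivariant localization on moduli of stable maps: the holomorphic symplectic structure confines contributions to rigid lines $\PP^1\subset\F_\bla$ joining pairs of fixed points related by transpositions, each contributing a geometric series $\sum_{k\geq 1}(\tilde q_j/\tilde q_i)^k$ whose matrix coefficients assemble into $\rho(e_{j,i}e_{i,j})$. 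Either route is substantial; the algebra above is elementary.
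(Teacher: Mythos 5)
The paper offers no proof of this statement at all: it is imported as Theorem 10.2.1 of \cite{MO} (restated via the identification of Corollary \ref{Y MO}), exactly as your opening sentence proposes, so your treatment matches the paper's. One remark on logical order: in the paper the implication runs the other way from your sketch --- Theorem \ref{MO main} combined with the explicit formula \Ref{dyn bla} \emph{yields} the divisor-equals-dynamical-Hamiltonian statement (Corollary \ref{cor qm = dh}), so taking that correspondence as your ``first ingredient'' would presuppose the conclusion; your honest flagging of this as the main obstacle, and your deferral to \cite{MO} for it, keeps the proposal sound.
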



\begin{cor}
\label{cor qm = dh}
For $i=1,\dots,N$, the operator $D_i*_{\tilde\qq}$
of quantum multiplication by $D_i$ on $H^*_T(T^*\F_\bla)$
equals the action $\rho(X^{q}_{\bla,i})$ on $H^*_T(T^*\F_\bla)$
of the dynamical Hamiltonian $X^{q}_{\bla,i}$ if we put
$(q_1,\dots,q_N)$ $ = (\tilde q_1^{\,-1},\dots, \tilde q_N^{\,-1})$.

\end{cor}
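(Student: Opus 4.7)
The plan is to deduce the corollary by a direct term-by-term comparison of the two formulas for the action in question, namely equation \Ref{q mult} from Theorem \ref{MO main} (giving $D_i*_{\tilde\qq}$) and equation \Ref{dyn bla} (giving $\rho(X^q_{\bla,i})$). Both formulas have the shape: a multiplication by the divisor class $\ga_{i,1}\lsym+\ga_{i,\la_i}$, plus a sum over $j\ne i$ of rational coefficients in the dynamical parameters times certain two-sided products of $e_{\ij}$'s, plus a scalar correction $C$. So the content of the corollary splits cleanly into (a) matching the rational coefficients in $\qq$ versus $\tilde\qq$, and (b) matching the scalar normalizations.

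For step (a), under the substitution $\kk_j = \tilde q_j^{\,-1}$ I would simply compute
\be
-\>\frac{\kk_i}{\kk_i-\kk_j}\,=\,-\>\frac{1/\tilde q_i}{1/\tilde q_i-1/\tilde q_j}\,=\,\frac{\tilde q_j/\tilde q_i}{1-\tilde q_j/\tilde q_i}\>,\qquad
-\>\frac{\kk_j}{\kk_i-\kk_j}\,=\,-\>\frac{\tilde q_i/\tilde q_j}{1-\tilde q_i/\tilde q_j}\>.
\ee
Inserting these identities into \Ref{dyn bla} turns the first sum into the $+h\sum_{j<i}(\tilde q_j/\tilde q_i)/(1-\tilde q_j/\tilde q_i)\,\rho(e_{j,i}e_{i,j})$ of \Ref{q mult}, and the second sum into the $-h\sum_{j>i}(\tilde q_i/\tilde q_j)/(1-\tilde q_i/\tilde q_j)\,\rho(e_{i,j}e_{j,i})$ of \Ref{q mult}. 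So the non-scalar operator parts of the two expressions agree after the substitution.

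For step (b), I would observe that both scalar terms are characterized by the same normalization. In Theorem \ref{MO main}, $C$ is defined by requiring that the purely quantum part of $D_i*_{\tilde\qq}$ kills the identity class $1_\bla$. On the other side, the preceding lemma shows that after rewriting \Ref{dyn bla} in terms of $G_{\bla,\ij}=e_{\ji}\>e_{\ij}$ for $\la_i\ge\la_j$ and $G_{\bla,\ij}=e_{\ij}\>e_{\ji}$ for $\la_i<\la_j$ (i.e., the operators that differ from $e_{j,i}e_{i,j}$ or $e_{i,j}e_{j,i}$ only by a Cartan element, hence by a scalar on $H^*_T(T^*\F_\bla)$), the operator $\rho(G_{\bla,\ij})$ annihilates $1_\bla$. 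Thus the scalar $C$ in \Ref{dyn bla} is precisely the one that makes the sum of the $\rho(e_{\ij}e_{\ji})$ and $\rho(e_{\ji}e_{\ij})$ terms kill $1_\bla$, which is the same normalization as in \Ref{q mult}.

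The potentially delicate step, and really the only non-routine one, is step (b): one must make sure the scalar corrections on both sides are fixed by the same condition, and that the ``purely quantum part'' of MO's formula indeed corresponds to the part containing $\rho(G_{\bla,\ij})$. This is exactly the content of the previous lemma combined with the statement of Theorem \ref{MO main}. Once that identification is made, no further calculation is needed, and the corollary follows by combining steps (a) and (b).
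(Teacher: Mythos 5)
Your proposal is correct and matches the paper's (implicit) reasoning: the corollary is stated without a separate proof precisely because it follows from comparing formula \Ref{dyn bla} with formula \Ref{q mult} term by term under $q_j=\tilde q_j^{\,-1}$, with the scalar $C$ pinned down on both sides by the same normalization, since the preceding lemma shows each $\rho(G_{\bla\<\>,\<\>\ij})$ annihilates $1_\bla$, so the purely quantum parts of both operators kill the identity and a scalar operator killing $1_\bla$ is zero. Your coefficient identities and the identification of the purely quantum part with the $G$-terms are exactly the intended argument.
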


The quantum connection $\nabla_{\on{quant},\bla,\tilde\qq,\kp}$ on $H^*_T(T^*\F_\bla)$ is defined by the formula
\bean
\label{q conn}
\nabla_{\on{quant},\bla,\tilde \qq,\kp, i}\,=\,\kp\,\tilde q_i\frac{\der}{\der \tilde q_i}\>-\>D_i*_{\tilde\qq}\,,\qquad i=1\lc N,
\vv-.1>
\eean
where $\kp\in\C^\times$ is a parameter of the connection, see \cite{BMO}.
By Corollary \ref{cor qm = dh}, we have
\bean
\label{quanT}
\nabla_{\on{quant},\bla,\tilde \qq,\kp, i} = \rho(\nabla_{\bla,\tilde q_1^{\,-1},\dots, \tilde q_N^{\,-1},-\kp}),
\qquad i=1,\dots,N.
\eean

\smallskip

By Theorem \ref{thm qkz}, the difference operators
\bea
\St_{\on{id}}\circ\Hat\Ko_{\tilde q_1^{\,-1},\dots, \tilde q_N^{\,-1},-\kp,1} \circ \nu,
\quad
\dots,
\quad
\St_{\on{id}}\circ\Hat\Ko_{\tilde q_1^{\,-1},\dots, \tilde q_N^{\,-1},-\kp,n} \circ\nu
\eea
and the differential operators $\nabla_{\on{quant},\bla,\tilde\qq,\kp,1},\dots,\nabla_{\on{quant},\bla,\tilde\qq,\kp,N}$
pairwise commute. The difference operators form
the rational \qKZ/ difference connection on $H^*_T(T^*\F_\bla)$. This difference connection is discussed in \cite{MO}
under the name of the shift operators.

\section{Proofs of lemmas on weight functions}
\label{proofs}

\subsection{Proof of Lemma \ref{lem W_I}}

Parts (ii-iv) of Lemma \ref{lem W_I} are proved by inspection of the definition of weight
functions.

For $N=2$, part (i) of Lemma \ref{lem W_I} is proved in \cite{RTV}, see Lemma 3.6 and Theorem 4.2 in \cite{RTV}.
The general case $N>2$ is proved as follows.

Let $U_{I,\bs\si}(\TT;\zz;h)$ be the term
in the symmetrization in \Ref{hWI-} obtained by permuting the variables $t^{(i)}_a$
by an element $\bs \si\in S_{\la^{(1)}}\times\dots\times S_{\la^{(N-1)}}$. Set
\bea
U_{I,J,\bs \si}(\zz;h) = U_{I,\bs\si}(\zz_J;\zz;h) .
\eea
We show that each term $U_{I,J,\bs \si}(\zz;h)$ is divisible by $c_\bla(\zz_J)$.

Recall that $
\cup_{b=1}^a I_a = I^{(a)} = \{ i^{(a)}_1<\dots<i^{(a)}_{\la^{(a)}} \}.
$
Similarly, let
$
\cup_{b=1}^a J_a = J^{(a)} = \{ j^{(a)}_1<\dots<j^{(a)}_{\la^{(a)}} \}.
$

The substitution $\TT=\zz_J$ implies $t^{(N-1)}_a=z_{j^{(N-1)}_a}$.
Denote by $\si$ the component of $\bs\si$ in the last factor $S_{\la^{(N-1)}}$.
Consider the factor $f_{c,d} = z_{j^{(N-1)}_c} - z_{j^{(N-1)}_d} - h$
in $c_\bla(\zz_J)$ for $c\ne d$.

Let $a=\si^{-1}(c)$. If $j^{(N-1)}_d < i^{(N-1)}_a$, then $f_{c,d}$ divides
$U_{I,J,\bs\si}$ due to the factor $t^{(N-1)}_{\si(a)} - z_{j^{(N-1)}_d} - h$
in $U_{I,\bs\si}$.

Let $b=\si^{-1}(d)$. If $i^{(N-1)}_a \le j^{(N-1)}_{\si(b)} = j^{(N-1)}_d \le i^{(N-1)}_b$,
then $a<b$, because $i^{(N-1)}_a \le i^{(N-1)}_b$ and $a\ne b$.
Then $f_{c,d}$ divides $U_{I,J,\bs\si}$ due to the factor
$t^{(N-1)}_{\si(a)} - t^{(N-1)}_{\si(b)} - h$ in $U_{I,\bs\si}$.

If $i^{(N-1)}_b < j^{(N-1)}_d$, then $U_{I,J,\bs\si}=0$ due to the factor
$t^{(N-1)}_{\si(a)} - z_{j^{(N-1)}_d} $ in $U_{I,\bs\si}$.

Once the substitution $t^{(N-1)}_a=z_{j^{(N-1)}_a}$ is done,
the consideration of the factors
$
z_{j^{(N-2)}_c} - z_{j^{(N-2)}_d} - h
$
in $c_\bla(\zz_J)$ is similar.

\subsection{Proof of Lemma \ref{lem W si W}}

It is enough to prove that for $I\in \Il$, $i=1,\dots,n-1$, we have
\bean
\label{s W}
W_{s_i,I}(\TT;\zz;h) =
\frac{z_{i}-z_{i+1}}{z_i-z_{i+1}+h} W_{\on{id},I}(\TT;\zz;h) +
\frac h{z_{i}-z_{i+1}+h} W_{\on{id}, s_i(I)}(\TT;\zz;h).
\eean
Moreover, it is straightforward to see from \Ref{hWI-} that it suffices to prove
relations \Ref{s W} for \,$n=2$, $i=1$, and the following two cases for $I$.
The first case is $I=(I_1,\dots,I_N)$, where $I_1=\{1,2\}$, and $I_2,\dots,I_N$ are empty.
The second case is $I=(I_1,\dots,I_N)$, where
$I_1=\{1\}$, $I_2=\{2\}$, and $I_3,\dots,I_N$ are empty.
In each of the two cases, formula \Ref{s W} is proved by straightforward verification.
All other cases of formula \Ref{s W} can be deduced from these two by picking up a suitable subexpression
and an appropriate change of notation.

\subsection{Proof of Lemma \ref{lem:orto}}
In addition to vectors $\xi_I,I\in\Il,$ defined in \Ref{xi}, we introduce the vectors
\bean
\label{xi op}
\xi_{\si_0,I} = \sum_{J\in \Il} \frac{W_{\on{id},J}(\zz_I;\zz;h)}{Q(\zz_I)\,c_\bla(\zz_I)} \,v_J.
\eean
Let $\mc S$ be the $\C(\zz;h)$-bilinear form on $(\C^N)^{\otimes n}\otimes \C(\zz;h)$ such that the basis $(v_J)$
is orthonormal. Then the statement of the lemma is equivalent to the statement
\bea
\mc S(\xi_I,\xi_{\si_0,J}) = \delta_{I,J}\,\frac{R(\zz_I)}{Q(\zz_I)},
\eea
which is the statement of \cite[Theorem 2.18]{GRTV}.

\bigskip

\end{document}